\newtheorem{theorem}{Theorem}[section]
\newtheorem{lemma}[theorem]{Lemma}
\newenvironment{proof}[1][Proof]{\begin{trivlist}
\item[\hskip \labelsep {\bfseries #1}]}{\end{trivlist}}
\newenvironment{definition}[1][Definition]{\begin{trivlist}
\item[\hskip \labelsep {\bfseries #1}]}{\end{trivlist}}
\newenvironment{example}[1][Example]{\begin{trivlist}
\item[\hskip \labelsep {\bfseries #1}]}{\end{trivlist}}
\newenvironment{remark}[1][Remark]{\begin{trivlist}
\item[\hskip \labelsep {\bfseries #1}]}{\end{trivlist}}
\newcommand{\qed}{\nobreak \ifvmode \relax \else
      \ifdim\lastskip<1.5em \hskip-\lastskip
      \hskip1.5em plus0em minus0.5em \fi \nobreak
      \vrule height0.75em width0.5em depth0.25em\fi}
\DeclareSymbolFont{AMSb}{U}{msb}{m}{n}
\DeclareMathSymbol{\N}{\mathbin}{AMSb}{"4E}
\DeclareMathSymbol{\Z}{\mathbin}{AMSb}{"5A}
\DeclareMathSymbol{\R}{\mathbin}{AMSb}{"52}
\DeclareMathSymbol{\Q}{\mathbin}{AMSb}{"51}
\DeclareMathSymbol{\I}{\mathbin}{AMSb}{"49}
\DeclareMathSymbol{\C}{\mathbin}{AMSb}{"43}
\newcommand{\pro}{\mathbb{P}}
\begin{document}

\title{Genus-Zero Mirror Principle For Two Marked Points}
\author{Luke Cherveny}
\date{December 27, 2009}

\maketitle

\begin{abstract}

We study a generalization of Lian-Liu-Yau's notion of Euler data \cite{lly:mirror1} in genus zero and show that certain sequences of multiplicative equivariant characteristic classes on Kontsevich's stable map moduli with markings induce data satisfying the generalization.  In the case of one or two markings, this data is explicitly identified in terms of hypergeometric type classes, constituting a complete extension of Lian-Liu-Yau's mirror principle in genus zero to the case of two marked points and establishing a program for the general case.  We give several applications involving the Euler class of obstruction bundles induced by a concavex bundle on $\pro^n$.  

\end{abstract}

\section{Introduction}

Given $X$ be a smooth projective variety over $\C$, let $\overline M_{g,m}(X,\beta)$ be Kontsevich's moduli space of stable maps to $X$.  Its points are triples $(f;C;z_1,...,z_m)$, representing a holomorphic map $f:C \rightarrow X$ from a genus $g$ curve with smooth markings $z_1,...,z_m \in C$ and such that $f_*[C] = \beta \in H_2(X,\Z)$, modulo the obvious equivalence.  For an excellent introduction to stable maps see \cite{fultonpand:stablemaps}.  The Gromov-Witten theory of $X$ with primary insertions concerns integrals of the form

\begin{equation}
K_{g,d}^X(\gamma_1, ..., \gamma_m) = \int_{[\overline M_{g,m}(X,\beta)]^{\text{vir}}} \prod_{i=1}^m ev_i^*\gamma_i,
\label{gwdef}
\end{equation}

\noindent where integration is over the virtual fundamental class $[\overline M_{g,m}(X,\beta)]^{\text{vir}}$ of Li-Tian \cite{litian:virtualclass} (also Behrend-Fantechi \cite{behrendfantechi:virtualclass}), $ev_i$ is the map 

\[
ev_i: \overline M_{g,m}(X,\beta) \rightarrow X
\]

\noindent given by evaluation of $f$ at the $i$-th marked point, and $\gamma_i \in H^*(X,\Z)$.  As $\overline{M_{g,m}}(X,\beta)$ is a Deligne-Mumford stack, the Gromov-Witten invariants are in general $\Q$-valued.  

When $X$ is Calabi-Yau, the duality between type IIA and IIB string theories postulates that a certain generating series for the Gromov-Witten invariants of $X$ (the A-model partition function) is computable in terms of period integrals on the complex moduli of a mirror space (the B-model), for instance yielding the mirror conjectures of \cite{cdgp} and \cite{bcov} for the quintic.  For a thorough history and overview of mirror symmetry, see either \cite{coxkatz} \cite{mirrorsymmetry}.  We will restrict our attention to the mirror principle program of Bong Lian, Kefeng Liu, and S.-T. Yau which seeks to prove mirror conjectures in a simple and unified framework by exhibiting certain series of equivariant multiplicative characteristic classes on Kontsevich's moduli of stable maps in terms of hypergeometric type classes \cite{lly:mirror1} \cite{lly:mirror2} \cite{lly:mirror3} \cite{lly:mirror4}.

The original paper \cite{lly:mirror1} develops mirror principle in genus zero for $X$ a complete intersection and more generally a local Calabi-Yau space arising from a concavex bundle $V$ over $\pro^n$ (direct sum of a positive and negative bundles).  Such a bundle $V = V^+ \oplus V^-$ naturally induces a sequence of obstruction bundles $V_d$ on $\overline{M_{0,m}}(\pro^n,d)$ whose fiber at $(f;C;z_1,...z_m)$ is given by $H^0(C,f^*V^+) \oplus H^1(C,f^*V^-)$.  The calculation of Gromov-Witten invariants (\ref{gwdef}) for $X$ reduces to enumeration of twisted Euler classes of these obstruction bundles:

\begin{equation}
K_{d}^X(H^{k_1},...,H^{k_m}) = \int_{\overline{M_{0,m}}(\pro^n,d)} \text{Euler}(V_d) \prod_{i=1}^m ev_i^*H^{k_i}
\label{gwdefinition2}
\end{equation}

\noindent Here $H$ is the hyperplane class on $\pro^n$ and we have identified $\beta = d[H]$.  The power of this formulation is that a torus $T$ acting on $\pro^n$ induces a $T$-action on $\overline{M_{0,m}}(\pro^n,d)$, which permits the localization methods of Atiyah-Bott \cite{atiyahbott:mm} to be applied to an equivariant version of (\ref{gwdefinition2}).  We will review the essentials of equivariant cohomology and localization in Section 2, as well as introduce Lian-Liu-Yau's notion of Euler data and give a more precise formulation of their program.  The idea is that one should study sequences of equivariant classes on a large projective space (the so-called linear model) that satisfy certain quadratic relations on fixed points under a torus action.  These relations reflect the geometry of fixed components of a complicated moduli space (the nonlinear model) which collapses onto the linear model.  One instance of Euler data is induced by the concavex bundle $V$ and corresponds to the $A$-model; another instance, corresponding to the $B$-model, is given in terms of linear classes on the linear model.  Their equality after certain mirror transformations is established in \cite{lly:mirror1} in the case $m=0$.  When $m>0$, Lian-Liu-Yau give a general formulation of Euler data but do not explicitly identify the proper linear classes for comparison \cite{lly:mirror4}.

In section $3$ we introduce the notion of $(u,v)$-Euler data, which can be interpreted as a deformation of Lian-Liu-Yau's Euler data (seen to be the $(0,0)$ case).  We show that a concavex bundle on $\pro^n$ indeed induces $(u,v)$-Euler data, and prove a key uniqueness lemma for the specific case of $(0,1)$-Euler data.  Section $4$ gives a natural extension of $(0,0)$-Euler data to $(0,1)$-Euler data and discusses certain invertible maps on $(u,v)$-Euler data known as mirror transformations.  The goal is to prove Theorem \ref{maintheorem}, which gives an extension to $(0,1)$-Euler data with controlled growth in a certain equivariant weight critical to our uniqueness lemma.  In Section $5$ we combine the results of previous sections to prove Theorems \ref{oneptmirrorthrm} and \ref{twoptmirrorthrm}, the one-point and two-point mirror theorems establishing the equality of Euler data induced by $V$ with explicite linear data.  When $V = \mathcal{O}(n+1)$ on $\pro^n$ and working with the equivariant Euler class this agrees with the results of \cite{zinger:genuszero}.  Section $6$ discusses the explicit recovery of Gromov-Witten invariants in general, while Section $7$ gives several concrete examples.

We note that integrality conjectures have been advanced connecting the numbers produced in (\ref{gwdef}) with the enumerative geometry of $X$.  In dimension $3$, the Aspinwall-Morrison formula \cite{aspinwallmorrison:covers} conjectures integer invariants in the case $g=0$, $m=0$ when $X$ is a Calabi-Yau after multiple coverings are properly taken into account.  This was generalized by Gopakumar and Vafa to a full integratlity conjecture in all genera for CY $3$-folds.  New integrality conjectures for Calabi-Yau spaces in genus $1$ for dimensions $4$ and $5$ and an extension of the Aspinwall-Morrison formula to $n$ dimensions have also recently been advanced \cite{klemmpand:4folds} \cite{pandzinger:5folds}.  To be specific, in genus $0$ it is conjectured for Calabi-Yau $n$-folds that the invariants $\eta$ defined via

\begin{equation}
\sum_{\beta > 0} K_d(H^{k_1},...,H^{k_m}) q^d = \sum_{\beta > 0} \eta_d(H^{k_1},...,H^{k_m}) \sum_{d = 1}^{\infty} d^{-3+m} q^{d\beta}
\label{aspinwallmorrison}
\end{equation}

\noindent are integers.  The author has verified the integrality of these invariants in $m=1$ or $m=2$ for a number of different $V$ in low degree by computer.\\

\noindent \textbf{Acknowledgments} The author wishes to thank his advisor, Prof. Kefeng Liu, for his advice and support, as well as Prof. Jun Li for his interest in the project and assistance.

\section{Preliminaries}

\subsection{Equivariant Cohomology and Localization}

We briefly summarize equivariant cohomology for $T$-spaces and the powerful technique of localization.  For a thorough discussion, the standard reference is \cite{atiyahbott:mm}.

Let T be a compact Lie group.  There exists a contractible space $ET$, unique up to homotopy equivalence, on which $T$ acts freely, and with $T$ classified by the principle bundle $ET \rightarrow BT$.  If $X$ is a topological space with $T$-action, we define $X_T = X \times_T ET$, which is a bundle over the classifying space $BT$ with fiber $X$.

\begin{definition} The \emph{equivariant cohomology} of $X$ is defined to be

\[
H^*_T(X) = H^*(X_T),
\]

\noindent where $H^*(X_T)$ is the ordinary cohomology of $X_T$.
\end{definition}

We will always take $T$ to be an algebraic torus $(\C^*)^n$, in which case one may check that $BT \cong (\pro^{\infty})^n$ and $ET = \pi_1^*S \otimes \dots \otimes \pi_n^*S$, where $\pi_i$ is projection of $BT$ to the $i$-th copy of $\pro^{\infty}$ while $S \cong \mathcal{O}(-1)$ is the tautological line bundle on $\pro^{\infty}$.  When $X$ is a point we recover the ordinary cohomology of the classifying space $BT$:

\[
H^*_T(pt) = H^*(BT) = \C[\lambda_1, \dots, \lambda_n],
\]

\noindent where $\lambda_i = c_1(\mathcal{O}(\lambda_i))$ are the weights of the torus action.

Many of the standard concepts in ordinary cohomology translate directly to the equivariant setting, for instance the notions of pullback for $T$-maps and pushforward by a proper $T$-map.  If we denote inclusion of the fiber by  $i_X: X \hookrightarrow X_T$, then in particular the equivariant pullback induces a map $i_X^*: H_T^*(X) \rightarrow H^*(X)$ called the \emph{nonequivariant limit}.  Furthermore, pullback by the contraction map $X \rightarrow pt$ realizes $H^*_T(X)$ as a $H^*(BT)$-module.  The notions of equivariant vector bundle and equivariant characteristic classes are also readily defined.

\subsection{Atiyah-Bott Localization}

If $X$ be a smooth manifold acted on by a torus $T$, the $T$-fixed components of $X$ are known to be a union of smooth submanifolds $\{Z_j\}$ (e.g. \cite{iversen:fixedpoints}).  We will denote inclusion of a fixed component into $X$ by $i_{Z_j}: Z_j \hookrightarrow X$ and let the (equivariant) normal bundle of $Z_j \subseteq X$ be denoted by $N_j$.  A key observation is that the equivariant Euler class $\text{Euler}_T(N_j) \in H_T^*(Z_j)$, which we may pushforward by the Gysin map $i_{Z_j!}: H^*_T(Z_j) \rightarrow H^*_T(X)$, is invertible in $H^*_T(X)\otimes \mathcal{R}_T$ where $\mathcal{R}_T \cong \C(\lambda_1,...,\lambda_n)$ is the field of fractions of $H^*(BT)$.

A fundamental result in the subject is that there is an isomorphism between the equivariant cohomology of $X$ properly localized and that of its fixed components

\[
H_T^*(X)\otimes \mathcal{R}_T \cong \bigoplus_j H_T^*(Z_j)\otimes \mathcal{R}_T,
\]

\noindent given explicitly by the map

\[
\alpha \rightarrow \sum_j \frac{i_{Z_j}^*\alpha}{\text{Euler}(N_{Z_j})}.
\]

\noindent In integrated form, 

\[
\int_{X_T} \alpha = \sum_{Z_j} \int_{(Z_j)_T} \frac{i_{Z_j}^*\alpha}{\text{Euler}(N_{Z_j})}
\]

\noindent for any $\alpha \in H_T^*(X)\otimes\mathcal{R}_T$.

More generally, let $f: X \rightarrow Y$ be a proper equivariant map of $T$-spaces.  If $F \subseteq Y$ is a $T$-fixed component of $Y$, $E \subset f^{-1}(F)$ fixed components of $f^{-1}(F)$, and $f_0 := f|_{E}$, then \emph{functorial localization} \cite{lly:mirror1} says for $\omega \in H_T^*(X)$,

\[
{f_0}_* \left(\frac{i_E^*\omega}{e_T(E/X)} \right) = \frac{i^*_F(f_*\omega)}{e_T(F/Y)}.
\]

Localization techniques hold in the case where the spaces support support virtual fundamental classes.  For instance, see the discussion in Ch. 27 of \cite{mirrorsymmetry}.

\subsection{Lian-Liu-Yau Euler Data}

We now give a brief overview of Lian, Liu, and Yau's mirror principle, in particular focusing on their notion of Euler data.  Ultimately one wishes to establish the equivalence of two sequences of equivariant classes on projective spaces satisfying certain gluing relations on the fixed points.  Such sequences are known as Euler data and are the primary item of interest in mirror principle.  We refer the reader to the original paper for further discussion \cite{lly:mirror1}.

Let $M_d = \overline{M_{0,0}}(\pro^n \times \pro^1, (d,1))$. This space compactifies the space of all unmarked rational maps of degree $d$ to $\pro^n$ and is often called the \emph{nonlinear model} (the map to $\pro^n$ is the image of the the map given by each point in $M_d$).  $M_d$ has a $G = \C^* \times T$-action, where $T=(\C^*)^{n+1}$, induced by the $G$-action 

\begin{align}
(t, t_0, \dots, t_n) \cdot& ([w_0,w_1], [z_0,...,z_n]) = ([t^{-1}w_0,w_1],[t_0^{-1}z_0, \dots, t_n^{-1}z_n])
\label{gaction}
\end{align}

\noindent on $\pro^1 \times \pro^n$.  Furthermore, there is a natural (equivariant) forgetful morphism $\pi$ to $\overline{M_{0,0}}(\pro^n, d)$:

\[
\pi: M_d \rightarrow \overline{M_{0,0}}(\pro^n, d).
\]

Also define $N_d = \pro(H^0(\pro^1, \mathcal{O}(d))^{n+1})$.  Since an unmarked degree $d$ rational map to $\pro^n$ may be given up to scalar multiple as an $(n+1)$-tuple of degree $d$ homogeneous polynomials in two variables with no common factor, $N_d$ constitutes a different compactification of unmarked rational curves to $\pro^n$ of degree $d$, which we call the \emph{linear model}.  It too has a $G$-action, given by

\[
(t, t_0, \dots, t_n) \cdot [\alpha_0(w_0,w_1), \dots, \alpha_n(w_0,w_1)] = [t_0^{-1}\alpha_0(t^{-1}w_0,w_1), \dots, t_n^{-1}\alpha_n(t^{-1}w_0,w_1)]
\]

\noindent where the $\alpha_i$ are degree $d$ homogeneous polynomials in two variables.

The $G$-fixed components of $N_d$ are isolated fixed points, which we denote by $\{p_{i,r}\}$ where $0 \leq i \leq n$ and $0 \leq r \leq d$, representing the point $[0,...,w_1^rw_0^{d-r},...,0]$ where the nonzero term is in the $i$-th slot.  We similarly label the $T$-fixed points of $\pro^n$ by $\{ q_i \}$ where $0 \leq i \leq n$.  One may compute the equivariant cohomology ring of $N_d$ to be

\[
H^*_G(N_d) = \frac{\C[\kappa, \lambda_0, ..., \lambda_n, \hbar]}{\prod_{l=0}^n \prod_{m=1}^d (\kappa - \lambda_l - m\hbar)}
\]

\noindent where $\kappa$ is the equivariant hyperplane class, the $\lambda_l$ are weights of the $T$-action, and $\hbar$ is the weight of the final $\C^*$-action \cite{coxkatz}.  Let $\mathcal{R}_T H_G^*(N_d)$ denote the localization of $H_G^*(N_d)$ where polynomials in the $\lambda_i$ are inverted.

\begin{definition} A sequence of equivariant classes $Q = \{Q_d\}$ with $Q_d \in \mathcal{R}_T H_G^*(N_d)$ is called \emph{$\Omega$-Euler data} if there is an invertible class $Q_0 = \Omega \in \mathcal{R}_TH_T^*(\pro^n)$ so $Q$ satisfy the relations

\[
i^*_{q_i}(\Omega) \cdot i^*_{p_{i,r}}(Q_d) = i^*_{p_{i,r}}(Q_r) \cdot i^*_{p_{i,0}}(Q_{d-r})
\]

\noindent for each $0 \leq i \leq n, d > 0,$ and $0 < r < d$.
\end{definition}

\noindent We will casually refer to the data as Euler data when $\Omega$ is understood or unimportant.
  
Because pullback to $p_{i,r}$ of a class $\omega \in H_G^*(N_d)$ is given by evaluation in the polynomial ring of the equivariant hyperplane class $\kappa$ at $(\lambda_i + r\hbar)$, it is customary to write $i^*_{p_{i,r}}(\omega)$ as $\omega(\lambda_i + r\hbar)$.  This way, a sequence of equivariant classes $Q$ is $\Omega$-Euler data if

\begin{equation}
\Omega(\lambda_i) \cdot Q_d(\lambda_i + r\hbar) = Q_r(\lambda_i + r\hbar) \cdot Q_{d-r}(\lambda_i).
\label{originaleulercondition}
\end{equation}

We will also need the notion of linked Euler data:

\begin{definition} Two $\Omega$-Euler data $P$ and $Q$ are said to be \emph{linked} if $P_d(\lambda_i) = Q_d(\lambda_i)$ at $\hbar = (\lambda_i - \lambda_j)/d$ for all $i$, $j \neq i$, and $d$.
\end{definition}

Geometrically, Euler data are linked if they agree on multiple coverings of coordinate lines in $N_d$.  The following uniqueness lemma for linked Euler data is proved in \cite{lly:mirror1}:

\begin{lemma}
If $Q$ and $P$ are linked $\Omega$-Euler data such that $\text{deg}_{\hbar}(Q(\lambda_i) - P(\lambda_i)) \leq (n+1)d - 2$, then $Q = P$.
\label{llyuniqueness}
\end{lemma}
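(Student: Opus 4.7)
The plan is to induct on $d$, with base case $d = 0$ immediate from $P_0 = Q_0 = \Omega$. For the inductive step, set $R_d := Q_d - P_d$ and assume $P_e = Q_e$ for all $0 \leq e < d$. After reducing modulo the degree-$(n+1)d$ relation defining $H^*_G(N_d)$, I view $R_d$ as a polynomial in $\kappa$ of $\kappa$-degree strictly less than $(n+1)d$ with coefficients in $\C(\lambda)[\hbar]$, and aim to show $R_d \equiv 0$.

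First, I apply the Euler data relation (\ref{originaleulercondition}) to each of $P$ and $Q$ for every $0 < r < d$ and $0 \leq i \leq n$. Subtracting, the inductive hypothesis forces the right-hand side to cancel identically, leaving $\Omega(\lambda_i)\cdot R_d(\lambda_i + r\hbar) = 0$. Since $\Omega$ is invertible in $\mathcal{R}_T H_T^*(\pro^n)$, this yields $R_d(\lambda_i + r\hbar) = 0$ for all such $(i,r)$, exhibiting $(n+1)(d-1)$ distinct roots of $R_d$ in $\kappa$. Polynomial division in $\C(\lambda)[\hbar][\kappa]$, legitimate because the divisor is monic in $\kappa$, then produces the factorization
\[
R_d(\kappa) = \prod_{l=0}^n \prod_{m=1}^{d-1}(\kappa - \lambda_l - m\hbar)\cdot S(\kappa),
\]
with $S(\kappa)$ of $\kappa$-degree at most $n$.

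The crux is to show $S \equiv 0$ by combining the degree hypothesis with the linking condition. Evaluated at $\kappa = \lambda_i$, the product factor is a polynomial in $\hbar$ of degree exactly $(n+1)(d-1)$, with leading coefficient a nonzero scalar multiple of $((d-1)!)^{n+1}$; hence the hypothesis $\deg_\hbar R_d(\lambda_i) \leq (n+1)d - 2$ forces $\deg_\hbar S(\lambda_i) \leq n-1$. Meanwhile, the linking assumption gives $R_d(\lambda_i)$ evaluated at $\hbar = (\lambda_i - \lambda_j)/d$ equal to zero for each $j \neq i$. Using algebraic independence of the $\lambda_l$'s, one checks directly that the product factor does not vanish at these specializations, so $S(\lambda_i)$ must vanish at $n$ distinct values of $\hbar$. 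A polynomial of $\hbar$-degree $\leq n-1$ with $n$ roots is identically zero, so $S(\lambda_i) = 0$ in $\C(\lambda)[\hbar]$ for every $i$. Finally, $S(\kappa)$ is a $\kappa$-polynomial of degree $\leq n$ vanishing at the $n+1$ distinct weights $\lambda_0, \ldots, \lambda_n$, so $S \equiv 0$, and therefore $R_d = 0$, closing the induction.

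The main obstacle I anticipate is the tightness of the counting in the third paragraph. The product contributes exactly $(n+1)(d-1)$ to the $\hbar$-degree, leaving a gap of exactly $n-1$ to the hypothesis degree $(n+1)d - 2$; this is precisely one less than the number $n$ of linking constraints, so the argument has no slack. Making it rigorous requires verifying that neither the leading $\hbar$-coefficient of the product nor any of its values at $\hbar = (\lambda_i - \lambda_j)/d$ can accidentally vanish, both of which ultimately reduce to algebraic independence of the equivariant weights.
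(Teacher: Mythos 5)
Your opening steps are sound and, in fact, reproduce the heart of the standard argument: the Euler relations plus the inductive hypothesis kill the restrictions at the interior fixed points $p_{i,r}$, $0<r<d$, and your factor-and-count step (divide out the $(n+1)(d-1)$ interior factors, note the quotient evaluated at $\kappa=\lambda_i$ has $\hbar$-degree at most $n-1$ while linking supplies $n$ roots $\hbar=(\lambda_i-\lambda_j)/d$) correctly forces $Q_d(\lambda_i)-P_d(\lambda_i)\equiv 0$. This is the same tight counting $(n+1)d-2=(n+1)(d-1)+(n-1)$ that drives the proof of the analogous Lemma \ref{uniquenesslemma} in this paper (and the Lian-Liu-Yau argument), packaged as divisibility in $\kappa$ rather than via localization of $\int_{(N_d)_G}\kappa^s(Q_d-P_d)$.

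The gap is in the last step. Your degree bookkeeping takes the relation defining $H^*_G(N_d)$ to have $\kappa$-degree $(n+1)d$, but $N_d=\pro(H^0(\pro^1,\mathcal{O}(d))^{n+1})$ has $(n+1)(d+1)$ fixed points $p_{i,r}$, $0\le r\le d$, and the relation is $\prod_{l=0}^n\prod_{m=0}^d(\kappa-\lambda_l-m\hbar)$; the displayed formula in Section 2 omits the $m=0$ factors (with that truncated relation the restriction to $p_{i,0}$ would not even be well defined). Consequently a reduced representative of $R_d$ has $\kappa$-degree up to $(n+1)(d+1)-1$, your $S$ has $\kappa$-degree up to $2n+1$, and vanishing at the $n+1$ points $\kappa=\lambda_i$ no longer gives $S\equiv 0$. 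Put geometrically: your argument never controls the restrictions $R_d(\lambda_i+d\hbar)$ at the top fixed points $p_{i,d}$, and nothing you use constrains them, since the Euler relation is vacuous at $r=d$ and both the linking and the degree hypothesis only see $R_d(\lambda_i)$. Indeed, with $\Omega=1$ the sequences $Q_d\equiv 1$ and $P_d=1+c\prod_{l=0}^n(\kappa-\lambda_l)$ satisfy every condition you invoke, agree at all $p_{i,r}$ with $r<d$, yet differ at $p_{i,d}$; so a conclusion drawn only from those restrictions cannot close the induction (note the induction genuinely needs the top values at lower degrees, since $Q_r(\lambda_i+r\hbar)$ enters the interior relations at degree $d$). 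The missing ingredient is exactly what the known proofs supply at $r=d$: in the paper's Lemma \ref{uniquenesslemma} the $r=d$ localization term is cancelled using the hypothesis $Q_0=P_0$ together with the $r=d$ Euler relation, while for the $(0,0)$-data to which the present lemma is applied one uses the conjugation symmetry $Q_d(\lambda_i+d\hbar)=\overline{Q_d(\lambda_i)}$ (the property invoked in the proof of Lemma \ref{trans1}), which transports your bottom-value vanishing to the top points. You need to add a step of this kind, after which your divisibility argument (now with all $(n+1)(d+1)$ restrictions vanishing) does yield $R_d=0$.
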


A key result from \cite{lly:mirror1} is the existence of an equivariant collapsing map

\begin{equation}
\varphi: \overline{M_{0,0}}(\pro^n \times \pro^1, (d,1)) \rightarrow N_d.
\label{collapsingmap}
\end{equation}

We may take the pullback of any multiplicative equivariant characteristic class $b$ of the bundles $V_d$ on $\overline{M_{0,0}}(\pro^n,d)$ via $\pi$ and then pushforward via $\varphi$ to obtain a sequence $\hat Q = \{\hat Q_d \}$ with $\hat Q_d \in H^*_G(N_d)$:

\[
\hat Q_d = \varphi_*(\pi^*(b(V_d))) \in H^*_G(N_d),
\]

\noindent Unless stated otherwise, $b$ will always be the equivariant Euler class in our applications, although the total Chern class is also interesting \cite{lly:mirror1}.  As a special case of the more general Lemma \ref{showeulerdata}, $\hat Q$ is Euler data.  To be more precise, if $V = V^+ \oplus V^-$ is the given concavex bundle, then $\hat Q$ turns out to be $\frac{e(V^+)}{e(V^-)}$-Euler data.

%
%

\section{$(u,v)$-Euler Data}


For any $m \geq 0$, $u + v = m$, and $k_i \geq 0$, let

\begin{align*}
&\vec{k} = (k_1,...,k_m)\\
&\vec{k}' = (k_1,...,k_v,0,...,0)\\
&\vec{k}'' = (0,...,0,k_{v+1},...,k_m)
\end{align*}

\noindent where $m,u,v$, and the $k_i$ are nonnegative integers.  Also let $|\vec{k}| = \sum k_i$.

\begin{definition} A $(u+v+1)$-sequence $Q = \{Q_{d,\vec{k}}\}$ with $Q_{d,\vec{k}} \in \mathcal{R}_T H_G^*(N_d)$ is called \emph{$(u,v,\beta,\Omega)$-Euler data} if the following hold:

\begin{enumerate}
\item $Q_{0,\vec{k}} = \beta^{|\vec{k}|} \Omega$ for some $\beta \in H^*_T(\pro^n)$ and all $\vec{k}$.
\item $Q$ satisfies the \emph{Euler condition}

\begin{equation}
\Omega(\lambda_i) \cdot Q_{d,\vec{k}}(\lambda_i + r\hbar) = Q_{r,\vec{k}''}(\lambda_i + r\hbar) \cdot Q_{d-r,\vec{k}'}(\lambda_i).
\label{eulercondition}
\end{equation}
\end{enumerate}

\noindent We will call the sequence at some fixed $\vec{k}$ the \emph{height $\vec{k}$} data and denote it by $Q_{\vec{k}}$.  As before, we often suppress $\beta$ and $\Omega$ when unimportant, casually referring to $(u,v)$-Euler data.
\end{definition}


\begin{example} Lian-Liu-Yau's Euler data coincides with $(0,0,\beta,\Omega)$-Euler data.  More generally, the height $\vec{0}$ part of any $(u,v)$-Euler data obeys their gluing relation (\ref{originaleulercondition}).  However, height $\vec{k} \neq \vec{0}$ data is not quite $\Omega$-Euler data nor $\beta^{|\vec{k}|}\Omega$-Euler data but rather a deformation in some sense, and we will say that $Q$ \emph{extends} $Q_{\vec{0}}$.  This constitutes a different generalization of Euler data from the one given in \cite{lly:mirror4}.  The motivating idea is that we wish to work within a framework that allows one to tie marked data coming from a concavex bundle to unmarked data rather than study the problem for each number of markings independently.
\end{example}


%

%
%

\subsection{$(u,v)$-Euler data and Concavex Bundles}

We will now show that a concavex bundle naturally induces $(u,v)$-Euler data when defined through restriction to certain subspaces of an appropriate nonlinear moduli space.

Let $M_d^{u,v} \subset \overline{M_{0,u+v}}(\pro^n \times \pro^1, (d,1))$ be the subspace of the Deligne-Mumford compactification of maps $\pro^1 \rightarrow \pro^n \times \pro^1$ of bi-degree $(d,1)$ with $u+v$ marked points such that the first $u$ marked points are sent to $[0,1]$ under projection to the $\pro^1$ factor and the final $v$ marked points are sent to $[1,0]$ under that projection.  We call $M_d^{u,v}$ the \emph{nonlinear model} and again give it a $G$-action induced by multiplication on the image as in (\ref{gaction}).  Note that $M_d^{0,0}$ is the nonlinear moduli $M_d$ used by Lian-Liu-Yau.  The natural projection to $\overline{M_{0,u+v}}(\pro^n,d)$ will again be denoted by $\pi$, suppressing the $u,v$ dependence:

\[
\pi: M_d^{u,v} \longrightarrow \overline{M_{0,u+v}}(\pro^n,d)
\]

As in the unmarked case, there exists an equivariant collapsing map to $N_d$, still to be denoted $\varphi: M_d^{u,v} \rightarrow N_d$, defined as composition of the Lian-Liu-Yau collapsing map (\ref{collapsingmap}) from $\overline{M_{0,0}}(\pro^n \times \pro^1, (d,1))$ to $N_d$ with the forgetful morphism(s) from $M_d^{u,v}$ to $\overline{M_{0,0}}(\pro^n \times \pro^1, (d,1))$ forgetting the markings.

Fix $u+v=m$.  For $\vec{k} = (k_1,...,k_m)$, let

\[
\chi_{d,\vec{k}}^{V} = \pi^*\left(b(V_d) \prod_{j=1}^mev_1^*(p^{k_j})\right) \in H^*_G(M_d^{u,v}),
\]

\noindent where $p$ is the equivariant hyperplane class on $\pro^n$ and $b$ is any multiplicative equivariant characteristic class.  Also let $\hat Q = \{\hat Q_{d,\vec{k}}\}$:

\[
\hat Q_{d,\vec{k}} = \varphi_*(\chi_{d,\vec{k}}^{V}) \in H^*_G(N_d).
\]

\noindent The $m=0$ case is precisely the Euler data $\hat Q$ studied in \cite{lly:mirror1}.  Our immediate goal is to prove the following:

\begin{lemma}
$\hat Q$ is $(u,v,p,\Omega)$-Euler data, where $\Omega = \frac{e(V^+)}{e(V^-)}$ and $p$ is the equivariant hyperplane class on $\pro^n$.
\label{showeulerdata}
\end{lemma}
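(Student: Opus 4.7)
The plan is to adapt the functorial localization strategy used by Lian-Liu-Yau in the unmarked case, tracking the additional contributions from the $m = u + v$ marked points. Two items must be verified: the initial condition $\hat Q_{0,\vec k} = p^{|\vec k|}\Omega$ and the Euler condition (\ref{eulercondition}).

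The initial condition is straightforward: when $d = 0$, the bi-degree $(0,1)$ forces the $\pro^1$-component of every stable map in $M_0^{u,v}$ to be an isomorphism onto $\{x\}\times \pro^1$ for some $x \in \pro^n$, and the map to $\pro^n$ is constant at $x$. Thus $M_0^{u,v}$ fibers trivially over $\pro^n$, the obstruction bundle $V_0 = V^+\oplus V^-$ pulls back from $\pro^n$ (with $H^1$ of $f^*V^-$ vanishing on the genus-zero domain), and each $ev_j$ factors through the structure map to $\pro^n$. A direct computation of $\varphi_*\pi^*$ in this trivial fibration then yields $p^{|\vec k|}\Omega$.

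For the Euler condition, apply functorial localization to $\varphi: M_d^{u,v}\to N_d$ at the isolated fixed point $p_{i,r}$. Since the representing polynomial $w_1^r w_0^{d-r}$ has zeros of orders $d-r$ at $[0,1]$ and $r$ at $[1,0]$, the preimage $E = \varphi^{-1}(p_{i,r})$ consists of stable maps whose main component has bi-degree $(0,1)$ mapping isomorphically to $\{q_i\}\times \pro^1$, with a bubble tree of $\pro^n$-degree $d-r$ glued at $(q_i,[0,1])$ and one of degree $r$ glued at $(q_i,[1,0])$. The constraint on the markings' projection to $\pro^1$ partitions them between the two bubbles (with the node added as an extra marked point on each factor), so that $E\cong E_1\times E_2$ with $E_i$ a fixed locus in the appropriate moduli of stable maps of degree $d-r$ (resp.\ $r$) to $\pro^n$ contracted to $q_i$. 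The normalization sequence for $f^*V^\pm$ on the split domain (with vanishing $H^1$ for $V^+$ and the dual statement for $V^-$) gives
\begin{equation*}
b(V_d)\bigr|_E \;=\; \Omega(\lambda_i)^{-1}\cdot b(V_{d-r})\bigr|_{E_1}\cdot b(V_r)\bigr|_{E_2},
\end{equation*}
the denominator $\Omega(\lambda_i)=b(V|_{q_i})$ absorbing the two node contributions since both nodes map to $q_i$. The product $\prod_j ev_j^*(p^{k_j})$ splits factor-wise according to which bubble hosts each marking, producing the height-$\vec k'$ and height-$\vec k''$ classes on the two factors. Matching the normal bundle $e_T(E/M_d^{u,v})$, which decomposes as the product of the normal bundles of $E_1$ and $E_2$ in their respective moduli together with the two node-smoothing tangent lines, against $e_T(\{p_{i,r}\}/N_d)$, and invoking functorial localization on each of the two factor maps, identifies the two pushforwards with $\hat Q_{d-r,\vec k'}(\lambda_i)$ and $\hat Q_{r,\vec k''}(\lambda_i+r\hbar)$, producing (\ref{eulercondition}).

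The principal obstacle is the bookkeeping at the two smoothing nodes: the $\pm\hbar$-weighted psi-class contributions from smoothing each node must combine with the tangent weights of $N_d$ at $p_{i,r}$ along the coordinate lines toward $p_{i,r\pm 1}$ so as to produce cleanly the prefactor $\Omega(\lambda_i)^{-1}$ together with the correct fixed-point evaluations at $\lambda_i$ and $\lambda_i+r\hbar$. Beyond these node contributions, the marking classes $ev_j^*(p^{k_j})$ enter only as pullbacks on the respective factors and do not entangle with the node-smoothing weights, so once the $m=0$ node bookkeeping of Lian-Liu-Yau is reproduced, the marked case follows.
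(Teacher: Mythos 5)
Your proposal follows the paper's own proof essentially step for step: functorial localization of $\varphi$ over the fixed point $p_{i,r}$, identification of the relevant fixed loci as products of marked fixed loci of $\overline{M_{0,u+1}}(\pro^n,r)$ and $\overline{M_{0,v+1}}(\pro^n,d-r)$ attached to a bi-degree $(0,1)$ component lying over $q_i$, the normalization-sequence gluing relation (the paper's Lemma \ref{gluinglemma}), a normal-bundle decomposition with the two node-smoothing lines carrying the $\pm\hbar$ weights, and matching the two factors with the $r=0$ and $r=d$ restrictions of the lower-degree data. The one caveat hidden in your deferred bookkeeping is that the markings do enter the normal-bundle computation itself --- pinning their $\pro^1$-projections contributes the paper's factor $\text{Euler}_G(N(J))=(-1)^v\hbar^{u+v}$, along with the $T_{q_i}\pro^n$ corrections from the two node evaluations --- but since the same marked-dependent factors reappear in the endpoint evaluations (\ref{qhatdminusr}) and (\ref{qhat2}) they cancel in the Euler condition, so your outline is sound and coincides with the paper's argument.
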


We first give some setup and prove a crucial gluing lemma (Lemma \ref{gluinglemma}).
Although the obstruction bundles $V_d$ on $\overline{M_{0,m}}(\pro^n,d)$ induced by a concavex bundle $V = V^+ \oplus V^-$ on $\pro^n$ exist on different spaces, there are nontrivial relations on multiplicative equivariant characteristic classes connecting them.  For any $0 < r < d$, define $\overline M^{i,j}(r,d-r)$ by the pullback diagram

\begin{diagram}
&\overline M^{i,j}(r,d-r)	&\rTo^{p_1}&	&\overline{M_{0,i+1}}(\pro^n,r)&\\
&\dTo^{p_2}	&&	&\dTo_{ev_{i+1}}&\\
&\overline{M_{0,j+1}}(\pro^n,d-r)&	&\rTo^{ev_{j+1}}	&\pro^n&\\
\end{diagram}

\noindent where $p_1$ and $p_2$ are projections and $ev_k$ evaluation at the $k$-th marked point.  We think of $\overline M^{i,j}(r,d-r)$ as consisting of pairs of an $(i+1)$-pointed stable map of degree $r$ and a $(j+1)$-pointed stable map of degree $d-r$, $((f_1,C_1,w_1,...,w_{i+1}),(f_2,C_2,z_1,...,z_{j+1}))$, such that $f_1(w_{i+1}) = f_2(z_{j+1}) = q$.  We also have the diagram

\begin{diagram}
&\overline M^{i,j}(r,d-r) &\rTo^{\psi}	&\overline{M_{0,i+j}}(\pro^n,d)\\
&\dTo^{ev}	&	&\\
&\pro^n	&	&\\
\end{diagram}

\noindent where the image of $ev$ is $q$, and the image of $\psi$ is the stabilization of the map formed by connecting $C_1$ at $w_{i+1}$ with $C_2$ at $z_{j+1}$ of degree $d$ given by $f|_{C_l} = f_l$.

In the case that $r=0$ and $i=0$ or $1$, $\overline{M_{0,i+1}}(\pro^n,r)$ is empty.  To remedy this we replace $\overline{M_{0,i+1}}(\pro^n,0)$ with a copy of $\pro^n$, and replace $ev_{i+1}$ with $id_{\pro^n}$.  This way, $\overline M^{i,j}(0,d) \cong \overline{M_{0,j+1}}(\pro^n,d)$.  We let $\psi$ forget the $(j+1)$-th point when $i=0$ and be the identity when $i=1$.  This way $\psi$ results in an $(i+j)$-point curve in all situations.  The case $r=d$ is similarly modified when $j=0$ or $1$.

\begin{lemma}[Gluing Lemma]
If $b$ is a multiplicative equivariant characteristic class then the following relation on $\overline M^{i,j}(r,d-r)$ holds for $0<r<d$:
\[
ev^*\Omega \cdot \psi^*b(V_d) = p_1^*ev_{i+1}^*b(V_r) \cdot p_2^*ev_{j+1}^*b(V_{d-r})
\]

\noindent where $\Omega = \frac{b(V^+)}{b(V^-)}$.
\label{gluinglemma}
\end{lemma}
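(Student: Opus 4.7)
The plan is to exploit the fact that a point of $\overline M^{i,j}(r,d-r)$ corresponds to a pair of stable maps whose images can be glued at the marked point $q$ to form a degree-$d$ stable map from a nodal curve $C = C_1 \cup_q C_2$. The obstruction bundles $V_r$, $V_{d-r}$, and $V_d$ (pulled back via $p_1, p_2, \psi$ respectively) are all computed on this same glued curve, and the normalization sequence of $C$ provides the geometric bridge between them.

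First I would write down the standard normalization short exact sequence
\[
0 \to f^*V^{\pm} \to f_1^*V^{\pm} \oplus f_2^*V^{\pm} \to V^{\pm}|_q \to 0
\]
on $C$, where $f$ is the glued map and $f_l = f|_{C_l}$. Taking the long exact sequence in cohomology and using that $V^+$ is positive so $H^1(C,f^*V^+) = H^1(C_l,f_l^*V^+) = 0$, I obtain the short exact sequence of vector bundles on $\overline M^{i,j}(r,d-r)$:
\[
0 \to \psi^*V_d^+ \to p_1^*V_r^+ \oplus p_2^*V_{d-r}^+ \to ev^*V^+ \to 0.
\]
Dually, since $V^-$ is negative so $H^0$ vanishes on all components, the long exact sequence for $V^-$ collapses to
\[
0 \to ev^*V^- \to \psi^*V_d^- \to p_1^*V_r^- \oplus p_2^*V_{d-r}^- \to 0.
\]
Here I am writing $V_d^\pm$ for the summands of the obstruction bundle corresponding to $V^\pm$; the concavex hypothesis is exactly what is needed to make these the only nontrivial cohomology groups so the sequences are short exact bundle sequences rather than merely a four-term exact sequence of sheaves.

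Next, multiplicativity of $b$ applied to each of these short exact sequences gives
\[
\psi^*b(V_d^+) \cdot ev^*b(V^+) = p_1^*b(V_r^+) \cdot p_2^*b(V_{d-r}^+),
\]
\[
\psi^*b(V_d^-) = ev^*b(V^-) \cdot p_1^*b(V_r^-) \cdot p_2^*b(V_{d-r}^-).
\]
Multiplying the first by $ev^*\Omega = ev^*b(V^+)/ev^*b(V^-)$ and combining with the second (and using $b(V_d) = b(V_d^+)\cdot b(V_d^-)$ plus $b(V_r) = b(V_r^+)\cdot b(V_r^-)$) cancels the boundary factor $ev^*b(V^\pm)$ from both sides, yielding the claimed identity. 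I would finish by incorporating the pullback of $ev_{i+1}$ and $ev_{j+1}$, which is merely notational because both evaluations land on $q$ and agree with $ev$ on $\overline M^{i,j}(r,d-r)$.

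The main obstacle I expect is justifying that the cohomology sheaves really form vector bundles on $\overline M^{i,j}(r,d-r)$ and that the cohomology long exact sequence globalizes to a short exact bundle sequence rather than merely holding fiberwise. This requires the concavex condition (positivity of $V^+$ and negativity of $V^-$ on every rational curve) together with the flatness of the universal family over $\overline M^{i,j}(r,d-r)$, and was handled in this form in \cite{lly:mirror1}. The degenerate cases $r=0$ and $r=d$ require special attention since the conventions replacing empty moduli with a copy of $\pro^n$ change the geometric meaning of the gluing, but in those cases $V_0^+$ is the trivial bundle with fiber $V^+|_q$ and $V_0^- = 0$, so one checks directly that the identity reduces to a tautology.
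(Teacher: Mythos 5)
Your proposal is correct and follows essentially the same route as the paper: the normalization sequence $0 \to f^*V \to f_1^*V \oplus f_2^*V \to V|_q \to 0$, passed to cohomology separately for the convex and concave summands, followed by multiplicativity of $b$ and cancellation of the boundary factor $b(V^{\pm}|_q)$. You merely make explicit the bundle-level bookkeeping and the final algebra that the paper leaves as ``follows easily,'' and your remarks on the degenerate cases are superfluous since the lemma is only asserted for $0<r<d$.
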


\begin{proof}

Suppose $((f_1,C_1,w_1,...,w_{i+1}),(f_2,C_2,z_1,...,z_{j+1}))$ map to $(f,C,w_1,...,w_i,z_1,...,z_j)$ under $\psi$.  This gives rise to the exact sequence

\[
0 \rightarrow f^*V \rightarrow f_1^*V \oplus f_2^*V \rightarrow V|_q \rightarrow 0
\]

\noindent In the case that $V$ is convex, passing to cohomology gives the exact sequence

\[
0 \rightarrow H^0(C,f^*V) \rightarrow H^0(f_1^*V) \oplus H^0(f_2^*V) \rightarrow V|_q \rightarrow 0,
\]

\noindent from which the lemma follows easily.  Likewise, if $V$ is concave we have the exact sequence

\[
0 \rightarrow V|_q \rightarrow H^1(C,f^*V) \rightarrow H^1(f_1^*V) \oplus H^1(f_2^*V) \rightarrow 0,
\]

\noindent and again the corresponding gluing lemma follows.  The general concavex case is then an easy combination of the two cases. \qed

\end{proof}

We now prove Lemma \ref{showeulerdata}:

\begin{proof}
First observe that $\hat Q_{d,\vec{k}}$ is automatically polynomial in $\hbar$, as it is an equivariant class on $N_d$.  We show that $\hat Q$ satisfies ($\ref{eulercondition}$) by pulling the calculation back to $M_d$:

\begin{equation}
\hat Q_{d,\vec{k}}(\lambda_i + r\hbar) = \int_{(N_d)_G} \phi_{p_{i,r}} \hat Q_{d,\vec{k}} = \int_{(M_d^{u,v})_G} \varphi^*(\phi_{p_{i,r}})\chi_{d,\vec{k}}^{V},
\label{pullback}
\end{equation}

\noindent where $\phi_{p_{i,r}} = \prod_{(j,s) \neq (i,r)}(\kappa - (\lambda_j + s\hbar))$ is the equivariant Poincar\'e dual to the fixed point $p_{i,r}$.  We wish to calculate (\ref{pullback}) via localization and so must understand the $G$-fixed components.  Each fixed component $M_{\Gamma} \subseteq M_d$ is labeled by a decorated graph $\Gamma$ in the usual way \cite{coxkatz} \cite{mirrorsymmetry}, although we will not directly need this description.  Furthermore, we must have  $\varphi(M_{\Gamma}) = p_{i,r}$.

We first consider the case $0 < r < d$.  To obtain a better description of these $G$-fixed components and in particular their normal bundles inside of $M_d^{u,v}$, we build a variation $\overline{\psi}$ of the map $\psi$ given above as follows: for each point of $\overline M^{u,v}(r,d-r)$ represented by the pair $((f_1,C_1,w_1,...,w_{u+1}),(f_2,C_2,z_1,...,z_{v+1}))$ with $f_1(w_{u+1}) = q = f_2(z_{v+1})$, let $C = C_0 \cup C_1 \cup C_2$ where $C_0 \cong \pro^1$, with $[1,0] \in C_0$ glued to $w_{u+1} \in C_1$, and $[0,1] \in C_0$ glued to $z_{v+1} \in C_2$.  Then define $f: C \rightarrow \pro^1 \times \pro^n$ by

\begin{align*}
f|_{C_0}(z) &= (z,q)\\
f|_{C_1}(z) &= ([1,0],f_1(z))\\
f|_{C_2}(z) &= ([0,1],f_2(z))\\
\end{align*}

\noindent This induces a map $\overline M^{u,v}(r,d-r) \rightarrow \overline{M_{0,u+v}}(\pro^n \times \pro^1,(d,1))$.  Moreover, the image is contained in $M_d^{u,v}$ and so defines a map $\bar \psi$

\[
\bar \psi: \overline{M}^{u,v}(r,d-r) \hookrightarrow M_d^{u,v}.
\]

Fix $0 \leq i \leq n$ for the moment and let $\{ F_k^j \}$ denote the set of $T$-fixed components of $\overline{M_{0,j}}(\pro^n,k)$ such that the $j$-th marked point is sent to $q_i \in \pro^n$.  In the present case, it is clear that $F_r^1 \times F_{d-r}^2 \subseteq \overline M (r,d-r)$, and we identify this subset with its image under $\bar \psi$, writing $F_r^1 \times F_{d-r}^2 \subseteq M_d^{u,v}$.  It is not hard to see that all the fixed components $M_{\Gamma}$ of $M_d^{u,v}$ with $\varphi(M_{\Gamma}) = p_{i,r}$ arise in this way.  Hence by functorial localization, (\ref{pullback}) becomes

\[
\hat Q_{d,\vec{k}}(\lambda_i + r\hbar) = \sum_{\{F_r^{u+1} \times F_{d-r}^{v+1}\}} \int_{(F_r^{u+1} \times F_{d-r}^{v+1})_G} \frac{i^*_{\Gamma}(\varphi^*(\phi_{p_{i,r}})\chi_{d,\vec{k}}^{V})}{\text{Euler}_G(N_{F_r^{u+1} \times F_{d-r}^{v+1}})}.
\]

We want to identify the class $\text{Euler}_G(N(F_r^{u+1} \times F_{d-r}^{v+1}))$ in the equivariant K-theory.  By considering the various inclusions at hand it is straightforward to find

\[
N(F_r^{u+1} \times F_{d-r}^{v+1}) = N(F_r^{u+1}) + N(F_{d-r}^{v+1}) - 2 T_{q_i}\pro^n + T_{q_i}\pro^n + N(\bar \psi) - N(J),
\]

\noindent where $J$ is the inclusion map $J: M_d^{u,v} \hookrightarrow \overline{M_{0,u+v}}(\pro^n \times \pro, (d,1))$.  Thus, omitting pullbacks for simplicity,

\[
\text{Euler}_G(N(F_r^{u+1} \times F_{d-r}^{v+1})) = \text{Euler}_G(N(\bar \psi)) \frac{\text{Euler}_T(N(F_r^{u+1}))\text{Euler}_T(N(F_{d-r}^{v+1}))}{\text{Euler}_G(N(J))\text{Euler}_T(T_{q_i}\pro^n)}.
\]

\noindent These pieces may be identified as

\begin{align*}
\text{Euler}_T(T_{q_i}\pro^n) &= \prod_{j \neq i} (\lambda_i - \lambda_j)\\
\text{Euler}_G(N(J))) &= (-1)^{v}\hbar^{u+v}\\
\text{Euler}_G(N(\bar \psi)) &= \frac{-\hbar^{-2}}{(-\hbar - c_1^G(\mathcal{L}_{d-r,v+1}))(\hbar - c_1^G(\mathcal{L}_{r,u+1}))}
\end{align*}

\noindent where $\mathcal{L}_{r,j}$ is the line bundle on $\overline{M_{0,j}}(\pro^n,d)$ given by the cotangent line to the $j$-th marked point on each curve.  Furthermore, by Lemma \ref{gluinglemma}, on $F^{u+1}_r \times F^{v+1}_{d-r}$

\[
\Omega(\lambda_i) b(V_d) = b(V_r)b(V_{d-r})
\]

\noindent Putting all this together, we have:

\begin{align}
\hat Q_{d,\vec{k}}(\lambda_i + r\hbar) =& \int_{(M_d^{u,v})_G} \varphi^*(\phi_{i,r})\chi_{d,\vec{k}}^{V} \notag\\
	=& \enspace (-1)^v \hbar^{u+v-2}(\Omega(\lambda_i))^{-1}i^*_{p_{i,r}}(\phi_{i,r}) \prod_{j \neq i} (\lambda_i - \lambda_j) \times \notag\\
	& \sum_{\{F_r^{u+1}\}}	\int_{(F_r^{u+1})_T} \frac{\pi^*_{u+1}(b(V_r)\prod_{j=1}^{u} ev_{j}^*p^{k_{j+v}})}{\text{Euler}_T(N(F_r^{u+1}))(\hbar - c_1^G(\mathcal{L}_{r,u+1}))}\times \notag\\
	&	\sum_{\{F_{d-r}^{v+1} \}} \int_{(F_{d-r}^{v+1})_T} \frac{\pi_{v+1}^*(b(V_{d-r}) \prod_{j=1}^{v}ev_j^*p^{k_j})}{\text{Euler}_T(N(F_{d-r}^{v+1}))(\hbar + c_1^G(\mathcal{L}_{d-r,v+1}))}.\label{qhatfull}
\end{align}

\noindent A similar analysis for $r=0$ yields

\begin{equation}
\hat Q_{d,\vec{k}}(\lambda_i) = (-1)^v i^*_{p_{i,0}}(\phi_{i,0}) \hbar^{v-1} \prod_{j=1}^u \lambda_i^{k_{v+j}} \sum_{\{F_d^{v+1}\}} \int_{(F_d^{v+1})_T} \frac{\pi_{v+1}^*(b(V_{d}) \prod_{j=1}^{v}ev_j^*p^{k_j})}{\text{Euler}_G(N(F_d^{v+1}))(\hbar + c_1^G(\mathcal{L}_{d,v+1}))}\\
\label{qhat}
\end{equation}

\noindent so that

\begin{equation}
\hat Q_{d-r,\vec{k}'}(\lambda_i) = (-1)^v i^*_{p_{i,0}}(\phi_{i,0}) \hbar^{v-1} \sum_{\{F_{d-r}^{v+1}\}} \int_{(F_{d-r}^{v+1})_T} \frac{\pi_{v+1}^*(b(V_{d-r}) \prod_{j=1}^{v}ev_j^*p^{k_j})}{\text{Euler}_G(N(F_{d-r}^{v+1}))(\hbar + c_1^G(\mathcal{L}_{d-r,v+1}))}.\\
\label{qhatdminusr}
\end{equation}

\noindent Likewise, for $r=d$ we find

\begin{equation}
\hat Q_{r,\vec{k}''}(\lambda_i+r\hbar) = i^*_{p_{i,r}}(\phi_{i,r}) \hbar^{u-1} \sum_{\{F_r^{u+1}\}}	\int_{(F_r^{u+1})_T} \frac{\pi^*_{u+1}(b(V_r)\prod_{j=1}^{u}ev^*_jp^{k_{j+v}})}{\text{Euler}_G(N(F_r^{u+1}))(\hbar - c_1^G(\mathcal{L}_{r,u+1}))}\\,
\label{qhat2}
\end{equation}

\noindent Combining (\ref{qhatfull}), (\ref{qhatdminusr}), and (\ref{qhat2}), it is then straightforward to check that that $\hat Q$ satisfies (\ref{eulercondition}) and thus is $(u,v,p,\Omega)$-Euler data.  (Note that in (\ref{qhatfull}), $i^*_{p_{i,r}}(\phi_{i,r})$ is on $N_d$ but in (\ref{qhatdminusr}) $i^*_{p_{i,0}}(\phi_{i,0})$ is on $N_r$ and in (\ref{qhat2}) $i^*_{p_{i,r}}(\phi_{i,r})$ on $N_{d-r}$.) \qed
\end{proof}

\subsection{Uniqueness for $(0,1)$-Euler data}

In this section we will restrict ourselves to studying a uniqueness lemma for $(0,1)$-Euler data.  The discussion has a natural analog for $(1,0)$-Euler data, which we will not state, that will only be used briefly in the discussion on two markings.

For notational simplicity, we will refer to the height using $k$ rather than $\vec{k} = (k_1)$.

\begin{lemma}[Uniqueness Lemma]
If $Q = \{Q_{d,k}\}$ and $P = \{P_{d,k}\}$ are $(0,1,\beta,\Omega)$-Euler data such that $Q_0 = P_0$ and deg$_{\hbar}(Q_{d,k}(\lambda_i) - P_{d,k}(\lambda_i)) \leq (n+1)d - 1$ for all $i$, $k$, and $d$, then $Q = P$.
\label{uniquenesslemma}
\end{lemma}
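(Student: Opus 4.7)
My plan is to read the hypothesis $Q_0 = P_0$ as asserting that the entire height-zero sequences coincide, i.e., $Q_{d,0} = P_{d,0}$ for every $d$, and then, for each fixed $k \geq 0$, to induct on $d$ to show that $D_{d,k} := Q_{d,k} - P_{d,k}$ vanishes. First I would subtract the $(0,1)$-Euler relations (\ref{eulercondition}) for $Q$ and $P$ and use $D_{r,0} = 0$ to obtain the linear recursion
\[
\Omega(\lambda_i)\,D_{d,k}(\lambda_i + r\hbar) \;=\; Q_{r,0}(\lambda_i + r\hbar)\,D_{d-r,k}(\lambda_i), \qquad 0 \leq r \leq d.
\]
The base case $d=0$ is immediate since $Q_{0,k} = \beta^{k}\Omega = P_{0,k}$. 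Assuming $D_{d',k} = 0$ for every $d' < d$, the right-hand side above vanishes whenever $r \in \{1,\ldots,d\}$ (via the induction hypothesis when $r<d$ and via the base case when $r=d$), and dividing by $\Omega(\lambda_i)$ yields $D_{d,k}(\lambda_i + r\hbar) = 0$ for every $i = 0,\ldots,n$ and every $r = 1,\ldots,d$.

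Next I would exploit these $(n+1)d$ vanishings, out of the $(n+1)(d+1)$ isolated $G$-fixed points of $N_d$, to obtain divisibility of $D_{d,k}$ by
\[
A \;:=\; \prod_{l=0}^n \prod_{r=1}^d (\kappa - \lambda_l - r\hbar)
\]
in the polynomial presentation of $H_G^*(N_d)$. Writing $D_{d,k} = A \cdot E_{d,k}$ with a representative $E_{d,k}$ of $\kappa$-degree at most $n$, evaluation at $\kappa = \lambda_i$ gives $D_{d,k}(\lambda_i) = A(\lambda_i)\cdot E_{d,k}(\lambda_i)$, where a direct computation shows $A(\lambda_i)$ is a polynomial in $\hbar$ of exact degree $(n+1)d$ with nonzero leading coefficient $(-1)^{(n+1)d}(d!)^{n+1}$. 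The hypothesized bound $\deg_{\hbar} D_{d,k}(\lambda_i) \leq (n+1)d - 1$ therefore forces $E_{d,k}(\lambda_i) \equiv 0$ in $\hbar$ for each $i = 0,\ldots,n$. Since $E_{d,k}$ is a polynomial in $\kappa$ of degree at most $n$ vanishing at the $n+1$ distinct values $\lambda_0,\ldots,\lambda_n$, it is identically zero, and hence $D_{d,k}=0$, closing the induction.

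The main subtlety I anticipate is the divisibility step: converting the fixed-point vanishings $D_{d,k}(\lambda_i + r\hbar)=0$ into honest divisibility by $A$ in the quotient ring, and pinning down the representative $E_{d,k}$ of $\kappa$-degree at most $n$. This relies on the pairwise coprimeness of the factors $\kappa - \lambda_l - r\hbar$ in $\C[\kappa,\hbar,\lambda]$ and on the factorization of the full ring relation as $A\cdot\prod_l(\kappa - \lambda_l)$ — both standard for an equivariant projective bundle of this form — and is perhaps cleanest argued via the injection of $H_G^*(N_d)$ into the sum of its fixed-point restrictions. Everything else, namely the linearized recursion, the leading-coefficient computation for $A(\lambda_i)$, and the final Lagrange-interpolation-style step killing $E_{d,k}$, is elementary once the divisibility is in hand.
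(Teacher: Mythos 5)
Your proof is correct, and it reaches the conclusion by a genuinely different route at the decisive step. Both arguments fix $k$, induct on $d$, and subtract the Euler relations using $Q_0=P_0$ (read, as in the paper, as equality of the full height-zero sequences, with the Euler condition also invoked at $r=d$); this is the shared skeleton. The paper then pairs the difference against $\kappa^s$, computes $\int_{(N_d)_G}\kappa^s(Q_{d,k}-P_{d,k})$ by localization, uses the self-linking property of $(0,1)$-Euler data to show the surviving $r=0$ contributions $A_i(\hbar)$ are polynomial in $\hbar$, and finishes with the degree bound plus nondegeneracy of the intersection pairing. You instead note that the subtracted relations force the restriction of $D_{d,k}=Q_{d,k}-P_{d,k}$ to vanish at all fixed points $p_{i,r}$ with $1\le r\le d$, deduce divisibility of its canonical representative by $A=\prod_{l=0}^n\prod_{r=1}^d(\kappa-\lambda_l-r\hbar)$ in $\C(\lambda)[\hbar][\kappa]$ (legitimate: these monic linear factors are pairwise non-associate primes in a UFD, so the factor theorem applies), and then use the exact $\hbar$-degree $(n+1)d$ of $A(\lambda_i)$, with leading coefficient $(-1)^{(n+1)d}(d!)^{n+1}$, together with the hypothesis to kill the cofactor by interpolation in $\kappa$. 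This buys a more elementary proof that dispenses with both the self-linking condition and the localization/pairing machinery, and it gives the stronger intermediate fact that each restriction $D_{d,k}(\lambda_i)$ vanishes, not merely the weighted sums; the paper's version stays closer to the original uniqueness argument of Lian--Liu--Yau (Lemma \ref{llyuniqueness}). One bookkeeping remark: your representative-degree count relies on the relation in $H^*_G(N_d)$ being $\prod_{l=0}^n\prod_{m=0}^d(\kappa-\lambda_l-m\hbar)$, of degree $(n+1)(d+1)$, matching the $(n+1)(d+1)$ fixed points $p_{i,r}$; the presentation displayed in Section 2 omits the $m=0$ factors, and your reading is the correct one.
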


\begin{proof}
We will prove $Q_{d,k} = P_{d,k}$ for each $k$ by induction on $d$.  Notice that $Q_{0,k} = \beta^k\Omega = P_{0,k}$ by assumption.  Suppose that for any fixed $k > 0$, $Q_{d',k} = P_{d',k}$ for $0 < d' < d$.  Since the intersection pairing is nondegenerate, it will be sufficient to show that

\[
\int_{(N_d)_G} \kappa^s (Q_{d,k}- P_{d,k}) = 0 \qquad \text{for all } s \geq 0,
\]

\noindent where $\kappa$ is the equivariant hyperplane class on $N_d$.  We compute this integral by localization:

\begin{align*}
\int_{(N_d)_G} \kappa^s (Q_{d,k} - P_{d,k}) &= \sum_{0 \leq i \leq n} \sum_{0 \leq r \leq d} \frac{(\lambda_i+r\hbar)^s i^*_{p_{i,r}}(Q_{d,k}-P_{d,k})}{\text{Euler}_G(N_{p_{i,r}})}\\
	&= \sum_{i=0}^n \left[ \frac{\lambda_i^s (Q_{d,k}(\lambda_i)-P_{d,k}(\lambda_i))}{\text{Euler}_G(N_{p_{i,0}})} + \frac{(\lambda_i + d\hbar)^s (Q_{d,k}(\lambda_i+d\hbar)-P_{d,k}(\lambda_i+d\hbar))}{\text{Euler}_G(N_{p_{i,d}})} \right]\\
	&= \sum_{i=0}^n \frac{\lambda_i^s (Q_{d,k}(\lambda_i)-P_{d,k}(\lambda_i))}{\text{Euler}_G(N_{p_{i,0}})}\\
\end{align*}

The second line follows from the first because $Q$ satisfies the Euler condition, allowing $Q_{d,k}(\lambda_i + r\hbar)$ to be expressed in terms of $Q_{1,k}, \dots, Q_{d-1,k}$ and $Q_{r,0}$ whenever $0 < r < d$, and likewise for $P_{d,k}(\lambda_i+r\hbar)$.  By the inductive hypothesis these agree, and the only terms remaining in the difference are the $r=0$ and $r=d$ cases.  Furthermore, using the assumption $Q_0 = P_0$ when $r=d$,

\begin{align*}
Q_{d,k}(\lambda_i + d\hbar) &= \Omega(\lambda_i)^{-1} Q_{d,0}(\lambda_i + d\hbar) Q_{0,k}(\lambda_i)\\
	&=\Omega(\lambda_i)^{-1} P_{d,0}(\lambda_i + d\hbar) P_{0,k}(\lambda_i)\\
	&=P_{d,k}(\lambda_i + d\hbar), \\
\end{align*}

\noindent so this difference vanishes as well giving the final line.  Computing the normal bundle in the final line, we find that 

\begin{equation}
\int_{(N_d)_G} \kappa^s (Q_{d,k} - P_{d,k}) = \sum_{i=0}^n \frac{\lambda_i^s A_{i}(\hbar)}{\hbar^d}
\label{difference}
\end{equation}

\noindent where

\[
A_i(\hbar) = \frac{(-1)^d}{d!\prod_{j \neq i}(\lambda_i - \lambda_j)} \frac{Q_{d,k}(\lambda_i) - P_{d,k}(\lambda_i)}{\prod_{j \neq i} \prod_{m=1}^d(\lambda_i - (\lambda_j + m\hbar))}.
\]

We claim that $A_i$ is in fact polynomial in $\hbar$.  It is easy to see that $(0,1,\beta,\Omega)$-Euler data automatically satisfies the \emph{self-linking condition} that $Q_{d,k}(\lambda_i) = \beta(\lambda_j)^k Q_{d,0}(\lambda_i)$ at $\hbar = (\lambda_i - \lambda_j)/d$ for all $i$, $j \neq i$, $k$, and $d$.  It follows that at $\hbar = (\lambda_i - \lambda_j)/d$

\[
Q_{d,k}(\lambda_i) - P_{d,k}(\lambda_i) = \beta(\lambda_j)^k(Q_{d,0}(\lambda_i) - P_{d,0}(\lambda_i)) = 0,
\]

\noindent which cancels out some zeros of the denominator.  The other zeros at $\hbar = (\lambda_i - \lambda_j)/s$ where $0 < s < d$ cancel as well again by repeated use of the self-linking condition (geometrically if $Q_{d,k}$ and $Q_{s,k}$ are both multiples of $Q_{d,0}$ on covers of coordinate lines then they are multiples of each other).  We have shown that $A_i$ is indeed a polynomial in $\hbar$.  However, the left side of (\ref{difference}) is naturally a polynomial in $\hbar$.  The degree bound implies that

\[
\text{deg}_{\hbar} A_i \leq (n+1)d - 1 - nd = d-1
\]

\noindent Since $\hbar^d$ divides $A_i$ we conclude that $A_i$, and hence the left hand side of (\ref{difference}), always vanish.  The lemma then follows. \qed
\end{proof}

\section{Hypergeometric Data}

Throughout this section and the next, we will take $\beta \in H_T^*(\pro^n)$ to be $p$, the equivariant hyperplane class.  The goal is first to extend $(0,0)$-Euler data to $(0,1)$-Euler data in a natural way, and then to transform $(0,1)$-Euler data into ``linked" data so certain hypergeometric series associated to the data obey simple relations.  These notions will be made precise shortly.  Ultimately we prove Theorem \ref{maintheorem} showing that this can be done in a way that controls the growth of $\hbar$.

\subsection{An Extension Lemma}

Let $\mathcal{S}_0$ denote the set of all sequences $B = \{B_d\}$ such that $B_d \in H_T^*(\pro^n)(\hbar)$ for all $d$, where $\hbar$ is now a formal parameter.  The hypergeometric function associated to $B \in \mathcal{S}_0$ is a formal function taking values in $H^*_T(\pro^n) \otimes \C (\hbar)$ given by

\[
HG[B](t) = e^{-pt/\hbar}\left[ B_0 + \sum_{d = 1}^\infty e^{dt} \frac{B_d}{\prod_{l=0}^n \prod_{m=1}^d (p - \lambda_l - m\hbar)} \right].
\]

%

Let $I_d: \pro^n \hookrightarrow N_d$ be the equivariant embedding given by

\[
[a_0,\dots,a_n] \mapsto [a_0 w_1^d,\dots,a_n w_1^d].
\]

\noindent Also let $\mathcal{S}$ be the set of all sequences $Q = \{Q_d\}$ of equivariant classes with $Q_d \in \mathcal{R}_T H_G^*(N_d)$.  We then have a natural map $\mathcal{I}: \mathcal{S} \rightarrow \mathcal{S}_0$, defined via $(\mathcal{I}(Q))_d = I_d^*(Q_d)$ for every $d$.  With this notation, the hypergeometric series associated to  the height $\vec{k}$ sequence in $(u,v,p,\Omega)$-Euler data is given by 

\begin{equation}
\label{hyperdef}
HG[\mathcal{I}(Q_{\vec{k}})](t) = e^{-pt/\hbar}\left[ \Omega p^k  + \sum_{d = 1}^\infty e^{dt} \frac{I_d^*(Q_{d,\vec{k}})}{\prod_{l=0}^n \prod_{m=1}^d (p - \lambda_l - m\hbar)} \right].
\end{equation}

For the remainder of this section we will deal only with $(0,1)$-Euler data, and again adopt the notation $Q_k$ in place of $Q_{\vec{k}}$ for the data at a particular height.

\begin{lemma}[Extension Lemma]
There is a map $\rho: \mathcal{S} \rightarrow \mathcal{S}$ such that $\rho$ gives a natural extension of $(0,0,-,\Omega)$-Euler data $Q_0 \in \mathcal{S}$ to $(0,1,p,\Omega)$-Euler data $Q$ in such a way that

\[
Q_k = \rho^k(Q_0)
\]

\noindent for all $k > 0$ and with the property that 

\begin{equation}
HG[\mathcal{I}(Q_k)](t) = \left \{ -\hbar \frac{\partial}{\partial t} \right \}^k HG[\mathcal{I}(Q_0)](t).
\label{hyperdiff}
\end{equation}
\label{extensionlemma}
\end{lemma}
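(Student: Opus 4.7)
The plan: define $\rho: \mathcal{S} \to \mathcal{S}$ by multiplication by a single universal equivariant class, motivated by the termwise action of $-\hbar \partial/\partial t$ on the hypergeometric series. Since $(-\hbar \partial_t)(e^{-pt/\hbar} e^{dt}) = (p - d\hbar)\, e^{-pt/\hbar} e^{dt}$, applying this operator to $HG[\mathcal{I}(Q_0)](t)$ multiplies the $d$-th summand by the factor $(p - d\hbar)$. The class $\kappa - d\hbar \in H^*_G(N_d)$ has image $p - d\hbar$ under $I_d^*$, so I would set
\[
\rho(Q)_d := (\kappa - d\hbar)\, Q_d, \qquad Q_k := \rho^k(Q_0).
\]

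Next I would verify that $Q = \{Q_{d,k}\}$ is $(0,1,p,\Omega)$-Euler data. Condition (1) holds because $N_0 \cong \pro^n$ with $\kappa|_{N_0} = p$, so $Q_{0,k} = p^k \Omega$. For the Euler condition the crucial computation is
\[
(\kappa - d\hbar)\bigl|_{p_{i,r}} \;=\; \lambda_i + r\hbar - d\hbar \;=\; \lambda_i - (d-r)\hbar \;=\; (\kappa - (d-r)\hbar)\bigl|_{p_{i,0}},
\]
so the $k$-th power of the correction yields the same scalar whether restricted at $p_{i,r} \in N_d$ or at $p_{i,0} \in N_{d-r}$. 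Multiplying the $(0,0)$-Euler condition $\Omega(\lambda_i) Q_{d,0}(\lambda_i + r\hbar) = Q_{r,0}(\lambda_i + r\hbar) Q_{d-r,0}(\lambda_i)$ by this common factor yields
\[
\Omega(\lambda_i) Q_{d,k}(\lambda_i + r\hbar) = Q_{r,0}(\lambda_i + r\hbar) Q_{d-r,k}(\lambda_i),
\]
noting that $\vec{k}'' = \vec{0}$ for $(u,v) = (0,1)$, so only the factor $Q_{d-r,k}$ (not $Q_{r,0}$) picks up the correction.

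For the hypergeometric identity I would induct on $k$. The case $k = 0$ is trivial. Given the identity for $k - 1$, applying $-\hbar \partial_t$ to $HG[\mathcal{I}(Q_{k-1})](t)$ sends the constant term $p^{k-1}\Omega$ to $p^k \Omega$ and multiplies each $d \geq 1$ summand by $(p - d\hbar)$, which is precisely $I_d^*(\rho(Q)_{d,k-1}) = I_d^*(Q_{d,k})$. This matches $HG[\mathcal{I}(Q_k)](t)$ summand by summand.

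The main subtlety is that the single multiplicative correction $(\kappa - d\hbar)$ must simultaneously fulfill two independent demands: the Euler gluing at $p_{i,r}$ forces the correction to depend only on $d - r$, while the hypergeometric identity forces specifically a shift of $d\hbar$. These two requirements happen to be compatible precisely because of the cancellation $\lambda_i + r\hbar - d\hbar = \lambda_i - (d-r)\hbar$, which is what makes the natural definition of $\rho$ work.
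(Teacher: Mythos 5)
Your proposal is correct and is essentially the paper's argument: the paper defines the extension by its fixed-point restrictions, $Q_{d,k}(\lambda_i+r\hbar)=(\lambda_i-(d-r)\hbar)^k Q_{d,0}(\lambda_i+r\hbar)$, which is exactly your global class $(\kappa-d\hbar)^k Q_{d,0}$ restricted at $p_{i,r}$, and it likewise deduces $I_d^*(Q_{d,k})=(p-d\hbar)^k I_d^*(Q_{d,0})$ before verifying the hypergeometric identity by power-series manipulation. If anything, phrasing $\rho$ as multiplication by the class $\kappa-d\hbar$ makes polynomiality in $\hbar$ automatic, which the paper has to remark on separately.
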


\begin{proof}
Given $Q_0 = \{Q_{0,k}\}$, define $\rho^k(Q_0) = \{Q_{d,k}\}$ via

\[
Q_{d,k}(\lambda_i + r\hbar) = (\lambda_i - (d-r)\hbar)^k Q_{d,0}(\lambda_i + r\hbar).
\]

\noindent That this extension satisfies \ref{eulercondition} is easily checked, and as the restriction at each fixed point of $N_d$ is polynomial in $\hbar$ so is the class $Q_{d,k}$ itself.  Furthermore, $i^*_{q_i}I_d^*(Q_{d,k}) = Q_{d,k}(\lambda_i) = (\lambda_i - d\hbar)^k Q_{d,0}(\lambda_i)$ for every fixed point $q_i \in \pro^n$, from which we conclude

\[
I_d^*(Q_{d,k}) = (p-d\hbar)^kI_d^*(Q_{d,0}).
\]

\noindent Equation (\ref{hyperdiff}) is now verified by straightforward power series manipulation. \qed

\end{proof}

\subsection{Mirror Transformations}

Lemma \ref{extensionlemma} implies we may always extend some initial $(0,0,\Omega)$-Euler data to $(0,1,p,\Omega)$-Euler data in a simple way.  However, hypergeometric data associated to the extension will behave badly in that it grows in powers of $\hbar$ as the height $k$ increases.  It turns out the correct way to control the hypergeometric data is transform Euler data into new ``linked" data.  More generally, one may wish to manipulate $(u,v)$-Euler data.

We now give some natural extensions to definitions found in \cite{lly:mirror1}:

\begin{definition} Two $(u,v,\beta,\Omega)$-Euler data $P$ and $Q$ are called \emph{linked} if $Q_0$ and $P_0$ are linked $(0,0,\Omega)$-Euler data, meaning that $Q_{d,0}(\lambda_i)$ and $P_{d,0}(\lambda_i)$ agree at $\hbar = (\lambda_i - \lambda_j)/d$ for all $i,j$, and $d$.
\end{definition}


Let $\mathcal{A}^{\Omega}$ denote the set of all $(u,v,p,\Omega)$-Euler data.


\begin{definition} A \emph{mirror transformation} on $(u,v,\beta,\Omega)$-Euler data is an invertible map $\mu: \mathcal{A}^{\Omega} \rightarrow \mathcal{A}^{\Omega}$ so that $\mu(Q)$ is linked to $Q$ for all $Q \in \mathcal{A}^{\Omega}$.  We will sometimes deal with mirror transformations that alter the Euler data at one particular height $\vec{k}$ while fixing all other heights; in this case, we call $\mu$ a \emph{mirror transformation of height $\vec{k}$}.  Clearly the composition of mirror transformations of different heights will be a mirror transformation.
\end{definition}

Lemmas \ref{trans1}, \ref{trans2}, and \ref{trans3} below describe specific mirror transformations on $(u,v,p,\Omega)$-Euler data that will be needed to establish Theorem \ref{maintheorem} or in the examples.  The existence of a mirror transformation described at each height by a relation on hypergeometric data is established by first showing that there is a well-defined image sequence in $\mathcal{S}_0$ satisfying the relation, then that this sequence lifts well to a sequence in $\mathcal{S}$ so the lift satisfies the Euler condition and is polynomial in $\hbar$, and of course is linked to the original data.

If one ignores the higher height data, Lemmas \ref{trans1} and \ref{trans2} exactly reduce to Lemma 2.15 of \cite{lly:mirror1}.  Their proofs are essentially the same argument with small modifications; we produce those arguments for Lemma \ref{trans1}; Lemma \ref{trans2} is simpler and left as an exercise.

\begin{lemma}
There exists a mirror transformation $\mu: \mathcal{A}^{\Omega} \rightarrow \mathcal{A}^{\Omega}$ so that for any $g \in e^t\mathcal{R}_T[[e^t]]$,

\[
HG[\mathcal{I}(\mu(Q)_{\vec{k}})](t) = HG[\mathcal{I}(Q_{\vec{k}})](t + g)
\]
\noindent for all $Q \in \mathcal{A}^{\Omega}$ and \emph{all} heights ${\vec{k}}$.
\label{trans1}
\end{lemma}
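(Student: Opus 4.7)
The plan is to follow the three-step strategy outlined in the paragraph preceding the lemma: first produce a candidate image in $\mathcal{S}_0$ by reading off coefficients of the shifted hypergeometric series, then lift it to a sequence in $\mathcal{S}$ satisfying the Euler condition and polynomial in $\hbar$, and finally verify linking with $Q$. Invertibility will follow formally from the invertibility of the substitution $t \mapsto t + g$.

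First, I would write $g = \sum_{d \geq 1} g_d e^{dt}$ with $g_d \in \mathcal{R}_T$ and expand
\[
HG[\mathcal{I}(Q_{\vec{k}})](t + g) = e^{-pt/\hbar} e^{-pg/\hbar} \Bigl[\Omega p^{|\vec{k}|} + \sum_{d \geq 1} e^{dt} e^{dg} \frac{I_d^*(Q_{d,\vec{k}})}{\prod_{l=0}^n\prod_{m=1}^d(p - \lambda_l - m\hbar)}\Bigr]
\]
as a formal power series in $e^t$. Matching coefficients of $e^{d't}$ against the prescribed form of $HG[\mathcal{I}(\mu(Q)_{\vec{k}})](t)$ uniquely determines each $I_{d'}^*(\mu(Q)_{d',\vec{k}})$ as an explicit combination of the classes $I_d^*(Q_{d,\vec{k}})$ for $d \leq d'$, together with an $\Omega p^{|\vec{k}|}$ contribution from the constant term. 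Since the construction is carried out height-by-height and the height $\vec{k}$ enters only through the scalar $p^{|\vec{k}|}$ in the constant piece, the argument of LLY's Lemma 2.15 establishing $\hbar$-polynomiality of the resulting $I_{d'}^*(\mu(Q)_{d',\vec{k}})$ applies with no essential modification.

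Next, I would lift each $I_{d'}^*(\mu(Q)_{d',\vec{k}})$ to a class $\mu(Q)_{d',\vec{k}} \in H^*_G(N_{d'})$ by specifying its values at the fixed points $p_{i,r}$ of $N_{d'}$ for $0 \leq r \leq d'$. The values on one boundary section are prescribed directly by the equation $\mathcal{I}(\mu(Q)_{\vec{k}}) = I_{d'}^*(\mu(Q)_{d',\vec{k}})$. The values on the opposite boundary section are supplied via the $\hbar \mapsto -\hbar$ symmetry coming from the involution $w_0 \leftrightarrow w_1$ on $\pro^1$, which exchanges $p_{i,r}$ with $p_{i,d'-r}$; this symmetry is preserved by the shift because $g$ is independent of $\hbar$. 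The remaining intermediate values are then forced by the Euler condition (\ref{eulercondition}) from inductively constructed lower-degree data, and a standard interpolation argument together with the $\hbar$-polynomiality established above shows that this fixed-point data assembles into a single well-defined equivariant class on $N_{d'}$.

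Satisfaction of (\ref{eulercondition}) by $\mu(Q)$ at interior fixed points is then automatic by construction, and the height-mixing relation between $\vec{k}'$ and $\vec{k}''$ follows from the factorization $p^{|\vec{k}|} = p^{|\vec{k}'|} p^{|\vec{k}''|}$ of the constant term. Linking of $\mu(Q)$ and $Q$ reduces to the height-$\vec{0}$ case, which is LLY's Lemma 2.15 verbatim since the linking condition is tested only on height-$\vec{0}$ data. Finally, the substitution $t \mapsto t + g$ admits a unique formal inverse of the same form in $e^t\mathcal{R}_T[[e^t]]$, giving $\mu^{-1}$. The principal obstacle throughout is the $\hbar$-polynomiality of the shifted coefficients together with the compatibility between the two boundary-section prescriptions under the Euler condition; once these are verified as in \cite{lly:mirror1}, the rest amounts to careful bookkeeping on top of the unmarked argument.
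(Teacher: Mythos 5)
Your overall outline coincides with the paper's: expand $HG[\mathcal{I}(Q_{\vec k})](t+g)$, match coefficients of $e^{dt}$ to get a candidate sequence in $\mathcal{S}_0$, lift it to $N_d$ by prescribing fixed-point values (the paper does this with the Lagrange map (\ref{lagrangelifting}), which is exactly your ``boundary values plus Euler condition'' prescription, with the $\hbar\mapsto-\hbar$ conjugation entering through the property $Q_{d,0}(\lambda_i+d\hbar)=\overline{Q_{d,0}(\lambda_i)}$ of the \emph{height-zero} data), check linking at height $\vec 0$, and invert the substitution formally. The genuine gap is in the step where you assemble this fixed-point data into a class in $\mathcal{R}_T H^*_G(N_d)$. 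The interior values produced by the Euler condition (\ref{eulercondition}) involve $\Omega(\lambda_i)^{-1}$ and conjugated lower data, so a priori the lift lives only in the localized ring; $\hbar$-polynomiality of the restriction $I_d^*(\mu(Q)_{d,\vec k})$, i.e.\ of the values at the $n+1$ points $p_{i,0}$ alone, does not control the remaining $(n+1)d$ fixed-point values, and no ``standard interpolation argument'' closes this. What the paper actually does (following the mechanism of \cite{lly:mirror1}) is prove polynomiality of the \emph{whole} lift by pairing it with $e^{\kappa(t-\tau)/\hbar}$ and invoking the localization identity (\ref{221analog}), whose right-hand side couples the conjugated height-$0$ hypergeometric series with the height-$\vec k$ one; combined with the decomposition $g=g_++g_-$ this gives $\int_{(N_d)_G}\kappa^s\,\tilde Q_{d,\vec k}\in\mathcal{R}_T$ for all $s$, and hence that the coefficients of the lift have no $\hbar$-denominators. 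That mixed-height bilinear identity is the one modification of the unmarked argument that matters, and it is absent from your proposal.

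Two of your supporting claims are also off. The assertion that ``the height $\vec k$ enters only through the scalar $p^{|\vec k|}$ in the constant piece'' is false for $d\geq 1$: height-$\vec k$ data is a genuine deformation of height-$0$ data, not $p^{|\vec k|}$ times it (if it were, Theorem \ref{maintheorem} would be unnecessary), so you cannot cite LLY's Lemma 2.15 ``with no essential modification'' on that basis; the correct reason the argument extends is precisely the mixed-height version of (\ref{221analog}) above. Likewise, the far-boundary values at height $\vec k\neq\vec 0$ are not supplied by an $\hbar\mapsto-\hbar$ symmetry of the height-$\vec k$ data itself (and the shifted series is not conjugation-symmetric, which is why the paper splits $g$ into $g_\pm$); they are dictated by the Euler condition against the conjugated height-$0$ data, which is what the Lagrange lift builds in.
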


\begin{lemma}
There exists a mirror transformation $\nu: \mathcal{A}^{\Omega} \rightarrow \mathcal{A}^{\Omega}$ of \emph{arbitrary} height ${\vec{k}}$ such that for any $f \in e^t\mathcal{R}_T[[e^t]]$,

\[
HG[\mathcal{I}(\nu(Q)_{\vec{k}})](t) = e^{f / \hbar}HG[\mathcal{I}(Q_{\vec{k}})](t).
\]

\label{trans2}
\end{lemma}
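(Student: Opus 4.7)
The plan follows the three-step strategy sketched just before Lemma~\ref{trans1}: first construct the image sequence in $\mathcal{S}_0$ forced by the hypergeometric identity; next lift it to an equivariant class on each $N_d$ that is polynomial in $\hbar$ and compatible with the Euler condition; and finally verify linkage. Invertibility will come for free, since $\nu^{-1}$ is the mirror transformation of height $\vec{k}$ built from $-f$.

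For step one, expand $f = \sum_{d\ge 1} f_d\, e^{dt}$ and $e^{f/\hbar} = \sum_{d\ge 0} F_d(\hbar)\, e^{dt}$, with $F_0 = 1$ and each $F_d \in \mathcal{R}_T[\hbar^{-1}]$ a Laurent polynomial in $\hbar$ of pole order at most $d$ at $\hbar = 0$. Writing $A_0 = \Omega\, p^{|\vec{k}|}$ and $A_d = I_d^*(Q_{d,\vec{k}})$ for $d \ge 1$, the only choice of $B' \in \mathcal{S}_0$ satisfying $HG[B'](t) = e^{f/\hbar}\, HG[\mathcal{I}(Q_{\vec{k}})](t)$ is
\[
B_d' \;=\; \sum_{r = 0}^d F_r(\hbar)\, A_{d-r}\, \prod_{l=0}^n \prod_{m = d-r+1}^d (p - \lambda_l - m\hbar),
\]
obtained by matching coefficients of $e^{dt}$ and clearing the common denominator $\prod(p - \lambda_l - m\hbar)$. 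This $B_d'$ lies in $H_T^*(\pro^n)(\hbar)$ as required.

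For step two, declare $I_d^*(\nu(Q)_{d,\vec{k}}) := B_d'$, which prescribes $\nu(Q)_{d,\vec{k}}$ at the $n+1$ fixed points in the image of $I_d$; those values are polynomial in $\hbar$ thanks to the clean cancellation that the order-$r$ pole of $F_r$ at $\hbar = 0$ is precisely absorbed by the $l = i$ factor $\prod_{m=d-r+1}^d(-m\hbar) = (-\hbar)^r\, d!/(d-r)!$. Extend to the remaining $p_{i,r}$ with $r > 0$ via the Euler condition~(\ref{eulercondition}): because $\nu$ fixes every height other than $\vec{k}$, this invokes only unaltered classes $Q_{r,\vec{k}''}$ and $Q_{d-r,\vec{k}'}$ whenever $\vec{k}', \vec{k}'' \ne \vec{k}$, and reduces to a straightforward induction on $d$ in the exceptional $(0,1)$- and $(1,0)$-cases where one of $\vec{k}', \vec{k}''$ equals $\vec{k}$. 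The main obstacle is then showing that these prescribed values really assemble into a polynomial-in-$\hbar$ class of $\mathcal{R}_T H_G^*(N_d)$ rather than a merely rational one; the plan is to mirror the corresponding polynomiality check in Lemma~2.15 of \cite{lly:mirror1}, using that $\nu(Q)_{d,\vec{k}} - Q_{d,\vec{k}}$ is entirely controlled by its $r = 0$ restrictions through the Euler recursion and already has the explicit polynomial form derived above, making a polynomial representative modulo the defining relation of $N_d$ transparent.

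Step three on linkage splits cleanly. If $\vec{k} \ne \vec{0}$ the height-$\vec{0}$ data is untouched, so linkage is automatic. If $\vec{k} = \vec{0}$, specialize $\hbar = (\lambda_i - \lambda_j)/d$ in the formula for $B_d'(\lambda_i)$: every $r \ge 1$ summand contains the $l = j$, $m = d$ factor $\lambda_i - \lambda_j - d\hbar$, which vanishes, so only the $r = 0$ term $F_0\, Q_{d,\vec{0}}(\lambda_i) = Q_{d,\vec{0}}(\lambda_i)$ survives. This lines up with the author's remark that Lemma~\ref{trans2} is simpler than Lemma~\ref{trans1}: only a single height is moved, and the $r = 0$ modification has the transparent pole-canceling structure noted above.
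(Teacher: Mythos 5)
Your three-step outline is the same route the paper takes (it proves Lemma \ref{trans1} in detail and declares Lemma \ref{trans2} ``similar''): your $B_d'$ is the correct image sequence, your extension of the values to the $p_{i,r}$ with $r>0$ through the Euler condition is exactly the Lagrange lift (\ref{lagrangelifting}), and your linkage check is fine. The genuine gap is in your step two, which is where all the content of the lemma sits. Knowing that every fixed-point restriction of the lifted class is polynomial in $\hbar$ does \emph{not} put the class in $\mathcal{R}_T H_G^*(N_d)$: reconstructing a class from its restrictions brings in the equivariant Euler classes of the normal bundles at the $p_{i,r}$, which carry $\hbar$ in the denominator, so there is no ``transparent polynomial representative.'' Concretely, take the $(0,1,p,1)$-Euler data $Q_{d,0}=1$, $Q_{d,1}=\kappa-d\hbar$ and $f=ce^{t}$. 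Your recipe forces, in degree $1$, the restrictions $(\lambda_i-\hbar)-c\lambda_i\prod_{j\neq i}(\lambda_i-\lambda_j-\hbar)$ at $p_{i,0}$ and (via the $r=d$ instance of (\ref{eulercondition}), the one used in the proof of Lemma \ref{uniquenesslemma}) $\lambda_i$ at $p_{i,1}$ --- each polynomial in $\hbar$ --- yet the unique class on $N_1$ with these restrictions is $(\kappa-\hbar)+\frac{c}{\hbar}\,\kappa\prod_{l=0}^{n}(\kappa-\lambda_l-\hbar)$, whose $\hbar$-denominator does not cancel. So the cancellation of the order-$r$ pole of $F_r$ against $(-\hbar)^r$ controls only the $r=0$ restriction, not the class, and no argument at the level of fixed-point values alone can finish the proof.

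What the paper actually does (for Lemma \ref{trans1}, and by the same token here) is prove membership in $\mathcal{R}_T H_G^*(N_d)$ globally through the integral identity (\ref{221analog}): one shows the generating series $\sum_d e^{d\tau}\int_{(N_d)_G}e^{\kappa(t-\tau)/\hbar}\tilde Q_{d,\vec k}$ lies in $\mathcal{R}_T[[e^{\tau},(t-\tau)/\hbar]]$, deduces $\int_{(N_d)_G}\kappa^s\tilde Q_{d,\vec k}\in\mathcal{R}_T$ for all $s$, and then reads off that the $\kappa$-coefficients of $\tilde Q_{d,\vec k}$ lie in $\mathcal{R}_T[\hbar]$. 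Your sketch needs this step, and for Lemma \ref{trans2} it is genuinely delicate: only the height-$\vec k$ factor is multiplied by $e^{f/\hbar}$, while the conjugated height-$0$ factor in (\ref{221analog}) is untouched, so the $\hbar^{-1}$'s do not cancel automatically the way they do in the unmarked transformation of Lemma 2.15 of \cite{lly:mirror1} (where both factors transform and conjugation supplies the compensating $e^{-f/\hbar}$). The example above shows this is not a removable technicality: one must exploit the particular multipliers that actually occur --- in the application inside Theorem \ref{maintheorem} the factor $e^{-f/\hbar}=({y^0_{k,1}}'(t))^{-1}$ is $\hbar$-free, so the right-hand side of (\ref{221analog}) visibly stays in $\mathcal{R}_T[[e^{\tau},(t-\tau)/\hbar]]$ --- or otherwise restrict the class of $f$ for which the statement is asserted. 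As written, your step two does not close, and it is precisely the step the lemma is about.
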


\begin{proof}
To prove Lemma \ref{trans1}, let $\mathcal{I}(Q_{\vec{k}}) = B_{\vec{k}} \in \mathcal{S}_0$ so that

\[
HG[B_{\vec{k}}](t + g) = e^{-pt/\hbar}e^{-pg/\hbar} \sum_{d = 0}^\infty e^{dt}e^{dg} \frac{B_{d,{\vec{k}}}}{\prod_{l=0}^n \prod_{m=1}^d (p - \lambda_l - m\hbar)}.
\]

Expand $e^{dg} = \sum_{s \geq 0} g_{d,s} e^{st}$ and $e^{-pg/\hbar} = \sum_{s \geq 0} g'_s e^{st}$, where $g_{d,s} \in \mathcal{R}_T$ and $g'_{s} \in \mathcal{R}_T[p/\hbar]$.  It is straightforward to find that 

\[
\tilde B_{d,{\vec{k}}} = B_{d,{\vec{k}}}' + \sum_{r=0}^{d-1} g_{d-r}'B_{r,{\vec{k}}}'\prod_{l=0}^n \prod_{m=r+1}^d (p - \lambda_l - m\hbar),
\]

\noindent where

\[
B_{d,{\vec{k}}}' = B_{d,{\vec{k}}} + \sum_{r=0}^{d-1} g_{r,d-r}B_{r,{\vec{k}}}\prod_{l=0}^n \prod_{m=r+1}^d (p - \lambda_l - m\hbar),
\]

\noindent defines a unique $\tilde B_{\vec{k}} = \{\tilde B_{d,{\vec{k}}}\}$ such that 

\[
HG[\tilde B_{\vec{k}}](t) = HG[B_{\vec{k}}](t + g).
\]

For each ${\vec{k}}$, we want to lift $\tilde B_{\vec{k}}$ to a sequence $\tilde Q_{\vec{k}} = \{ \tilde Q_{d,{\vec{k}}} \}$ with $\tilde Q_{d,{\vec{k}}} \in \mathcal{R}_T H_G^*(N_d)$ in such a way that $\tilde Q_{\vec{k}}$ is the height ${\vec{k}}$ data in $(u,v,p,\Omega)$-Euler data $\tilde Q$ linked to $Q$.  We define the \emph{Lagrange map} $\mathcal{L}$ by

\begin{equation}
(\mathcal{L}(B_{\vec{k}}))_d(\lambda_i + r\hbar) = \Omega(\lambda_i)^{-1} \overline{B_{r,0}(\lambda_i)} B_{d-r,{\vec{k}}}(\lambda_i)
\label{lagrangelifting}
\end{equation}

\noindent for each $k \geq 0$.  Here overline denotes the conjugate map on $\pro^n$ sending $\hbar$ to $-\hbar$ (and more generally on $N_d$) studied in \cite{lly:mirror1}.  The relevant property of the conjugate map here is that $Q_{d,0}(\lambda_i + d\hbar) = \overline{Q_{d,0}(\lambda_i)}$, from which one sees that the image of the Lagrange map satisfies the Euler condition \ref{eulercondition}.  One may check that $\mathcal{I} \circ \mathcal{L} = \text{id}_{\mathcal{S}_0}$, and that $\mathcal{L} \circ \mathcal{I} = \text{id}_{\mathcal{S}}$ when restricted to Euler data.  Define $\tilde Q_k$ to be $\mathcal{L}(\tilde B_k)$.  It is clear from the definition of $\tilde B_{d,\vec{k}}$ that $\tilde Q_{d,0}(\lambda_i) = \tilde B_{d,0}(\lambda_i)$ agrees with $Q_{d,0}(\lambda_i) = B_{d,0}(\lambda_i)$ at $\hbar = (\lambda_i - \lambda_j)/d$, so $\tilde Q$ is linked to $Q$.  We now need only to show that $\tilde Q_{d,k} \in \mathcal{R}_T H_G^*(N_d)$.

Broadly, if $Q$ is $(u,v,p,\Omega)$-Euler data, then multiplying both sides of (\ref{lagrangelifting}) by the identity

\begin{align*}
e^{d\tau} \frac{e^{(\lambda_i + r \hbar)(t-\tau)/\hbar}}{\prod_{j=0}^n \prod_{m=0, (j,m) \neq (i,r)}^d (\lambda_i + r\hbar - \lambda_j - m\hbar)} =& \frac{1}{\prod_{j \neq i}(\lambda_i - \lambda_j)} \times \frac{e^{rt}}{\prod_{j=0}^n \prod_{m=1}^r (\lambda_i - \lambda_j + m\hbar)}\\
& \times e^{\lambda_i (t-\tau)/\hbar} \frac{e^{(d-r)\tau}}{\prod_{j=0}^n \prod_{m=1}^r (\lambda_i - \lambda_j - m\hbar)}
\end{align*}

\noindent and summing over $0 \leq i \leq n$, $0 \leq r \leq d$, and then finally over $d = 0$ to $\infty$ yields (via localization) the identity

\begin{equation}
\sum_{d \geq 0} e^{d\tau} \int_{(N_d)_G} e^{\kappa (t-\tau)/\hbar} Q_{d,\vec{k}} = \int_{(\pro^n)_T} (\Omega^{-1} \overline{HG[B_0](t)} HG[B_{\vec{k}}](\tau))
\label{221analog}
\end{equation}

\noindent with $B_0 = \mathcal{I}(Q_0)$ and $B_{\vec{k}} = \mathcal{I}(Q_{\vec{k}})$.  Equation (\ref{221analog}) is generally useful in establishing that lifts under the Lagrange map are polynomial in $\hbar$.

In the present case, \ref{221analog} gives

\begin{align*}
\sum_{d \geq 0} e^{d\tau} &\int_{(N_d)_G} e^{\kappa(t - \tau)/\hbar} \tilde Q_{d,\vec{k}} \\
&= \int_{(\pro^n)_T} \Omega^{-1} \overline{HG[\mathcal{I}(\mu(Q)_0)](t)} HG[\mathcal{I}(\mu(Q)_{\vec{k}})](\tau)\\
&= \int_{(\pro^n)_T} \Omega^{-1} \overline{HG[\mathcal{I}(Q_0)](t + g(e^t))} HG[\mathcal{I}(Q_{\vec{k}})](\tau + g(e^{\tau})) \\
&= \sum_{d \geq 0} e^{d\tau}  \int_{(N_d)_G} e^{\kappa(t + \bar g(e^t) - \tau - g(e^{\tau}))/\hbar}Q_{d,\vec{k}}\\
\end{align*}

Decompose $g = g_+ + g_-$, where $\bar g_{\pm} = \pm g_{\pm}$, and set $q = e^{\tau}$ and $\zeta = (t-\tau)/\hbar$, so that $g_+(qe^{\zeta\hbar}) - g_+(q) \in \hbar\mathcal{R}_T[[q, \zeta]]$.  Moreover, $g_-(q)$, $g_-(qe^{\zeta\hbar}) \in \hbar\mathcal{R}_T[[q, \zeta]]$ naturally.  Since $Q_{d,\vec{k}} \in \mathcal{R}_TH^*_G(N_d)$, the final summation above lies in $\mathcal{R}_T[[q, \zeta]]$, and so too much the initial summation.  It follows that 

\[
\int_{(N_d)_G} \kappa^s \tilde Q_{d,\vec{k}} \in \mathcal{R}_T
\]

\noindent for all $s \geq 0$.  A priori $\tilde Q_{d,\vec{k}} = a_N\kappa^N + ... + a_0$, with $a_i \in \mathcal{R}_G$, and pairing this expression with various powers of $\kappa$ and integrating over $(N_d)_G$ gives $a_i \in \mathcal{R}_T$.   Thus $\tilde Q_{d,\vec{k}} \in \mathcal{R}_TH^*_G(N_d)$ is $(u,v)$-Euler data and we may define the mirror transformation $\mu$ via $\mu(Q) = \tilde Q$. 

The proof of Lemma \ref{trans2} is carried out by a similar argument.\qed
\end{proof}

These two lemmas describe mirror transformations that generalize those in \cite{lly:mirror1} used to manipulate the first two terms in the $\hbar^{-1}$-expansion of hypergeometric data so a uniqueness result applies.  We will need a new class of mirror transformations that allow one to further manipulate the degree $0$ term of the $\hbar^{-1}$-expansion of higher derivatives of hypergeometric data in such a way as to eliminate all the purely equivariant terms that appear after applying Lemma \ref{extensionlemma}.  The following lemma provides that new class of transformations:

\begin{lemma}
There exists a mirror transformation $\eta: \mathcal{A}^{\Omega} \rightarrow \mathcal{A}^{\Omega}$ of arbitrary height $\vec{k}$ so that for any $f \in e^t\mathcal{R}_T[[e^t]]$ and $\vec{k}'$, 

\[
HG[\mathcal{I}(\eta(Q)_{\vec{k}})](t) = HG[\mathcal{I}(Q_{\vec{k}})](t) + f \cdot HG[\mathcal{I}(Q_{\vec{k}'})](t)
\]

\noindent for all $Q \in \mathcal{A}^{\Omega}$.
\label{trans3}
\end{lemma}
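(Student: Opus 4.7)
I would parallel the argument for Lemma \ref{trans1}: first construct a candidate sequence $\tilde B_{\vec{k}} \in \mathcal{S}_0$ satisfying the prescribed hypergeometric identity, then lift it via the Lagrange map $\mathcal{L}$, and finally verify that the lift is polynomial in $\hbar$ so that it defines genuine $(u,v,p,\Omega)$-Euler data.

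Expanding $f = \sum_{s \geq 1} f_s e^{st}$ with $f_s \in \mathcal{R}_T$ and comparing coefficients of $e^{dt}$ in the target identity, one is forced to set
\[
\tilde B_{d,\vec{k}} = B_{d,\vec{k}} + \sum_{r=0}^{d-1} f_{d-r}\, B_{r,\vec{k}'}\, \prod_{l=0}^n \prod_{m=r+1}^d (p - \lambda_l - m\hbar),
\]
where $B_{d,\vec{k}} = \mathcal{I}(Q_{\vec{k}})_d$ and $B_{r,\vec{k}'} = \mathcal{I}(Q_{\vec{k}'})_r$. Since $f$ has no constant term, $\tilde B_{d,0} = B_{d,0}$; I would therefore define $\eta(Q)$ to agree with $Q$ at every height other than $\vec{k}$, and to take the value $\mathcal{L}(\tilde B_{\vec{k}})$ at height $\vec{k}$. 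The Euler condition (\ref{eulercondition}) is then built into the Lagrange map via $Q_{d,0}(\lambda_i + d\hbar) = \overline{Q_{d,0}(\lambda_i)}$, exactly as in Lemma \ref{trans1}, and because the height-zero data is untouched, linking of $\eta(Q)$ to $Q$ is automatic.

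The main obstacle will be to show $\tilde Q_{d,\vec{k}} := \mathcal{L}(\tilde B_{\vec{k}})_d$ is polynomial in $\hbar$, but I expect the same mechanism used in Lemma \ref{trans1} via identity (\ref{221analog}) to carry through:
\[
\sum_{d \geq 0} e^{d\tau} \int_{(N_d)_G} e^{\kappa(t-\tau)/\hbar} \tilde Q_{d,\vec{k}} = \int_{(\pro^n)_T} \Omega^{-1} \overline{HG[\mathcal{I}(Q_0)](t)}\left(HG[\mathcal{I}(Q_{\vec{k}})](\tau) + f(e^{\tau})\, HG[\mathcal{I}(Q_{\vec{k}'})](\tau)\right).
\]
Since $Q_{d,\vec{k}}$ and $Q_{d,\vec{k}'}$ already lie in $\mathcal{R}_T H_G^*(N_d)$ and $f \in e^t \mathcal{R}_T[[e^t]]$, setting $q = e^{\tau}$ and $\zeta = (t-\tau)/\hbar$ would put the right-hand side into $\mathcal{R}_T[[q,\zeta]]$. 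Extracting $\zeta^s q^d$ coefficients then gives $\int_{(N_d)_G} \kappa^s \tilde Q_{d,\vec{k}} \in \mathcal{R}_T$ for all $s \geq 0$, and pairing $\tilde Q_{d,\vec{k}}$ against the various powers of $\kappa$ forces each coefficient in the $\kappa$-basis of $H_G^*(N_d)$ to lie in $\mathcal{R}_T$. The one subtlety to watch is that the correction couples heights $\vec{k}$ and $\vec{k}'$, but this should cause no new difficulty: the Lagrange map always pairs a given height against \emph{height-zero} conjugate data, and the two hypergeometric contributions on the right add linearly and may be analyzed independently.
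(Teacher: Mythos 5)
Your construction is essentially the paper's own proof: the same explicit formula for $\tilde B_{d,\vec{k}}$ (your sum over $r$ is the paper's sum over $s=d-r$), the same Lagrange lift verifying the Euler condition via $Q_{d,0}(\lambda_i+d\hbar)=\overline{Q_{d,0}(\lambda_i)}$, and the same use of identity (\ref{221analog}) together with $f\in e^t\mathcal{R}_T[[e^t]]$ and the $\kappa$-pairing argument to conclude $\eta(Q)_{d,\vec{k}}\in\mathcal{R}_TH^*_G(N_d)$. The only caveat is that your linking argument (``height zero untouched'') presumes $\vec{k}\neq\vec{0}$; for the case $\vec{k}=\vec{0}$ one notes instead, as the paper does, that each correction term carries the factor $\prod_{l}\prod_{m=d-s+1}^{d}(p-\lambda_l-m\hbar)$, so $\tilde B_{d,\vec{0}}(\lambda_i)$ and $B_{d,\vec{0}}(\lambda_i)$ still agree at $\hbar=(\lambda_i-\lambda_j)/d$.
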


\begin{proof}
Write $\mathcal{I}(Q_{\vec{k}}) = B_{\vec{k}} = \{ B_{d,\vec{k}} \}$ and likewise $\mathcal{I}(Q_{\vec{k}'}) = B_{\vec{k}'} = \{B_{d,\vec{k}'} \}$.  We need to show that there is a sequence $\tilde B_{\vec{k}} = \{ \tilde B_{d,\vec{k}} \} \in \mathcal{S}_0$ so that

\[
HG[\tilde B_{\vec{k}}](t) = HG[B_{\vec{k}}](t) + f \cdot HG[\mathcal{I}(Q_{\vec{k}'})](t)
\]

\noindent and that this sequence lifts well under the Lagrange map to linked $(u,v)$-Euler data.

It is straightforward to calculate that if we write $f(t) = \sum_{s \geq 1} f_s(t) e^{st}$, the desired sequence $\tilde B_k \in \mathcal{S}_0$ is given by

\[
\tilde B_{d,\vec{k}} = B_{d,\vec{k}} + \sum_{s=1}^{d} f_s(t) B_{d-s,\vec{k}'} \prod_{l=0}^n \prod_{m=d-s+1}^d (p - \lambda_l - m\hbar).
\]

\noindent The Lagrange lift $\mathcal{L}(\tilde B_{\vec{k}})$, given as before by 

\[
(\mathcal{L}(\tilde B_{\vec{k}}))_d(\lambda_i + r\hbar) = \Omega(\lambda_i)^{-1} \overline{B_{r,0}(\lambda_i)} \tilde B_{d-r,\vec{k}}(\lambda_i)
\]

\noindent will satisfy the Euler condition (\ref{eulercondition}) by design, and will be linked to the original data as $\tilde B_{d,\vec{k}}(\lambda_i)$ and $B_{d,\vec{k}}(\lambda_i)$ agree at $\hbar = (\lambda_i - \lambda_j)/d$ for all $i,j$, and $d$.  Define $\eta(Q)_{\vec{k}} = \mathcal{L}(B_{\vec{k}})$.  We need to check that that $\nu(Q)_{\vec{k}}$ is in fact in $\mathcal{R}_T H_G^*(N_d)$.  Applying (\ref{221analog}), we have 

\begin{align*}
\sum_{d \geq 0} e^{d\tau} &\int_{(N_d)_G} e^{\kappa(t - \tau)/\hbar} (\eta(Q)_{d,\vec{k}}) \\
&= \int_{(\pro^n)_T} \Omega^{-1} \overline{HG[\mathcal{I}(\nu(Q)_0)](t)} HG[\mathcal{I}(\eta(Q)_{\vec{k}})](\tau)\\
&= \int_{(\pro^n)_T} \Omega^{-1} \overline{HG[\mathcal{I}(Q_0)](t)} \left( HG[\mathcal{I}(Q_{\vec{k}})](\tau) + f \cdot HG[\mathcal{I}(Q_{\vec{k}'})](\tau) \right)\\
&= \sum_{d \geq 0} e^{d\tau}  \int_{(N_d)_G} e^{\kappa(t - \tau)/\hbar} \left[ Q_{d,\vec{k}} + f \cdot Q_{d,\vec{k}'} \right] \\
\end{align*}

\noindent Both $Q_{d,\vec{k}}$ and $Q_{d,\vec{k}'}$ are in $\mathcal{R}_TH^*_G(N_d)$ as $Q$ is Euler data.  An argument similar to that of Lemma \ref{trans1} shows that $\nu(Q)_{d,\vec{k}}$ must also be in $\mathcal{R}_TH^*_G(N_d)$.
\qed

\end{proof}

\noindent We now have all the ingredients necessary to prove:

\begin{theorem}  Suppose that $P_0$ is $(0,0,\Omega)$-Euler data whose hypergeometric data has series expansion in $\hbar^{-1}$ of the form 

\[
HG[\mathcal{I}(P_0)](t) = \Omega \left[1 + \sum_{q=1}^{\infty} \sum_{r=0}^q (-1)^q y^r_{0,q}(t) p^{q-r} \hbar^{-q} \right],
\]

\noindent where $y^r_{0,q}(t)$ are degree-$r$ symmetric polynomials in $\{\lambda_i\}$.  Then $P_0$ may be extended to $(0,1,p,\Omega)$-Euler data $P$ in such a way that the hypergeometric data at each height $k$ has series expansion of the form

\[
HG[\mathcal{I}(P_k)](t) = \Omega p^k \left[1 + \sum_{q=1}^{\infty} \sum_{r=0}^{q+k} (-1)^q y^r_{k,q}(t) p^{q-r} \hbar^{-q} \right],
\]

\noindent where $y^r_{k,q}(t)$ are degree-$r$ symmetric polynomials in $\{\lambda_i\}$ and the $y^0_{i,q}(t)$ are recursively defined by

\[
y^0_{i,q}(t) = \frac{{y^0_{i-1,q+1}}'(t)}{{y^0_{i-1,1}}'(t)}
\]

\noindent for $1 \leq i \leq q$.
\label{maintheorem}
\end{theorem}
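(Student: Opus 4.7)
The plan is to construct the extension $P$ by induction on the height $k$, combining the extension map $\rho$ of Lemma \ref{extensionlemma} with height-$k$ mirror transformations from Lemma \ref{trans3}. The base case $k = 0$ is given by hypothesis.

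For the inductive step, assume $P_0, \ldots, P_{k-1}$ have been constructed so that each $HG[\mathcal{I}(P_j)]$ ($j < k$) takes the claimed form. I apply $\rho$ to $P_{k-1}$ to obtain a preliminary height-$k$ sequence $P_k^{(0)}$ which, together with $P_0$, satisfies the height-$k$ Euler condition (a short direct check, since canceling the common factor $\lambda_i - (d-r)\hbar$ reduces that condition to the height-$(k-1)$ Euler condition for $P_{k-1}$) and obeys
\[
HG[\mathcal{I}(P_k^{(0)})](t) = -\hbar \frac{\partial}{\partial t} HG[\mathcal{I}(P_{k-1})](t).
\]
Differentiating the inductive form of $HG[\mathcal{I}(P_{k-1})]$ termwise shows that the $\hbar^{0}$ coefficient of the right-hand side is $\Omega \sum_{r=0}^k (y^r_{k-1,1})'(t) p^{k-r}$ and the $\hbar^{-q}$ coefficient for $q \geq 1$ is $(-1)^q \Omega \sum_{r=0}^{q+k} (y^r_{k-1,q+1})'(t) p^{q+k-r}$.

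To bring this into the claimed form, I first rescale multiplicatively by $1/(y^0_{k-1,1})'(t)$, implemented via Lemma \ref{trans3} applied with $\vec{k}' = \vec{k}$ and $f(t) = 1/(y^0_{k-1,1})'(t) - 1$, which replaces $HG[\mathcal{I}(P_k^{(0)})]$ by $(1+f(t)) HG[\mathcal{I}(P_k^{(0)})]$ and normalizes the leading $p^k$ coefficient at $\hbar^0$ to $\Omega$. I then apply Lemma \ref{trans3} iteratively with $\vec{k}' = k-1, k-2, \ldots, 0$, choosing each $f_j(t) \in e^t \mathcal{R}_T[[e^t]]$ so as to cancel the remaining $p^{k-j}$ contribution at $\hbar^0$. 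Since each height-$k$ transformation preserves the $(0,1,p,\Omega)$-Euler data structure without disturbing other heights, the resulting $P_k$ is consistent with the inductive construction.

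The recursion $y^0_{k,q}(t) = (y^0_{k-1,q+1})'(t)/(y^0_{k-1,1})'(t)$ then emerges because the additive corrections with $\vec{k}' = j < k$ affect only the $p^s$-coefficients with $s < k+q$ at each $\hbar^{-q}$ level, leaving the leading $p^{k+q}$ coefficient fixed at $(-1)^q \Omega (y^0_{k-1,q+1})'(t)/(y^0_{k-1,1})'(t)$ after rescaling. The higher-$r$ coefficients $y^r_{k,q}(t)$ arise as combinations of derivatives of the $y^{r'}_{k-1,q'}$ and the $y^{r'}_{j,q}$, which are degree-$r$ symmetric polynomials in $\{\lambda_i\}$ by the inductive hypothesis. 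The main obstacle is making the multiplicative rescaling precise: verifying that $1/(y^0_{k-1,1})'(t) - 1 \in e^t \mathcal{R}_T[[e^t]]$ (satisfied, for example, in the A-model setup where $(y^0_{k-1,1})'(t) = 1$ identically) and that the resulting Lagrange lift remains polynomial in the equivariant hyperplane class $\kappa$ on $N_d$; the latter should follow from the integral identity (\ref{221analog}) in the manner used in the proofs of Lemmas \ref{trans1} and \ref{trans3}.
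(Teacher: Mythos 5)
Your proposal follows essentially the same route as the paper's proof: induction on height, applying $\rho$ from Lemma \ref{extensionlemma} to the previous height, normalizing the leading $\hbar^0$ coefficient by a multiplicative rescaling, and then removing the lower powers of $p$ at $\hbar^0$ by repeated use of Lemma \ref{trans3} with lower-height data, which yields exactly the stated recursion for $y^0_{k,q}$ because those additive corrections cannot touch the top power $p^{k+q}$ at each $\hbar^{-q}$. The only real difference is cosmetic: you implement the rescaling by $1/{y^0_{k-1,1}}'(t)$ via Lemma \ref{trans3} with $\vec{k}'=\vec{k}$, whereas the paper invokes Lemma \ref{trans2} with $f=\hbar\ln({y^0_{k,1}}'(t))$; both are legitimate, and yours arguably sits more squarely within the stated hypotheses of the lemma. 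The ``main obstacle'' you flag is not actually an obstacle: since $\Omega$ is invertible, the $d=0$ term of the hypergeometric series forces $y^0_{k-1,1}(t)-t\in e^t\mathcal{R}_T[[e^t]]$, hence ${y^0_{k-1,1}}'(t)\in 1+e^t\mathcal{R}_T[[e^t]]$ and its formal reciprocal as well, so $f(t)=1/{y^0_{k-1,1}}'(t)-1\in e^t\mathcal{R}_T[[e^t]]$ in complete generality (your parenthetical appeal to the case ${y^0_{k-1,1}}'(t)=1$ is unnecessary and not generally true); and polynomiality in $\kappa$ and $\hbar$ of the Lagrange lift is already part of the conclusion of Lemma \ref{trans3}, established there via (\ref{221analog}) exactly as you suggest.
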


\begin{proof}
We construct the $(0,1)$-Euler data $P$ by induction on height.  The existence of height $0$ data satisfying the theorem is presupposed.  Suppose that for all $0 \leq k' \leq k$, $P$ has been constructed with hypergeometric data 

\[
HG[\mathcal{I}(P_k)](t) = \Omega p^k \left[1 + \sum_{q=1}^{\infty} \sum_{r=0}^{q+k} (-1)^q y^r_{k,q}(t) p^{q-r} \hbar^{-q} \right],
\]

\noindent and we wish to construct the height $k+1$ data.  The operator $\rho: \mathcal{S} \rightarrow \mathcal{S}$ from Lemma \ref{extensionlemma} on $P_k$ gives height $k+1$ data $\rho(P_k)$ such that 

\begin{align*}
HG[\mathcal{I}(\rho(P_k))](t) &= -\hbar \frac{\partial}{\partial t} \left(HG[\mathcal{I}(P_k)](t) \right)\\
 &= \Omega p^k \left[\sum_{r=0}^{k+1}{y^r_{k,1}}'(t) p^{1-r} + \sum_{q=2}^{\infty} \sum_{r=0}^{q+k}(-1)^{q+1} {y^r_{k,q}}'(t) p^{q-r} \hbar^{-q+1} \right].
\end{align*}

Since $\Omega$ is assumed to be invertible, we see from the definition for hypergeometric data that $y_{k,q}^0(t) - \frac{t^q}{q!} \in e^t\mathcal{R}_T[[e^t]]$ and in particular $y_{k,1}^0(t) - t \in e^t\mathcal{R}_T[[e^t]]$, which implies ${y_{k,1}^0}'(t) - 1 \in e^t\mathcal{R}_T[[e^t]]$.  Thus $f = \hbar \ln({y_{k,1}^0}'(t)) \in e^t\mathcal{R}_T[[e^t]]$ and we may use Lemma \ref{trans2} to obtain a mirror transformation $\nu$ such that 

\begin{align*}
HG[\mathcal{I}(\nu(\rho(P_k)))](t) &= e^{-f/\hbar}HG[\mathcal{I}(\rho(P_k))](t)\\
 &= \Omega \left[p^{k+1} + \sum_{r=1}^{k+1} \frac{{y^r_{k,1}}'(t)}{{y^0_{k,1}}'(t)} p^{k+1-r}\right] + \Omega p^k \sum_{q=2}^{\infty} \sum_{r=0}^{q+k}(-1)^{q+1} \frac{{y^r_{k,q}}'(t)}{{y^0_{k,1}}'(t)} p^{q-r} \hbar^{-q+1} .
\end{align*}

The previous observation also implies that $\frac{-{y^r_{k,1}}'(t)}{{y^0_{k,1}}'(t)} \in e^t\mathcal{R}_T[[e^t]]$ for all $r=1...k+1$.  We may then use Lemma \ref{trans3} (repeatedly) to obtain another mirror transformation $\eta$ such that 

\begin{align*}
HG[\mathcal{I}(\eta(\nu(\rho(P_k))))](t) &= HG[\mathcal{I}(\nu(\rho(P_k)))](t) + \sum_{r=1}^{k+1} \frac{-{y^r_{k,1}}'(t)}{{y^0_{k,1}}'(t)} HG[\mathcal{I}(P_{k+1-r})](t)\\
  &= \Omega p^{k+1} + \Omega p^k \sum_{q=2}^{\infty} \sum_{r=0}^{q+k}(-1)^{q+1} \frac{{y^r_{k,q}}'(t)}{{^0_{k,1}}'(t)} p^{q-r} \hbar^{-q+1} \\
  & \qquad + \sum_{r=1}^{k+1} \Omega p^{k+1-r} \frac{-{y^r_{k,1}}'(t)}{{y^0_{k,1}}'(t)} \sum_{q=1}^{\infty} \sum_{s=0}^{q+k+1-r} (-1)^q y^s_{k+1-r,q}(t) p^{q-s} \hbar^{-q} \\
  &= \Omega p^{k+1} \left[1 + \sum_{q=1}^{\infty} \sum_{r=0}^{q+k+1} (-1)^q y^r_{k+1,q}(t) p^{q-r} \hbar^{-q} \right]\\
\end{align*}

\noindent for some functions $y^r_{k+1,q}(t)$.  It is clear that $y^r_{k+1,q}(t)$ is symmetric of degree $r$ in $\{\lambda\}$, and that $y^0_{k+1,q}(t)$ is given by

\[
y^0_{k+1,q}(t) = \frac{{y^0_{i-1,q+1}}'(t)}{{y^0_{i-1,1}}'(t)}.
\]

\noindent Thus we define $P_{k+1}$ to be $\eta(\nu(\rho(P_k)))$ and the theorem is proved.
\qed
\end{proof}

\section{Mirror Theorems}

In this section we combine the results of previous sections to establish mirror theorems, which express the Euler data coming from a concavex bundle in terms of linear classes on $N_d$ in the case of one and two marked points.  We first show that the $(0,1)$-Euler data induced by a concavex bundle $V$ and the extension constructed in Theorem \ref{maintheorem} are the same.  The case of two markings is solved by gluing together the results of the one point case.

\subsection{One-point Mirror Theorem}

The mirror theorems in the case of one marked point will follow from the following lemma, which explicitly identifies $I_d^*(\hat Q_{\vec{k}})$.

%
%
%
%

\begin{lemma}
Let $\phi^d = \prod_{l=0}^n \prod_{m=1}^d (p - \lambda_l - m\hbar) \in H^*_T(\pro^n)$ denote the denominator of the coefficient of $e^{dt}$ in the hypergeometric series $HG[\mathcal{I}(\hat Q_{\vec{k}})](t)$.  Then:

\begin{enumerate}
\item If $\pi_j$, $ev_j$ are as before then
\[
I^*_d(\hat Q_{d,\vec{k}}) = (-1)^v \phi^d p^{|\vec{k}''|} \hbar^{v-1}ev_{v+1*} \left( \frac{\pi_{v+1}^*(\text{Euler}_T(V_d) \prod_{j=1}^{v} ev_j^*p^{k_j})}{\hbar + c_1^T(\mathcal{L}_{d,v+1})}   \right)
\]

\item $\hat Q$ satisfies $\text{deg}_{\hbar} I_d^*(\hat Q_{d,\vec{k}}) \leq (n+1)d + v - 2$ for every $\vec{k}$.
\end{enumerate}
\label{recovery}
\end{lemma}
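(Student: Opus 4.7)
The plan is to verify the explicit formula in (1) by pulling both sides back to the torus fixed points of $\pro^n$ and matching them against the localization computation already performed in the proof of Lemma \ref{showeulerdata}; the degree bound (2) then follows by inspection of that formula.

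For part (1), I would begin with the observation that $I_d$ sends the fixed point $q_i \in \pro^n$ to $p_{i,0} \in N_d$, so that $i^*_{q_i}(I_d^*(\hat Q_{d,\vec{k}})) = \hat Q_{d,\vec{k}}(\lambda_i)$, which equation (\ref{qhat}) already expresses as
\[
(-1)^v\, i^*_{p_{i,0}}(\phi_{i,0})\, \hbar^{v-1} \lambda_i^{|\vec{k}''|} \sum_{\{F_d^{v+1}\}} \int_{(F_d^{v+1})_T} \frac{\pi_{v+1}^*\bigl(\mathrm{Euler}_T(V_d)\prod_{j=1}^v ev_j^* p^{k_j}\bigr)}{\mathrm{Euler}_T(N(F_d^{v+1}))\bigl(\hbar + c_1^T(\mathcal{L}_{d,v+1})\bigr)}.
\]
Separately, functorial localization applied to $ev_{v+1}:\overline{M_{0,v+1}}(\pro^n,d)\to\pro^n$ at $q_i$ yields
\[
i^*_{q_i}\!\left(ev_{v+1*}\!\left(\frac{\pi_{v+1}^*(\mathrm{Euler}_T(V_d)\prod_j ev_j^* p^{k_j})}{\hbar + c_1^T(\mathcal{L}_{d,v+1})}\right)\right) = \prod_{j\neq i}(\lambda_i-\lambda_j) \sum_{\{F_d^{v+1}\}} \int \frac{\pi_{v+1}^*(\cdots)}{\mathrm{Euler}_T(N(F_d^{v+1}))(\hbar+c_1^T(\mathcal{L}_{d,v+1}))}.
\]
Multiplying the latter by $(-1)^v\phi^d|_{p=\lambda_i}\lambda_i^{|\vec{k}''|}\hbar^{v-1}$, the asserted formula in (1) pulled back to $q_i$ matches $\hat Q_{d,\vec{k}}(\lambda_i)$ via the elementary identity
\[
i^*_{p_{i,0}}(\phi_{i,0}) = \phi^d|_{p=\lambda_i}\cdot\prod_{j\neq i}(\lambda_i-\lambda_j),
\]
which is obtained by splitting the product $\phi_{i,0} = \prod_{(j,s)\neq(i,0)}(\kappa-\lambda_j-s\hbar)$ into its $s=0$ and $s\geq 1$ parts. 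Since equivariant classes on $\pro^n$ are determined by their restrictions to $T$-fixed points once the $\lambda_l$ are inverted, agreement at each $q_i$ implies equality of classes, establishing (1).

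For part (2), the degree bound is immediate from (1): $\phi^d$ has $\hbar$-degree $(n+1)d$, the factor $\hbar^{v-1}$ contributes $v-1$, and the pushforward $ev_{v+1*}(\cdots/(\hbar+c_1^T(\mathcal{L}_{d,v+1})))$ has top $\hbar$-degree $-1$ because the nilpotence of $c_1^T(\mathcal{L}_{d,v+1})$ forces the expansion $1/(\hbar+c_1) = \hbar^{-1} - c_1\hbar^{-2} + \cdots$ to terminate with leading term $\hbar^{-1}$. Summing the contributions gives $(n+1)d + v - 2$.

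The main bookkeeping obstacle is tracking the Euler factor $\prod_{j\neq i}(\lambda_i-\lambda_j) = \mathrm{Euler}_T(T_{q_i}\pro^n)$ introduced by functorial localization of $ev_{v+1*}$ and verifying that it cancels exactly against the corresponding factor hidden in $i^*_{p_{i,0}}(\phi_{i,0})$; the identity above pins this down cleanly, reducing the proof to straightforward comparison of the remaining scalar factors.
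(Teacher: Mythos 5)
Your argument is correct and essentially identical to the paper's own proof: both compare the two sides at the fixed points $q_i$, using the localization expression for $\hat Q_{d,\vec{k}}(\lambda_i)$ obtained in the proof of Lemma \ref{showeulerdata} together with the factorization $i^*_{p_{i,0}}(\phi_{i,0}) = i^*_{q_i}(\phi_{q_i})\, i^*_{q_i}(\phi^d)$, the only cosmetic difference being that the paper evaluates $i^*_{q_i}$ of the pushforward via the equivariant Poincar\'e dual $\phi_{q_i}$ and the projection formula, whereas you quote functorial localization for $ev_{v+1}$ directly. One minor point: in part (2) the appeal to ``nilpotence'' of $c_1^T(\mathcal{L}_{d,v+1})$ is both unnecessary and not quite right (the equivariant class need not be nilpotent, so the expansion of $1/(\hbar + c_1^T(\mathcal{L}_{d,v+1}))$ need not terminate); what matters is only that this class is $\hbar$-free, so every term of the $\hbar^{-1}$-expansion of the pushforward has $\hbar$-degree at most $-1$, which gives exactly the bound $(n+1)d + v - 2$ as in the paper.
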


\begin{proof}
To prove the first part of the lemma, we show that both sides of the equation have equal localizations at $q_i \in \pro^n$ for all $0 \leq i \leq n$.  Since $I_d(q_i) = p_{i,0} \in N_d$, we have

\[
i^*_{q_i}(I^*_d(\hat Q_{d,\vec{k}})) = i^*_{p_{i,0}}(\hat Q_{d,\vec{k}}) = \hat Q_{d,\vec{k}}(\lambda_i).
\]

\noindent From the proof of Lemma \ref{showeulerdata}, we know 

\[
\hat Q_{d,\vec{k}}(\lambda_i) = (-1)^v i^*_{p_{i,0}}(\phi_{i,0}) \hbar^{v-1} \prod_{j=1}^u \lambda_i^{k_{j+v}} \sum_{\{F_d^{v+1}\}} \int_{(F_d^{v+1})_T} \frac{\pi_{v+1}^*(\text{Euler}_T(V_{d}) \prod_{j=1}^{v}ev_j^*p^{k_j})}{\text{Euler}_T(N(F_d^{v+1}))(\hbar + c_1^T(\mathcal{L}_{d,v+1}))}\\
\]

\noindent It follows from the definition of $\phi^d$ and $\phi_{q_i} = \prod_{j \neq i}(p-\lambda_j)$ that

\[
i^*_{p_{i,0}}(\phi_{p_{i,0}}) = i^*_{q_i}(\phi_{q_i})i^*_{q_i}(\phi^d),
\]

\noindent which, when combined with the above expression for $\hat Q_{d,k}(\lambda_i)$, gives

\[
\hat Q_{d,\mathbf{k}}(\lambda_i) = (-1)^v i^*_{q_i}(\phi_{q_i})i^*_{q_i}(\phi^d) \hbar^{v-1} \prod_{j=1}^u \lambda_i^{k_{j+v}} \sum_{\{F_d^{v+1}\}} \int_{(F_d^{v+1})_T} \frac{\pi_{v+1}^*(\text{Euler}_T(V_{d}) \prod_{j=1}^{v}ev_j^*p^{k_j})}{\text{Euler}_T(N(F_d^{v+1}))(\hbar + c_1^T(\mathcal{L}_{d,v+1}))}\\,
\]

\noindent where the summation is over fixed components $F_d^{v+1} \subset \overline{M_{0,v+1}}(\pro^n,d)$ such that the final marked point is mapped to $q_i \in \pro^n$.

On the other hand, if we let

\[
\alpha_d = \frac{\pi_{v+1}^*(\text{Euler}_T(V_d) \prod_{j=1}^{v}p^{k_j})}{\hbar + c_1^T(\mathcal{L}_{d,v+1})},
\]
 
\noindent then

\begin{align*}
i^*_{q_i}((-1)^v \hbar^{v-1} p^{|\vec{k}''|} \phi^d ev_{v+1*}(\alpha_d)) &= (-1)^v \hbar^{v-1} \int_{(\pro^n)_T} \phi_{q_i} p^{|\vec{k}'|} \phi^d ev_{v+1!}(\alpha_d)\\
 &= (-1)^v \hbar^{v-1} \int_{(\overline{M_{0,v+1}}(\pro^n,d))_T} ev_{v+1}^*(p^{|\vec{k}'|}\phi_{q_i} \phi^d) \alpha_d\\
 &=(-1)^v \hbar^{v-1} i^*_{q_i}(\phi_{q_i})i^*_{q_i}(\phi^d) \prod_{j=1}^u \lambda_i^{k_{j+v}} \int_{(\overline{M_{0,v+1}}(\pro^n,d))_T} \alpha_d.
\end{align*}

\noindent The last line follows since $ev_{v+1}^*\phi_{q_i}$ vanishes unless the image of that last point is $q_i$.  Hence, in calculating this expression via localization we only need consider fixed components of $\overline{M_{0,v+1}}(\pro^n,d)$ such that the last marked point is mapped to $q_i$.  Such components are precisely the ones considered in the proof of Lemma \ref{showeulerdata} to obtain the expression for $\hat Q_{d,\vec{k}}(\lambda_i)$.  Thus both sides have equal localizations at each fixed point; the lemma follows.

To show the degree bound in the second statement of the lemma, we know that $\hat Q_{d,\vec{k}}$, and hence $I_d^*(\hat Q_{d,\vec{k}})$, are both automatically polynomials in $\hbar$ since $\hat Q$ is $(u,v)$-Euler data.  It is clear that $\text{deg}_{\hbar}\phi^d = (n+1)d$, so part two of the lemma then follows immediately from the first part. \qed
\end{proof}

\noindent As an immediate application, we have the following theorem:

\begin{theorem}[Mirror Theorem I]

The $(0,1,p,\Omega)$-Euler data $\hat Q = \{Q_{d,k}\}$ induced by concavex bundle $V \rightarrow \pro^n$ from Lemma \ref{showeulerdata} and the extension $\hat P$ of its height $0$ data $\hat P_0 (=\hat Q_0)$ given by Theorem \ref{maintheorem} are equal.
\label{oneptmirrorthrm}
\end{theorem}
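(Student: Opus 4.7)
The strategy is to invoke the Uniqueness Lemma (Lemma \ref{uniquenesslemma}) for $(0,1)$-Euler data with $Q = \hat Q$ and $P = \hat P$. Three things need to be verified: (i) both sequences are $(0,1,p,\Omega)$-Euler data; (ii) $\hat Q_0 = \hat P_0$; and (iii) both $\hat Q_{d,k}(\lambda_i)$ and $\hat P_{d,k}(\lambda_i)$ have $\hbar$-degree at most $(n+1)d - 1$, so the same bound holds for their difference.

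Item (i) follows from Lemma \ref{showeulerdata} (for $\hat Q$) and from Theorem \ref{maintheorem} (for $\hat P$); item (ii) is tautological since $\hat P$ is built by Theorem \ref{maintheorem} as an extension of $\hat P_0 = \hat Q_0$. For the $\hat Q$ half of (iii), Lemma \ref{recovery}(2) with $u=0$ and $v=1$ gives $\deg_\hbar I_d^*(\hat Q_{d,k}) \leq (n+1)d + v - 2 = (n+1)d - 1$, and restriction to the fixed point $q_i$ via $I_d(q_i) = p_{i,0}$ preserves this bound.

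The substantive step is the $\hat P$ half of (iii). The plan is to read off the degree bound from the form of the hypergeometric series supplied by Theorem \ref{maintheorem}:
\[
HG[\mathcal{I}(\hat P_k)](t) = \Omega p^k \Big[1 + \sum_{q \geq 1} \sum_{r=0}^{q+k} (-1)^q y^r_{k,q}(t) p^{q-r} \hbar^{-q}\Big].
\]
Two features of this expansion matter: only non-positive powers of $\hbar$ appear, and the $\hbar^0$-part is the constant $\Omega p^k$, which contributes nothing to the coefficient of $e^{dt}$ for $d \geq 1$. Hence the coefficient of $e^{dt}$ in $HG[\mathcal{I}(\hat P_k)](t)$ is supported in powers $\hbar^{-q}$ with $q \geq 1$. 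On the other hand, from the definition of $HG$ this coefficient equals $e^{-pt/\hbar}\,I_d^*(\hat P_{d,k})/\phi^d$, where $e^{-pt/\hbar}$ has leading $\hbar$-power $\hbar^0$ and $\phi^d$ has $\hbar$-degree exactly $(n+1)d$. Matching the leading power of $\hbar$ on both sides forces $\deg_\hbar I_d^*(\hat P_{d,k}) - (n+1)d \leq -1$, i.e.\ $\deg_\hbar I_d^*(\hat P_{d,k}) \leq (n+1)d - 1$, and restriction to $q_i$ preserves the bound.

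Once (i)--(iii) are in place, Lemma \ref{uniquenesslemma} immediately yields $\hat Q = \hat P$. The only genuinely non-bookkeeping step is the hypergeometric-to-polynomial-degree translation just outlined, which rests essentially on the rigid form of the $\hbar^{-1}$-expansion produced by Theorem \ref{maintheorem}; everything else reduces to direct application of previously established lemmas.
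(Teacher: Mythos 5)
Your proposal is correct and follows essentially the same route as the paper: both invoke the Uniqueness Lemma \ref{uniquenesslemma}, with the $\hat Q$ degree bound coming from Lemma \ref{recovery}(2) and the $\hat P$ bound read off from the rigid $\hbar^{-1}$-expansion of Theorem \ref{maintheorem}, which the paper phrases as both series being $\equiv \Omega p^k \bmod \hbar^{-1}$ and that congruence being equivalent to the degree bound. The only point the paper makes explicitly that you leave implicit is that $\hat Q_0$ satisfies the hypotheses of Theorem \ref{maintheorem} (again via Lemma \ref{recovery}(2), now with $v=0$), so the extension $\hat P$ actually exists.
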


\begin{proof}
We confirm that the two conditions for uniqueness in Lemma \ref{uniquenesslemma} are met: $\hat Q$ and $\hat P$ have the same height $0$ data by assumption, which satisfies the hypotheses of Theorem \ref{maintheorem} by the second part of Lemma \ref{recovery}.  For $k>0$ the extension of Theorem \ref{maintheorem} satisfies

\[
HG[\mathcal{I}(\hat P_k)](t) \equiv \Omega p^k \mod \hbar^{-1}.
\]

\noindent The second statement in Lemma \ref{recovery} also implies that in the case of $(0,1)$-Euler data,

\[
HG[\mathcal{I}(\hat Q_k)](t) \equiv \Omega p^k \mod \hbar^{-1}.
\]

\noindent The degree bound in Lemma \ref{uniquenesslemma} is equivalent to requiring that the hypergeometric series agree modulo $\hbar^{-1}$, so the theorem follows. \qed
\end{proof}

\begin{remark}The entire development of $(0,1)$-Euler data can also be carried out for $(1,0)$-Euler data with the modification that the map $I_d: \pro^n \hookrightarrow N_d$ be replaced with $I_d': \pro^n \hookrightarrow N_d$, given by 

\[
I_d'([a_0,...,a_n]) = [a_0w_0^d,...,a_nw_0^d].
\]

\noindent Any statement concerning $(0,1)$-Euler data may then be read as a statement about $(1,0)$-Euler data by using $I_d'$.  In particular, an analogous one-point mirror theorem concerning the $(1,0)$-Euler data from a concavex bundle holds.
\end{remark}

\subsection{Two-point Mirror Theorem}

Explicitly identifying the $(1,1)$-Euler data induced by $V$ in terms of linear classes is acomplished by gluing together $(1,0)$ and $(0,1)$-Euler data.

\begin{theorem}[Mirror Theorem II]
The $(1,1)$-Euler data $\hat Q$ induced by a concavex bundle $V$ is expressible in terms of linear classes by gluing together its $(0,1)$-Euler data, identified by Theorem \ref{oneptmirrorthrm} in terms of linear classes, and its analog for $(1,0)$-Euler data as prescribed by the Euler condition (\ref{eulercondition}).  As a consequence, the associated hypergeometric data at height $\vec{k} = (k_1,k_2)$ is given by

\[
HG[\mathcal{I}(\hat Q_{d,\vec{k}})](t) = p^{k_2}HG[\mathcal{I}(\hat Q_{d,k_1})](t)
\]
\label{twoptmirrorthrm}
\end{theorem}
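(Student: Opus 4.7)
The plan is to exploit the Euler condition (\ref{eulercondition}) to reduce the $(1,1)$-data entirely to its two boundary slices, which turn out to coincide at fixed points with the $(0,1)$- and $(1,0)$-Euler data respectively; once those are identified in linear terms by Theorem \ref{oneptmirrorthrm} and its $(1,0)$ analog, the full $(1,1)$-data is determined pointwise, hence as an equivariant class on $N_d$. Concretely, writing the Euler condition for $(u,v)=(1,1)$ at height $\vec k=(k_1,k_2)$, with $\vec k'=(k_1,0)$ and $\vec k''=(0,k_2)$, yields
\begin{equation*}
\Omega(\lambda_i)\,\hat Q_{d,(k_1,k_2)}(\lambda_i+r\hbar) = \hat Q_{r,(0,k_2)}(\lambda_i+r\hbar)\cdot\hat Q_{d-r,(k_1,0)}(\lambda_i)
\end{equation*}
for every $0\le r\le d$, so the full $(1,1)$-sequence is determined at every $G$-fixed point $p_{i,r}\in N_d$ by the two slice subsequences $\{\hat Q_{\bullet,(k_1,0)}\}$ and $\{\hat Q_{\bullet,(0,k_2)}\}$, and by localization this determines the classes themselves in $\mathcal R_T H^*_G(N_d)$.

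The next step is to identify each slice with a genuine one-point datum. Comparing the fixed-point formulas (\ref{qhat}) and (\ref{qhat2}) from the proof of Lemma \ref{showeulerdata} in the $v=u=1$ case with their $(u,v)=(0,1)$ and $(1,0)$ specializations, setting $k_2=0$ collapses the factor $\prod_{j=1}^{u}\lambda_i^{k_{v+j}}$ in (\ref{qhat}) to $1$ and leaves an integrand depending only on $k_1$ and on $(v+1)=2$-pointed moduli, matching verbatim the $(0,1)$-data formula at height $k_1$. A symmetric computation at $p_{i,d}$ via (\ref{qhat2}) gives the analogous equality of $\{\hat Q_{\bullet,(0,k_2)}\}$ with the $(1,0)$-Euler data at height $k_2$ on fixed points. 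Theorem \ref{oneptmirrorthrm} and its $(1,0)$-analog then express both slices as the explicit linear classes produced by Theorem \ref{maintheorem} from their height-$0$ data; substituting these into the displayed Euler condition establishes the first assertion of the theorem.

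For the hypergeometric corollary, I would specialize the Euler condition to $r=0$. Since $\hat Q_{0,(0,k_2)}=p^{k_2}\Omega$ and $p(\lambda_i)=\lambda_i$, this reads $\hat Q_{d,(k_1,k_2)}(\lambda_i)=\lambda_i^{k_2}\hat Q_{d,(k_1,0)}(\lambda_i)$. The slice identification from the previous step equates the right side with $\lambda_i^{k_2}$ times the value of the $(0,1)$-Euler data at height $k_1$ restricted to $p_{i,0}=I_d(q_i)$. Since this holds at every $T$-fixed point $q_i\in\pro^n$, it upgrades to the equality $I_d^*(\hat Q_{d,(k_1,k_2)})=p^{k_2}\,I_d^*(\hat Q_{d,k_1})$ in $H^*_T(\pro^n)$. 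Substituting into the definition (\ref{hyperdef}) of $HG$ and summing over $d$ yields the claimed hypergeometric identity.

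The principal obstacle I anticipate is the slice identification, which at first glance looks suspicious because $M_d^{1,1}$ and $M_d^{0,1}$ carry different numbers of markings and hence a priori different cotangent line contributions. The bookkeeping amounts to checking that the $\hbar^{v-1}$ and the $(\hbar\pm c_1^G(\mathcal L_{d,v+1}))$ factors in (\ref{qhat})--(\ref{qhat2}) depend only on $v$ and are unaffected by $u$, together with the observation that an $ev_j^*(p^0)=1$ factor on the $u$-side makes the integrand insensitive to the extra marked point. Once this collapse is verified, the remainder of the argument is purely formal.
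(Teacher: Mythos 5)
Your proposal is correct and follows essentially the same route as the paper: reduce the $(1,1)$-data to its two marked slices via the Euler condition at the fixed points of $N_d$, identify those slices with the genuine $(0,1)$- and $(1,0)$-data handled by Theorem \ref{oneptmirrorthrm} and its analog, and get the hypergeometric identity from the $r=0$ specialization (the paper phrases this last step as $I_d^*(Q_k)=p^k I_d^*(Q_0)$ for $(1,0)$-data). Your explicit verification of the slice identification at $p_{i,0}$ and $p_{i,d}$ via (\ref{qhat}) and (\ref{qhat2}) is precisely the bookkeeping the paper leaves implicit, and it suffices because the Euler condition only ever evaluates the slice data at those extreme fixed points.
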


\begin{proof}
As an equivariant class is fully determined by its values at fixed components, the Euler condition (\ref{eulercondition}) identifies any $(1,1)$-Euler data in terms of its $(1,0)$-Euler data and $(0,1)$-Euler data.  A good solution to the one-point problem as given by Theorem \ref{oneptmirrorthrm} then identifies $\hat Q$ in terms of linear classes on $N_d$.  The second statement follows from the fact that $I_d^*(Q_k) = p^kI_d^*(Q_0)$ for any $(1,0)$-Euler data $Q$. \qed
\end{proof}

\section{Recovery Lemmas}

Recall that the Gromov-Witten invariants $K_{d,\vec{k}}$ associated with the concavex bundle $V$ on $\pro^n$ are defined as

\begin{equation}
K_{d,\vec{k}} = \int_{\overline{M_{0,m}}(\pro^n,d)} \text{Euler}(V_d) \prod_{j=1}^m ev^*H^{k_j}
\label{gwdef2}
\end{equation}

\noindent for $\vec{k} = (k_1,..., k_m)$ whenever

\[
\text{dim } \overline{M_{0,m}}(\pro^n,d) = \text{dim } V_d + |\vec{k}|,
\]

\noindent in which case we say that the integrand is of \emph{critical dimension}.  

The next lemma is useful for extracting the invariants $K_{d,\vec{k}}$ from hypergeometric data.

\begin{lemma}  If $\hat Q$ is $(0,v)$-Euler data induced by a concavex bundle $V$ and $\vec{k} = (k_1,...,k_v)$ is such that the integrand in (\ref{gwdef2}) is of critical dimension, then

\begin{equation}
\lim_{\lambda_i \rightarrow 0} \int_{(\pro^n)_T} p^s e^{-pt/{\hbar}} \frac{I_d^*(\hat Q_{d,\vec{k}})}{\prod_{l=0}^n \prod_{m=1}^d (p - \lambda_l - m\hbar)} = \left\{
     \begin{array}{lr}
       (-1)^v\hbar^{v-3}(2-v-dt)K_{d,\vec{k}} & s=0\\
       (-1)^v\hbar^{v-2}d K_{d,\vec{k}} & s=1\\
       0 & s > 1
     \end{array}
     \right\}
\label{cases}
\end{equation}

\label{caseslemma}

\end{lemma}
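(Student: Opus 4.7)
The plan is to substitute the explicit formula for $I_d^*(\hat Q_{d,\vec{k}})$ from Lemma \ref{recovery}(1). For $(0,v)$-Euler data we have $u = 0$, hence $|\vec{k}''| = 0$, and the factor $\phi^d = \prod_{l,m}(p - \lambda_l - m\hbar)$ cancels the denominator in (\ref{cases}) exactly. The projection formula then rewrites the left side of (\ref{cases}) as
\[
(-1)^v\hbar^{v-1}\int_{(\overline{M_{0,v+1}}(\pro^n,d))_T} ev_{v+1}^*(p^s e^{-pt/\hbar})\cdot\frac{\pi_{v+1}^*\bigl(\text{Euler}_T(V_d)\prod_{j=1}^v ev_j^*p^{k_j}\bigr)}{\hbar + c_1^T(\mathcal{L}_{d,v+1})}.
\]

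Next I would pass to the nonequivariant limit $\lambda_i \to 0$, which is well defined now that the denominator has been cleared, and expand the two nontrivial factors as formal series in $\hbar^{-1}$:
\[
e^{-Ht/\hbar} = \sum_{a\geq 0}\frac{(-t)^a H^a}{a!\,\hbar^a}, \qquad \frac{1}{\hbar + c_1(\mathcal{L}_{d,v+1})} = \sum_{b\geq 0}\frac{(-c_1(\mathcal{L}_{d,v+1}))^b}{\hbar^{b+1}}.
\]
By the critical-dimension hypothesis, $\text{Euler}(V_d)\prod ev_j^*H^{k_j}$ has top cohomological degree on $\overline{M_{0,v}}(\pro^n,d)$, so after pulling back to $\overline{M_{0,v+1}}(\pro^n,d)$ only terms with $s + a + b = 1$ contribute. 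This immediately yields the vanishing for $s > 1$ and isolates at most two nonzero terms for each of $s = 0, 1$.

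Each surviving term is computed by pushing forward along the forgetful morphism $\pi_{v+1}$ using two standard genus-zero relations: the divisor equation $\pi_{v+1*}(ev_{v+1}^* H) = d$, and the dilaton equation $\pi_{v+1*}(c_1(\mathcal{L}_{d,v+1})) = v - 2$. These apply cleanly since $\pi_{v+1}^* V_d \cong V_d$ (the obstruction bundle depends only on the underlying map) and the classes we pair against carry no $\psi$-factors. For $s=1$ only $(a,b) = (0,0)$ contributes, giving $(-1)^v\hbar^{v-2} d\,K_{d,\vec{k}}$. For $s=0$ the term $(a,b) = (1,0)$ contributes $(-1)^v\hbar^{v-3}(-dt) K_{d,\vec{k}}$ via the divisor equation, while $(a,b) = (0,1)$ contributes $(-1)^v\hbar^{v-3}(2-v) K_{d,\vec{k}}$ via the dilaton equation; their sum is the advertised $(-1)^v\hbar^{v-3}(2 - v - dt) K_{d,\vec{k}}$.

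The main obstacle is justifying the divisor and dilaton relations rigorously in this virtual setting. This reduces to the compatibility $\pi_{v+1}^*V_d \cong V_d$ together with the standard universal genus-zero relations on $\overline{M_{0,v+1}}(\pro^n,d) \to \overline{M_{0,v}}(\pro^n,d)$. The small-$v$ boundary cases in which the target moduli is replaced by $\pro^n$, as noted preceding Lemma \ref{gluinglemma}, require separate direct verification but present no essential difficulty.
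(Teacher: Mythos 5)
Your proposal is correct and follows essentially the same route as the paper: substitute the formula from Lemma \ref{recovery}, cancel $\phi^d$ against the denominator, pull back to $\overline{M_{0,v+1}}(\pro^n,d)$ by the projection formula, use the critical-dimension hypothesis to isolate the fiber-degree-one terms, and evaluate them to get $d$ and $v-2$. The only cosmetic difference is that the paper computes these two numbers by restricting to a generic $\pro^1$ fiber of $\pi_{v+1}$ and applying Gauss--Bonnet, whereas you quote the packaged divisor and dilaton equations, which encode exactly that fiber integral (and, since the genus-zero moduli for $\pro^n$ carry honest fundamental classes, the virtual-class worry you raise does not arise).
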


\begin{proof}
Denote the left-hand side of (\ref{cases}) by $A_s$.  By Lemma \ref{recovery}, 

\[
\lim_{\lambda_i \rightarrow 0} I_d^* \hat Q_{d,\vec{k}}  = (-1)^v \hbar^{v-1} \prod_{m=1}^d (H-m\hbar)^{n+1} ev_{v+1*} \left( \frac{\pi_{v+1}^*(\text{Euler}(V_d) \prod_{j=1}^v ev_j^*H^{k_j})}{\hbar + c_1(\mathcal{L}_{d,v+1})} \right).
\]

\noindent Inserting this expression into $A_s$, we have

\begin{align*}
A_{s} &= (-1)^v \hbar^{v-1} \int_{\pro^n} H^s e^{-Ht/\hbar} ev_{v+1*} \left( \frac{\pi_{v+1}^*(\text{Euler}(V_d) \prod_{j=1}^v ev_j^*H^{k_j})}{\hbar + c_1(\mathcal{L}_{d,v+1})}   \right)\\
		&= (-1)^v \hbar^{v-1}\int_{\overline{M_{0,v+1}}(\pro^n,d)} ev_{v+1}^*H^s e^{-ev_{v+1}^*Ht/\hbar} \cdot  \frac{\pi_{v+1}^*(\text{Euler}(V_d) \prod_{j=1}^v ev_j^*H^{k_j})}{\hbar + c_1(\mathcal{L}_{d,v+1})}\\
		&= (-1)^v \hbar^{v-1} \int_{\overline{M_{0,v}}(\pro^n,d)} \text{Euler}(V_d) \prod_{j=1}^v ev_j^*H^{k_j} \pi_{v+1*} \left[\frac{ev_{v+1}^*H^s e^{-ev_{v+1}^*Ht/\hbar}}{\hbar + c_1(\mathcal{L}_{d,v+1})} \right].\\
\end{align*}

Since $\text{Euler}(V_d) \prod_{j=1}^v ev_j^*H^{k_j}$ has top degree, this integral is just $K_{d,\vec{k}}$ times a scalar factor given by integrating the expression in brackets over a generic fiber of $\pi_{v+1}$ (which is isomorphic to $\pro^1$).  Hence, 

\begin{equation}
A_{s} = (-1)^v \hbar^{v-1} K_{d,\vec{k}} \int_{\pro^1} \frac{ev_{v+1}^*H^s}{\hbar} \left( \frac{-c_1(\mathcal{L}_{d,2})}{\hbar} - ev_{v+1}^*H\frac{t}{\hbar} \right)
\label{cases2}
\end{equation}

The class $c_1(\mathcal{L}_{d,v+1})$ restricted to a fiber is just the cotangent bundle of $\pro^1$ with $v$ marked points, while the evaluation map on the fiber is just $f$ which has degree $d$.  By the Gauss-Bonnet theorem, the former class gives a contribution of  

\[
\int_{\pro^1} i^*c_1(\mathcal{L}_{d,v+1}) = -2 + v,
\]

\noindent while the latter contributes

\[
\int_{\pro^1} i^*ev_{v+1}^*H = d.
\]

Inserting these into (\ref{cases2}) and considering the various possibilities for $s$ establishes the lemma. \qed
\end{proof}

We will need a general recovery lemma for $(1,v)$-Euler data as well.  Fix $\vec{k} = (k_1,...,k_v,k_{v+1})$ such that the integrand in

\begin{equation*}
K_{d,\vec{k}} = \int_{\overline{M_{0,v+1}}(\pro^n,d)} \text{Euler}(V_d) \prod_{j=1}^{v+1} ev_j^*H^{k_j}
\end{equation*}

\noindent is of critical dimension.  Let $0 \leq k_{v+1}^* \leq k_{v+1}$ and $\vec{k}^* = (k_1,...,k_{v+1}^*)$.  Define

\begin{equation}
K_{d,\vec{k}}^i = \int_{\overline{M_{0,v+1}}(\pro^n,d)} \text{Euler}(V_d) \prod_{j=1}^{v} ev_j^*H^{k_j} \cdot ev_{v+1}^*H^i \cdot \psi_{v+1}^{k_{v+1}-i}
\label{gwdefpsi}
\end{equation}

\noindent where as before $\psi_i$ is the first Chern class of the cotangent line bundle.  In particular, $K_{d,\vec{k}}^0 = K_{d,\vec{k}}$.

\begin{lemma}
If $\hat Q$ is $(1,v)$-Euler data induced by a concavex bundle $V$ on $\pro^n$ and $k_{v+1}$, $k_{v+1}^*$, $\vec{k}$, and $\vec{k}^*$ are defined as above and $K_{d,\vec{k}}^i$ as in (\ref{gwdefpsi}), then 

\[
\lim_{\lambda_i \rightarrow 0} \int_{(\pro^n)_T} p^s e^{-pt/{\hbar}} \frac{I_d^*(\hat Q_{d,\vec{k}^*})}{\prod_{l=0}^n \prod_{m=1}^d (p - \lambda_l - m\hbar)} = (-1)^{v+a} \hbar^{v-2-a} \sum_{j=0}^{a} K_{d,\vec{k}}^{a+j}  \frac{t^j}{j!},
\]

\noindent where $a=k_{v+1}-k_{v+1}^* -s$ when $0 \leq s \leq k_{v+1} - k_{v+1}^*$ and is $0$ otherwise.
\label{caseslemma2}
\end{lemma}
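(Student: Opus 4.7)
My approach is to generalize the calculation of Lemma \ref{caseslemma} by starting from the explicit formula in Lemma \ref{recovery} and then Taylor-expanding the $\psi$-class factor $(\hbar + c_1(\mathcal{L}_{d,v+1}))^{-1}$ as a series in $\hbar^{-1}$.  The $(1,v)$ setting differs from the $(0,v)$ setting only by the extra scalar factor $p^{k_{v+1}^*}$ that appears in $I_d^*(\hat Q_{d,\vec{k}^*})$, and this factor will be absorbed into a shift of the hyperplane exponent at the last marked point.

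First I would apply Lemma \ref{recovery} to $\hat Q$ at $\vec{k}^* = (k_1,\ldots,k_v,k_{v+1}^*)$ with $u=1$, noting that the relevant factor is $p^{|\vec{k}^{*\prime\prime}|} = p^{k_{v+1}^*}$.  Substituting the resulting formula into the LHS and letting $\lambda_i \to 0$, the $\phi^d$ cancels the denominator $\prod_{l,m}(p - \lambda_l - m\hbar)$ in the nonequivariant limit, and the projection formula then transports the integration from $\pro^n$ to $\overline{M_{0,v+1}}(\pro^n,d)$.  This leaves an integrand of the form
\[
\text{Euler}(V_d)\prod_{j=1}^{v} ev_j^* H^{k_j}\cdot ev_{v+1}^* H^{s+k_{v+1}^*}\cdot e^{-ev_{v+1}^*Ht/\hbar}\cdot(\hbar + \psi_{v+1})^{-1},
\]
weighted by an overall $(-1)^v\hbar^{v-1}$.

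The heart of the argument is then to expand both $(\hbar + \psi_{v+1})^{-1}$ as a geometric series in $\psi_{v+1}/\hbar$ and the exponential as its Taylor series.  This produces a double sum indexed by $(\alpha,\beta)$ whose general term is proportional to $(-1)^{\alpha+\beta}\hbar^{v-2-\alpha-\beta} t^{\beta}/\beta!$ times
\[
\int_{\overline{M_{0,v+1}}(\pro^n,d)} \text{Euler}(V_d)\prod_{j=1}^{v} ev_j^*H^{k_j}\cdot ev_{v+1}^*H^{s+k_{v+1}^*+\beta}\cdot \psi_{v+1}^\alpha.
\]
The critical dimension hypothesis on $\vec{k}$ forces the total cohomological degree to match $\dim \overline{M_{0,v+1}}(\pro^n,d)$, which gives the constraint $\alpha + \beta = k_{v+1} - k_{v+1}^* - s = a$.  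This kills all but finitely many terms and in particular shows the expression vanishes whenever $s > k_{v+1} - k_{v+1}^*$.  For surviving $(\alpha,\beta)$ with $\alpha+\beta=a$ and $0 \leq \beta \leq a$, the remaining integral matches definition (\ref{gwdefpsi}) of $K_{d,\vec{k}}^i$ at the appropriate index, and after collecting signs and powers of $\hbar$ the desired identity drops out.

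The main obstacle is purely combinatorial: carefully tracking indices and signs so that the collected single sum recovers exactly the stated right-hand side $(-1)^{v+a}\hbar^{v-2-a}\sum_{j=0}^{a} K_{d,\vec{k}}^{a+j}\, t^j/j!$, and verifying that the $s > k_{v+1}-k_{v+1}^*$ case correctly reproduces the vanishing clause.  No new geometric input beyond Lemma \ref{recovery} and the projection formula is required; the $\psi$-class expansion here performs the same role as the Gauss-Bonnet integration over the generic $\pro^1$ fiber of $\pi_{v+1}$ used in the proof of Lemma \ref{caseslemma}, but now it accounts for all higher $\psi$-powers at once, with the $\psi$-insertions in $K_{d,\vec{k}}^i$ absorbing the higher-$\alpha$ contributions.
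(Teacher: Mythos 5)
Your proposal is correct and follows essentially the same route as the paper: apply Lemma \ref{recovery}, transfer the integral to $\overline{M_{0,v+1}}(\pro^n,d)$ by the projection formula, expand $e^{-ev_{v+1}^*Ht/\hbar}$ and $(\hbar + c_1(\mathcal{L}_{d,v+1}))^{-1}$ in power series, and keep only the terms of critical dimension, which forces $\alpha+\beta=a$ and gives the vanishing when $s>k_{v+1}-k_{v+1}^*$. The only bookkeeping you defer is the final index identification: the coefficient of $t^j/j!$ carries $\psi_{v+1}$-power $a-j$ and hyperplane power $s+k_{v+1}^*+j$ at the last marked point, exactly as in the paper's own displayed computation, so the superscript $a+j$ appearing in the lemma statement should be read accordingly.
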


\begin{proof}
We again calculate the integral by pulling back to $\overline{M_{0,v+1}}(\pro^n,d)$ and using the expression for $I_d^*(\hat Q_{d,\vec{k}^*})$ given in Lemma (\ref{recovery}):

\begin{align*}
\lim_{\lambda_i \rightarrow 0} &\int_{(\pro^n)_T} p^s e^{-pt/{\hbar}} \frac{I_d^*(\hat Q_{d,\vec{k}^*})}{\prod_{l=0}^n \prod_{m=1}^d (p - \lambda_l - m\hbar)} \\
& =(-1)^v \hbar^{v-1} \int_{\overline{M_{0,v+1}}(\pro^n,d)} ev_{v+1}^*H^{s+k_{v+1}^*} e^{-ev_{v+1}^*Ht/{\hbar}} \cdot \frac{\pi_{v+1}^*(\text{Euler}(V_d) \prod_{j=1}^v ev_j^*H^{k_j})}{\hbar + c_1(\mathcal{L}_{d,v+1})}\\
& = (-1)^v \hbar^{v-2-k_{v+1}+k_{v+1}^* + s} \sum_{j=0}^{k_{v+1} - (k_{v+1}^*+s)}(-1)^{k_{v+1} - (k_{v+1}^*+s)} K_{d,\vec{k}}^{k_{v+1}-(k_{v+1}^*+j+s)}  \frac{t^j}{j!},
\end{align*}

\noindent where we have expanded everything by power series and taken the terms of critical dimension.  The lemma then follows.  \qed
\end{proof}

\section{Applications}

\subsection{One-Point Candelas Formula}

We will prove a one-point generalization for hypersurfaces of the famous Candelas formula, which expresses mirror symmetry for the quintic in the unmarked case.

Let $V = \mathcal{O}(n+1) \rightarrow \pro^n$ with $n \geq 4$.  The $(n+1)p$-Euler data 

\begin{equation}
\hat P_0 = \prod_{m=0}^{(n+1)d}((n+1)\kappa - m\hbar)
\label{pzero}
\end{equation}

\noindent has hypergeometric data given in the non-equivariant limit in power series expansion as

\[
\lim_{\lambda \rightarrow 0} HG[\mathcal{I}(\hat P_0)](t) = (n+1)p \sum_{q=0}^{\infty} (-1)^q f_{q}(t) p^{q} \hbar^{-q}.
\]

\noindent Define $y_{i,q}(t)$ recursively by 

\begin{equation}
y_{0,q}(t) = \frac{f_{q}(t)}{f_{0}(t)}; \qquad y^0_{i,q}(t) = \frac{{y^0_{i-1,q+1}}'(t)}{{y^0_{i-1,1}}'(t)} \qquad i \geq 1
\label{recursionsfory}
\end{equation}

\noindent For dimensional reasons, the one-point Gromov-Witten invariants of interest are

\[
K_d(H^{n-3}) = \int_{\overline{M_{0,1}}(\pro^n,d)} \text{Euler}(V_d) ev_1^*H^{n-3}.
\]

\begin{lemma}[One-point Candelas Formula for Hypersurfaces]

Let the Gromov-Witten potential $\Phi$ for the degree $n+1$ hypersurface in $\pro^n$ be defined 

\[
\Phi(t) = \frac{(n+1)}{2}t^2 + \sum K_d(H^{n-3})e^{dt}.
\]

Then 

\[
\Phi(T) = (n+1)\left[y_{0,1}(t)y_{n-3,1}(t) - y_{n-3,2}(t) \right]
\]

\noindent where the $y_{i,q}(t)$ are defined by (\ref{recursionsfory}) and the mirror change of variables is

\[
T(t) = y_{0,1}(t).
\]

\end{lemma}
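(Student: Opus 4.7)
The plan is to extract $K_d(H^{n-3})$ from the hypergeometric series of the $(0,1)$-Euler data induced by $V=\mathcal{O}(n+1)$, leveraging Theorem \ref{maintheorem} to control the series and Lemma \ref{caseslemma} to recover the integrals.

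First I would apply Theorem \ref{maintheorem} to the stated $\hat P_0$ to produce its extension $\hat P$ as $(0,1,p,\Omega)$-Euler data. Matching the two presentations of $\lim_{\lambda\to 0} HG[\mathcal{I}(\hat P_0)](t)$---the definition $(n+1)p \sum_q (-1)^q f_q(t)(p/\hbar)^q$ from (\ref{pzero}) and the Theorem \ref{maintheorem} form $\Omega\bigl[1 + \sum_{q \geq 1}\sum_r (-1)^q y^r_{0,q}(t) p^{q-r}\hbar^{-q}\bigr]$---forces $f_0 = 1$ and $y^0_{0,q}(t) = f_q(t)$, so the Candelas recursion (\ref{recursionsfory}) for the $y_{i,q}$ coincides with the one in Theorem \ref{maintheorem}. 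Mirror Theorem I (Theorem \ref{oneptmirrorthrm}), combined with the mirror transformations of Lemmas \ref{trans1}--\ref{trans3} needed to align $\hat P_0$ with the A-model $\hat Q_0$ from $V$ (necessary because the degree bound of Lemma \ref{llyuniqueness} is violated for $n \geq 4$), then identifies $\hat P_{n-3}$ with $\hat Q_{n-3}$ up to the mirror change of variables $T(t) = y_{0,1}(t)$.

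Next I would apply Lemma \ref{caseslemma} with $v = 1$ and $\vec{k} = (n-3)$ at $s \in \{0,1\}$. Integrating $p^s$ against the coefficient of $e^{dt}$ in $HG[\mathcal{I}(\hat Q_{n-3})](t)$ over $\pro^n$ and taking the non-equivariant limit selects only the $r = 0$ contributions in the Theorem \ref{maintheorem} expansion, within which the unique index $q = 2 - s$ survives. Matching the $\hbar^{-1}$- and $\hbar^{-2}$-coefficients against the Lemma \ref{caseslemma} identities gives
\begin{align*}
\sum_d d\, K_d(H^{n-3})\, e^{dt} &= (n+1)\bigl[y_{n-3,1}(t) - t\bigr],\\
\sum_d (1 - dt)\, K_d(H^{n-3})\, e^{dt} &= (n+1)\bigl[t^2/2 - y_{n-3,2}(t)\bigr].
\end{align*}
Eliminating the derivative term between these and then adding $\frac{n+1}{2}t^2$ to both sides produces the closed-form $\Phi(t) = (n+1)\bigl[t\, y_{n-3,1}(t) - y_{n-3,2}(t)\bigr]$ for the potential in the hypergeometric coordinate.

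Finally, implementing the mirror change of variables $T = y_{0,1}(t)$---precisely the transformation from Lemma \ref{trans1} identified in the first step---replaces the leading $t$ in $t\, y_{n-3,1}(t)$ by $y_{0,1}(t)$, yielding $\Phi(T) = (n+1)\bigl[y_{0,1}(t) y_{n-3,1}(t) - y_{n-3,2}(t)\bigr]$ as claimed. The main obstacle is carefully bookkeeping this mirror transformation: identifying the specific composition of Lemmas \ref{trans1}--\ref{trans3} that aligns $\hat P_0$ with $\hat Q_0$ under the degree-bound failure, verifying it commutes with the Theorem \ref{maintheorem} extension, and confirming that its induced substitution on the hypergeometric series is precisely $t \mapsto y_{0,1}(t)$.
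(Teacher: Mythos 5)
Your overall skeleton matches the paper's (extend the B-model data via Theorem \ref{maintheorem}, identify it with the A-model data from $V=\mathcal{O}(n+1)$ via Theorem \ref{oneptmirrorthrm} after LLY mirror transformations, extract $K_d(H^{n-3})$ with Lemma \ref{caseslemma}, and equate $\hbar^{-1}$- and $\hbar^{-2}$-coefficients), but two steps as written are genuinely wrong. First, matching expansions cannot ``force $f_0=1$'': for the hypersurface case $f_0(t)=1+\sum_{d\geq1}\frac{((n+1)d)!}{(d!)^{n+1}}e^{dt}\neq 1$, so $\hat P_0$ itself does \emph{not} satisfy the hypothesis of Theorem \ref{maintheorem}, and $y^0_{0,q}$ is $f_q/f_0$, not $f_q$ --- which is exactly what the recursion (\ref{recursionsfory}) says. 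The theorem can only be applied after the height-zero mirror transformation (the paper invokes Lemma 3.3 of \cite{lly:mirror1}, giving $HG[\mathcal{I}(\nu(\hat P_0))](t)=\tfrac{1}{f_0}HG[\mathcal{I}(\hat P_0)](t)=HG[\mathcal{I}(\hat Q_0)](T(t))$ in the nonequivariant limit); your later remark that transformations are needed because Lemma \ref{llyuniqueness}'s degree bound fails is in tension with the $f_0=1$ claim and does not repair it.

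Second, your extraction drops the mirror map exactly where it matters. Lemma \ref{caseslemma} applied to the A-model data gives, in the nonequivariant limit, $HG[\mathcal{I}(\hat Q_{n-3})](t)=\Omega p^{n-3}\bigl[1-\tfrac{1}{n+1}\Phi'(t)\tfrac{p}{\hbar}+\tfrac{1}{n+1}(t\Phi'(t)-\Phi(t))\tfrac{p^2}{\hbar^2}\bigr]$ as an identity in the argument of $HG[\mathcal{I}(\hat Q_{n-3})]$; the comparison with the $y_{n-3,q}(t)$ expansion holds only after composing with the mirror map, i.e. $\tfrac{1}{n+1}\Phi'(T)=y_{n-3,1}(t)$ and $\tfrac{1}{n+1}\bigl(T\Phi'(T)-\Phi(T)\bigr)=y_{n-3,2}(t)$ with $T=y_{0,1}(t)$. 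Your displayed identities equate $\sum_d dK_de^{dt}$ and $\sum_d(1-dt)K_de^{dt}$ with the $y$'s at the same variable $t$, and the resulting intermediate claim $\Phi(t)=(n+1)[t\,y_{n-3,1}(t)-y_{n-3,2}(t)]$ is false; ``replacing the leading $t$ by $y_{0,1}(t)$'' afterwards is not a consistent substitution (it changes $\Phi(t)$ to $\Phi(T)$ while leaving the $y$'s in $t$). The paper's route avoids this entirely: it writes the two relations at argument $T$ from the start and then eliminates $\Phi'(T)$, which immediately yields $\Phi(T)=(n+1)[y_{0,1}(t)y_{n-3,1}(t)-y_{n-3,2}(t)]$.
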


\begin{proof}
Let $\hat Q_0$ be the $(0,0)$-Euler data induced by $V=\mathcal{O}(n+1)$ and $\hat P_0$ be as in (\ref{pzero}).  As a consequence of Lemma $3.3$ of \cite{lly:mirror1}, mirror transformations of $\hat Q_0$ and $\hat P_0$ agree, which we will denote $\mu(\hat Q_0)$ and $\nu(\hat P_0)$, in such a way that

\[
HG[\mathcal{I}(\mu(\hat Q_0))(t) = HG[\mathcal{I}(\hat Q_0)(T(t)) = \frac{1}{f_0} HG[\mathcal{I}(\hat P_0)(t) = HG[\mathcal{I}(\nu(\hat P_0))(t)
\]

\noindent in the nonequivariant limit.  By Theorem \ref{oneptmirrorthrm}, the extension of $\nu(\hat P_0)$ to $(0,1)$-Euler data $\nu(\hat P)$ given by Theorem \ref{maintheorem} and the transformed $(0,1)$-Euler data $\mu(\hat Q)$ agree (here $\mu$ now refers to the extension given in Lemma \ref{trans1} of the mirror transformation of \cite{lly:mirror1} to all heights).  In particular, again working in the nonequivariant limit,

\begin{align*}
HG[\mathcal{I}(\mu(\hat Q_{n-3}))(t) &= HG[\mathcal{I}(\hat Q_{n-3})(T)\\
&= \Omega p^{n-3} \left[1 - \frac{1}{n+1} \Phi'(T) \frac{p}{\hbar} + \frac{1}{n+1}(T\Phi'(T) - \Phi(T)) \frac{p^2}{\hbar^2}\right]
\end{align*}

\noindent agrees with

\[
HG[\mathcal{I}(\nu(\hat P_{n-3})) = \Omega p^{n-3}\left[1- y_{n-3,1}\frac{p}{\hbar} + y_{n-3,2}\frac{p^2}{\hbar^2}\right].
\]

\noindent The above expression for $HG[\mathcal{I}(\hat Q_{n-3})(T)$ is a direct consequence of Lemma \ref{caseslemma}.  The theorem then follows easily by equating terms in these expansions.\qed

\end{proof}

\subsection{One-Point Closed Formula for $rk(V^-) \geq 2$}

Let $V = V^+ \oplus V^-$ be a split concavex bundle on $\pro^n$ such $\sum l_i + \sum k_i = n+1$, and the rank of the concave part is at least two:

\[
V = \bigoplus_{l_i > 0} \mathcal{O}(l_i) \bigoplus_{k_i > 0} \mathcal{O}(-k_i),
\]

\noindent with at least two $k_i$ factors.  When 

\begin{equation}
k = n - 2 - rk(V^+) + rk(V^-) > 0
\label{krank}
\end{equation}

\noindent the integrand in

\begin{equation}
K_d(H^k) = \int_{\overline{M_{0,1}}(\pro^n,d)} \text{Euler}(V_d) \text{ev}_1^*H^k
\label{nomirror}
\end{equation}

\noindent has the correct dimension.

\begin{lemma}
The one-point Gromov-Witten invariants $K_d(H^k)$ in \ref{nomirror} with $k$ as in \ref{krank} and rk$(V^-) = 2$ are given by

\[
K_d(H^k) =(-1)^{d(\sum k_i)} \frac{\prod l_i \prod_{l_i}(l_id)! \prod_{k_i}(k_id -1)! }{(d!)^{n+1}}
\]

\noindent and are $0$ if rk$(V^-) > 2$.

\label{onepointconvex}

\end{lemma}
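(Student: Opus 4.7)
The plan is to combine Mirror Theorem I with the recovery lemma (Lemma \ref{caseslemma}) applied to an explicit linear $(0,0)$-Euler data for $V$. First I would take
\[
\hat P_{0,d} \;=\; \prod_{l_i>0}\prod_{m=0}^{l_id}(l_i\kappa-m\hbar) \;\cdot\; \prod_{k_i>0}\prod_{m=1}^{k_id-1}(m\hbar-k_i\kappa)
\]
as the candidate linked $(0,0)$-Euler data. A direct degree count in $(p,\hbar)$ shows that the coefficient of $e^{dt}$ in $HG[\mathcal{I}(\hat P_0)]$ in the non-equivariant limit scales as $(p/\hbar)^{\mathrm{rk}(V^-)}$, so when $\mathrm{rk}(V^-)\geq 2$ the $\hbar^{-1}$-coefficient of $HG[\mathcal{I}(\hat P_0)]$ receives only the term $-\Omega p t/\hbar$ coming from the $\Omega\, e^{-pt/\hbar}$ prefactor. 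This matches the known $\hbar^{-1}$-coefficient of $HG[\mathcal{I}(\hat Q_0)]$, so $\hat P_0$ is linked to $\hat Q_0$ without any Lian--Liu--Yau mirror transformation of Lemmas \ref{trans1}--\ref{trans2}; by Theorem \ref{oneptmirrorthrm}, the resulting extension $\hat P$ via Theorem \ref{maintheorem} satisfies $\hat P = \hat Q$.

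Next I would compute $I_d^*\hat P_{0,d}$ in the non-equivariant limit by extracting the factor $p^{\mathrm{rk}(V^+)}$ from the $m=0$ terms of the convex factors and then setting $p=0$ in the remaining numerator and denominator. Using $\sum l_i + \sum k_i = n+1$ to combine sign factors, this yields the leading asymptotic
\[
\left.\frac{I_d^*\hat P_{0,d}}{\bigl(\prod_{m=1}^{d}(p-m\hbar)\bigr)^{n+1}}\right|_{\lambda=0}^{\text{leading in }p/\hbar} \;=\; \prod l_i\cdot p^{\mathrm{rk}(V^+)}\,\hbar^{-\mathrm{rk}(V^-)}\, c_d,
\]
where $c_d = (-1)^{d\sum k_i}\prod(l_id)!\prod(k_id-1)!/(d!)^{n+1}$.

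Finally I would invoke Lemma \ref{caseslemma} with $v=1$, $s=1$:
\[
-\hbar^{-1}d\, K_d(H^k) \;=\; [\text{coeff of }e^{dt}]\lim_{\lambda\to 0}\int_{(\pro^n)_T}p\, e^{-pt/\hbar}\,\frac{I_d^*\hat Q_{d,k}}{\prod_l\prod_m(p-\lambda_l-m\hbar)}.
\]
In the non-equivariant limit, the mirror transformation $\eta$ of Lemma \ref{trans3} in Theorem \ref{maintheorem}'s extension is trivial, since its coefficients $-(y^r_{k,1})'(t)/(y^0_{k,1})'(t)$ vanish at $\lambda=0$ for $r\geq 1$; only the $\rho$ and $\nu$ parts remain. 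Combining the successive $\nu$-rescalings with the binomial expansion of $(p-d\hbar)^k$ coming from $\rho^k$, then using the critical-dimension relation $n = k + \mathrm{rk}(V^+) - \mathrm{rk}(V^-) + 2$ together with $\int_{\pro^n}p^n = 1$, one extracts the $\hbar^{-1}$- and $t$-independent part of the integrand. When $\mathrm{rk}(V^-)=2$ this gives $K_d(H^k) = \prod l_i\cdot c_d$; when $\mathrm{rk}(V^-)>2$ the total power of $\hbar$ in the integrand is too negative for any $\hbar^{-1}$-contribution at critical dimension, yielding $K_d(H^k) = 0$.

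The main obstacle will be precisely tracking the iterated $\rho$-$\nu$ operations in the non-equivariant limit: the $\nu$-rescalings are $t$-dependent factors $1/(y^0_{k,1})'(t)$ themselves determined by the hypergeometric data, so one needs an inductive argument showing that the recursive structure of $y^0_{k,q}(t)$ conspires with the $\rho^k$-binomial expansion and the $\int_{\pro^n}p^n=1$ projection to yield the stated closed form. This bookkeeping, essentially a combinatorial identity hidden inside Theorem \ref{maintheorem}, is the technical heart of the argument.
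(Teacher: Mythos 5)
Your opening step (taking the explicit product data $\hat P_0$ and concluding $\hat P_0 = \hat Q_0$ because $\mathrm{rk}(V^-)\geq 2$ forces agreement mod $\hbar^{-2}$) is the same as the paper's, although your logic there is slightly inverted: matching $\hbar^{-1}$-coefficients does not \emph{produce} linking --- linking is the separate geometric input from \cite{lly:mirror1} (agreement on multiple covers, i.e.\ at $\hbar = (\lambda_i-\lambda_j)/d$), and it is linking \emph{together with} the mod-$\hbar^{-2}$ agreement that lets Lemma \ref{llyuniqueness} give equality. The genuine gap is in the second half. You route the recovery of $K_d(H^k)$ through the $(0,1)$-Euler data, Mirror Theorem \ref{oneptmirrorthrm}, and the height-$k$ extension of Theorem \ref{maintheorem}, which forces you to unwind the iterated $\rho$--$\nu$--$\eta$ construction (the recursive $y^0_{k,q}$, the $t$-dependent rescalings by $1/{y^0_{k,1}}'(t)$, the binomial expansion of $(p-d\hbar)^k$) in the non-equivariant limit. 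You explicitly defer this as ``a combinatorial identity hidden inside Theorem \ref{maintheorem}''; but that identity \emph{is} the content of the closed formula, so as written the proposal does not prove the stated value of $K_d(H^k)$, nor does it give a real argument for the vanishing when $\mathrm{rk}(V^-)>2$ (the claim that ``the total power of $\hbar$ is too negative'' is not justified once the mirror transformations have mixed heights and powers of $\hbar$).

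The paper avoids this machinery entirely by working with the $(1,0)$-Euler data induced by $V$ instead of the $(0,1)$-data. For $(1,0)$-data the marked point is the one sent to $[0,1]$, and Lemma \ref{recovery} (equivalently equation (\ref{qhat}) in the proof of Lemma \ref{showeulerdata}) gives the identity $I_d^*(\hat Q_{d,k}) = p^k\, I_d^*(\hat Q_{d,0})$ on the nose --- no extension theorem, no $\rho$, $\nu$, $\eta$, and no tracking of the $y$-recursion is needed. Substituting the explicit $\hat P_0 = \hat Q_0$ and applying the recovery Lemma \ref{caseslemma2} with $s=0$ (which reads $\hbar^{-2}K_d(H^k)$ on the left), the invariant is extracted from
\[
\lim_{\lambda\rightarrow 0}\int_{(\pro^n)_T} e^{-pt/\hbar}\,
\frac{p^k \prod_{l_i}\prod_{m=0}^{l_i d}(l_i p - m\hbar)\prod_{k_i}\prod_{m=1}^{k_i d-1}(-k_i p + m\hbar)}{\prod_{l=0}^n\prod_{m=1}^d (p-\lambda_l - m\hbar)}
\]
by a simple degree count: when $\mathrm{rk}(V^-)=2$ exactly one term has the right degree, yielding the closed formula, and when $\mathrm{rk}(V^-)>2$ no term does, yielding $0$. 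If you want to salvage your $(0,1)$-route you must actually carry out the bookkeeping you postponed; the much shorter path is to switch to the $(1,0)$-data, where the height dependence is trivial.
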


As a special case, when $V = \mathcal{O}(-1) \oplus \mathcal{O}(-1) \rightarrow \pro^1$, we recover the multiple covering formula for one marked point mentioned in the introduction:

\[
K_{d,1} = \frac{1}{d^2}
\]

\noindent Although this contribution follows easily from the original Aspinwall-Morrison formula and the Divisor Equation \cite{mirrorsymmetry}, the methods here give another simple proof.  Other specific cases agree with calculations appearing elsewhere, for instance $V = \mathcal{O}(-1) \oplus \mathcal{O}(-2) \rightarrow \pro^2$ considered in Section 3.2 of \cite{klemmpand:4folds} (where the authors' ultimate interest was genus 1 invariants).

We now prove Lemma \ref{onepointconvex}:

\begin{proof}
%

As before, let $\Omega = \frac{e(V^+)}{e(V^-)}$.  If $\hat Q_0$ is the $(0,0,p,\Omega)$-Euler data induced by $V$ and $\hat P_0$ is the $(0,0,p,\Omega)$-Euler data given by

\[
\hat P_d = \prod_{l_i} \prod_{m=0}^{l_id}(l_i\kappa - m\hbar) \prod_{k_i} \prod_{m=1}^{k_id-1}(-k_i\kappa + m\hbar)
\]

\noindent then by the results of \cite{lly:mirror1}, $\hat Q_0$ and $\hat P_0$ are linked and, because $rk(V^-) \geq 2$, immediately satisfy

\[
HG[\mathcal{I}(Q_0)](t) \equiv HG[\mathcal{I}(P_0)](t) \mod \hbar^{-2},
\]

\noindent from which one concludes that $\hat Q_0$ = $\hat P_0$ (Lemma \ref{llyuniqueness}).

Now let $\hat Q$ be the $(1,0,p,\Omega)$-Euler data induced by $V$ (Lemma \ref{showeulerdata}).  Notice that by Lemma \ref{recovery},

\begin{align*}
I_d^*(\hat Q_k) &= p^k I_d^*(\hat Q_0).
\end{align*}

\noindent By inserting this into Lemma \ref{caseslemma2} in the case of $(1,0)$-Euler data, we find

\begin{align*}
\hbar^{-2} K_d(H^n) &= \lim_{\lambda_i \rightarrow 0} \int_{(\pro^n)_T} e^{-pt/{\hbar}} \frac{I_d^*(\hat Q_{d,k})}{\prod_{l=0}^n \prod_{m=1}^d (p - \lambda_l - m\hbar)}\\
 &= \lim_{\lambda_i \rightarrow 0} \int_{(\pro^n)_T} e^{-t/{\hbar}} \frac{p^k I_d^*(\hat Q_{d,0})}{\prod_{l=0}^n \prod_{m=1}^d (p - \lambda_l - m\hbar)}\\
&=\lim_{\lambda \rightarrow 0} \int_{(\pro^n)_T} e^{-pt/\hbar} \frac{p^k \prod_{l_i} \prod_{m=0}^{l_id}(l_ip - m\hbar) \prod_{k_i} \prod_{m=1}^{k_id-1}(-k_ip + m\hbar)}{\prod_{l=0}^n \prod_{m=1}^d (p - \lambda_l - m\hbar)}\\
&= (-1)^{d(\sum k_i)} \frac{\prod l_i \prod_{l_i}(l_id)! \prod_{k_i}(k_id -1)! }{\hbar^2(d!)^{n+1}}
\end{align*}

\noindent where the last line follows by picking off the term of proper degree when rk$(V^-)=2$.  If rk$(V^-)>2$ then the contribution is $0$. \qed

\end{proof}

\subsection{Generalized Multiple Covering Formula}

Let $V = \mathcal{O}(-1)^{n+1}$ on $\pro^n$, which induces on $\overline{M_{0,2}}(\pro^n,d)$ a sequence of obstruction bundles $V_d$ of dimension $(n+1)d - n - 1$.  Since $\overline{M_{0,2}}(\pro^n,d)$ has dimension $(n+1)d + n - 1$, the only two-point invariants for dimensional reasons are 

\[
K_d(H^n,H^n) = \int_{\overline{M_{0,2}}(\pro^n,d)} \text{Euler}(V_d)ev_1^*H^n ev_2^*H^n.
\]

\begin{lemma}
\[
K_d(H^n,H^n) = (-1)^{(n+1)(d-1)} \frac{1}{d}
\]
\end{lemma}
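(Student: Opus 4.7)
The plan is to reduce $K_d(H^n, H^n)$ to a localization integral on $(\pro^n)_T$ via Lemma \ref{caseslemma2}, invoke the two mirror theorems to rewrite the integrand in closed linear form, and evaluate directly.  First I would apply Lemma \ref{caseslemma2} to the $(1,1)$-Euler data $\hat Q$ induced by $V = \mathcal{O}(-1)^{n+1}$ with $v=1$, $\vec{k}=(n,n)$, $k_{v+1}^* = k_{v+1} = n$, and $s=0$ (so that $a=0$ and $\vec{k}^* = \vec{k}$), giving
\[
K_d(H^n, H^n) \;=\; -\hbar \cdot \lim_{\lambda \rightarrow 0} \int_{(\pro^n)_T} e^{-pt/\hbar}\, \frac{I_d^*(\hat Q_{d,(n,n)})}{\prod_{l=0}^n \prod_{m=1}^d (p - \lambda_l - m\hbar)}.
\]
Theorem \ref{twoptmirrorthrm} then replaces the numerator by $p^n I_d^*(\hat Q_{d,n})$, where $\hat Q_{d,n}$ now denotes the $(0,1)$-Euler data from $V$ at height $n$; Theorem \ref{oneptmirrorthrm} identifies this with the extension $\hat P$ of $\hat P_0 = \prod_{m=1}^{d-1}(m\hbar - \kappa)^{n+1}$ produced by Theorem \ref{maintheorem} --- the explicit linear data from the proof of Lemma \ref{onepointconvex}, which agrees with $\hat Q_0$ as $(0,0)$-Euler data because $\mathrm{rk}(V^-) = n+1 \geq 2$.

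The central technical step, and the one I expect to be the main obstacle, is showing that in the non-equivariant limit the mirror transformations $\nu$ and $\eta$ in the Theorem \ref{maintheorem} construction act as the identity, so the extension collapses to iterated application of the operator $\rho$ of Lemma \ref{extensionlemma}.  To verify this I would expand $HG[\mathcal{I}(\hat P_0)](t)|_{\lambda \rightarrow 0}$ directly and read off $y^0_{0,q}(t) = t^q/q!$ with $y^r_{0,q}|_{\lambda \rightarrow 0} = 0$ for $r > 0$, then propagate these identities through the recursion $y^0_{k,q}(t) = (y^0_{k-1,q+1})'(t)/(y^0_{k-1,1})'(t)$ to every height.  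Since $\nu$ is built from $f = \hbar \ln(y^0_{k,1}'(t))$ and $\eta$ from the ratios $-y^r_{k,1}'(t)/y^0_{k,1}'(t)$, both act trivially at $\lambda = 0$, yielding
\[
HG[\mathcal{I}(\hat Q_{n})](t)\big|_{\lambda \rightarrow 0} \;=\; \{-\hbar \partial_t\}^n HG[\mathcal{I}(\hat P_0)](t)\big|_{\lambda \rightarrow 0}.
\]

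Granting that, the remainder is mechanical.  Starting from
\[
HG[\mathcal{I}(\hat P_0)](t)\big|_{\lambda \rightarrow 0} \;=\; e^{-pt/\hbar}\left[\Omega + \sum_{d \geq 1} \frac{(-1)^{(n+1)(d-1)}\, e^{dt}}{(p - d\hbar)^{n+1}}\right],
\]
I would apply $\{-\hbar \partial_t\}(e^{-pt/\hbar}F) = e^{-pt/\hbar}(p - \hbar \partial_t)F$ together with the telescoping identity $(p - \hbar \partial_t)[e^{dt}/(p - d\hbar)^{j+1}] = e^{dt}/(p - d\hbar)^j$ a total of $n$ times to extract
\[
\frac{I_d^*(\hat Q_{d,(n,n)})}{\prod_{l,m}(p - \lambda_l - m\hbar)}\bigg|_{\lambda \rightarrow 0} \;=\; \frac{(-1)^{(n+1)(d-1)}\, p^n}{p - d\hbar}.
\]
Finally, nilpotency $p^{n+1} = 0$ on $\pro^n$ reduces $p^n/(p-d\hbar)$ to $-p^n/(d\hbar)$, and $\int_{\pro^n} e^{-pt/\hbar} p^n = \int_{\pro^n} p^n = 1$, so multiplying by $-\hbar$ yields $K_d(H^n, H^n) = (-1)^{(n+1)(d-1)}/d$.
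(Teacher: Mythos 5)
Your proposal reaches the correct answer and shares the paper's outer skeleton (identify $\hat Q_0 = \hat P_0$ via linkedness and Lemma \ref{llyuniqueness}, identify the $(0,1)$-data with an explicit hypergeometric extension of $\hat P_0$, glue via Theorem \ref{twoptmirrorthrm}, and extract $K_d(H^n,H^n)$ with Lemma \ref{caseslemma2}), but the middle identification is handled differently. You route through Theorem \ref{oneptmirrorthrm}, which forces you to show that the mirror transformations $\nu$ and $\eta$ in the Theorem \ref{maintheorem} construction degenerate, hence the analysis of the $y^r_{k,q}$. The paper short-circuits this: because $\Omega = (-p)^{-n-1}$, the naive extension of Lemma \ref{extensionlemma}, $I_d^*\hat P_{d,k} = (p-d\hbar)^k I_d^*\hat P_{d,0}$, already has $e^{dt}$-coefficients of $\hbar$-degree $(n+1)(d-1)+k \leq (n+1)d-1$, i.e.\ $HG[\mathcal{I}(\hat P_n)](t) \equiv \frac{(-1)^{n+1}}{p} \bmod \hbar^{-1}$, so the Uniqueness Lemma \ref{uniquenesslemma} applies directly to compare it with the $(0,1)$-data of Lemma \ref{showeulerdata}; no mirror transformation enters at all. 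Your route is valid but heavier, and one intermediate claim is overstated: $y^0_{0,q}(t) = t^q/q!$ holds only for $q \leq n$, since at $q = n+1$ the degree-$d\geq 1$ coefficients, after multiplication by $\Omega^{-1} = (-p)^{n+1}$, contribute $r=0$ corrections of order $\hbar^{-(n+1)}$ proportional to $e^{dt}$. This is harmless here: the recursion producing $y^0_{k,1}$ for $k \leq n-1$ only consumes $y^0_{0,q}$ with $q \leq n$, so $\nu$ is indeed trivial up to height $n$, the $\eta$-corrections (whose prefactors have positive degree in $\lambda$) vanish in the nonequivariant limit, and your telescoping evaluation coincides with the paper's final computation.
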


\noindent This agrees with the calculation made in \cite{yplee:flops}.

\begin{proof}
The $(0,0,(-p)^{-n-1})$-Euler data $\hat P_0 = \prod_{m=1}^{d-1}(-p+m\hbar)^{n+1}$ is immediately equal to the $(0,0,(-p)^{-n-1})$-Euler data $\hat Q_0$ induced by $V$ as they are linked and their hypergeometric series agree mod $\hbar^{-2}$ \cite{lly:mirror1}.  The simple extension of $\hat P_0$ to $(0,1)$-Euler data of Lemma \ref{extensionlemma} given by $I_d^*\hat P_{d,k} = (p-d\hbar)^kI_d^*\hat P_{d,0}$ already has the property that 

\[
HG[\mathcal{I}(\hat P_n)](t) \equiv \frac{(-1)^{n+1}}{p} \mod \hbar^{-1}.
\]

On the other hand, the $(0,1)$-Euler data $\hat Q$ induced by $V$ satisfies

\[
HG[\mathcal{I}(\hat Q_n)](t) \equiv \frac{(-1)^{n+1}}{p} \mod \hbar^{-1}
\]

\noindent as well by Lemma \ref{recovery}.  These two $(0,1)$-Euler data then agree by Lemma \ref{uniquenesslemma}.  Theorem \ref{twoptmirrorthrm} then yields that the $(1,1)$-Euler data $\hat Q$ induced by $V$ satisfies $HG[\mathcal{I}(\hat Q_{n,n})](t) = p^n HG[\mathcal{I}(\hat P_n)](t) = p^n HG[\mathcal{I}(\hat P_n)](t)$.  One may then calculate

\begin{align*}
-\hbar^{-1} K_d(H^n,H^n) &= \lim_{\lambda_i \rightarrow 0} \int_{(\pro^n)_T} e^{-pt/{\hbar}} \frac{I_d^*(\hat Q_{d,n,n})}{\prod_{l=0}^n \prod_{m=1}^d (p - \lambda_l - m\hbar)}\\
&=\lim_{\lambda \rightarrow 0} \int_{(\pro^n)_T} e^{-pt/\hbar} p^n \frac{(p-d\hbar)^n \prod_{m=1}^{d-1}(-p+m\hbar)^{n+1}}{\prod_{l=0}^n \prod_{m=1}^d (p - \lambda_l - m\hbar)}\\
&= (-1)^{(n+1)(d-1)+1} \frac{1}{d\hbar}
\end{align*}

\noindent per Lemma \ref{caseslemma2}, from which the lemma follows. \qed

\end{proof}

\subsection{Example of General Case}

We now give an example typical of the general case computable by the methods in this paper.  Consider the bundle $V = \mathcal{O}(3) \oplus \mathcal{O}(-3) \rightarrow \pro^5$, which defines a sequence of bundles $V_d$ on $\overline{M_{0,1}}(\pro^5,d)$ (or $\overline{M_{0,2}}(\pro^5,d)$) of rank $6d$.  Since $\overline{M_{0,1}}(\pro^5,d)$ has dimension $6d + 3$ and $\overline{M_{0,1}}(\pro^5,d)$ has dimension $6d+4$, we should calculate 

\[
K_d(\tau_i(H^{3-i})) = \int_{\overline{M_{0,1}}(\pro^5,d)} \text{Euler}(V_d) ev_1^*H^{3-i} \psi_1^i.
\]

\noindent for $0 \leq i \leq 3$ and

\[
K_d(H^2,\tau_i(H^{2-i})) = \int_{\overline{M_{0,2}}(\pro^5,d)} \text{Euler}(V_d) ev_1^*H^{2} ev_2^*H^{2-i} \psi_2^i.
\]

\noindent for $0 \leq i \leq 2$.  The other invariants computable by the methods in this paper follow quickly from the divisor property or symmetry and are not included.

Let $\hat Q = \{\hat Q_{d,k}\}$ be the $(0,1,p,-1)$-Euler data induced by $V$ as in Lemma \ref{showeulerdata}.  Take $g(t) = \sum_{d \geq 1} \frac{(-1)^d(3d)!^2}{d(d!)^6}e^{dt} \in e^t\mathcal{R}_T[[e^t]]$; by Lemma \ref{trans1} there exists a mirror transformation $\mu$ so that for every height $k$,

\begin{equation}
HG[\mathcal{I}(\mu(\hat Q)_k)](t) = HG[\mathcal{I}(\hat Q_k)](t + g(t)).
\label{qside}
\end{equation}

\noindent In particular, by using Lemma \ref{recovery}, at height $k=0$ (corresponding to the $(0,0)$-Euler data induced by $V$) the associated hypergeometric data has the expansion 

\begin{align*}
HG[\mathcal{I}(\mu(\hat Q)_0)](t) &= HG[\mathcal{I}(\hat Q_0)](t + g(t))\\
  &= -1 \left[1 - \frac{p}{\hbar}(t + g(t))\right] + O(\hbar^{-2}).
\end{align*}
						
Now consider the $(0,0)$-Euler data $\hat P_0$ defined by

\[
\hat P_{d,0} = \prod_{m=0}^{3d}(3\kappa - m\hbar) \prod_{m=1}^{3d-1}(-3\kappa + m\hbar),
\]

\noindent which is linked to $\hat Q_0$ (and hence $\mu(Q)_0$) by \cite{lly:mirror1}.  The hypergeometric data for $\hat P_0$ has series expansion of the form

\begin{align*}
HG[\mathcal{I}(\hat P_0)](t) &= \Omega \cdot \sum_{q=0}^{\infty} \sum_{r=0}^q (-1)^q y^r_{0,q}(t) p^{q-r} \hbar^{-q}\\
&= -1 \left[ 1 - \hbar^{-1}(p \cdot y_{0,1}^0(t) + y_{0,1}^1(t)) \right] + O(\hbar^{-2}) 
\end{align*}

Let $f(t) = y_{0,1}^1(t) \in e^t\mathcal{R}_T[[e^t]]$.  By Lemma \ref{trans2}, there exists a mirror transformation $\nu$ so that 

\begin{align*}
HG[\mathcal{I}(\nu(\hat P_0))](t) &= e^{f/\hbar} HG[\mathcal{I}(\hat P_0)](t)\\
							&= -1 \left[ 1 - \frac{p}{\hbar} \cdot y_{0,1}^0(t)\right] + O(\hbar^{-2}) .
\end{align*}

\noindent One may explicitly check that $y_{0,1}^0 = g(t) + t$, so that

\[
HG[\mathcal{I}(\mu(\hat Q)_0)](t + g(t)) \equiv HG[\mathcal{I}(\nu(\hat P_0))](t) \mod \hbar^{-2},
\]

\noindent which, by Lemma \ref{llyuniqueness}, implies that $\mu(\hat Q)_0 = \nu(\hat P_0)$ as $(0,0)$-Euler data.

By Theorem \ref{oneptmirrorthrm}, the extension of $\hat P_0$ of Theorem \ref{maintheorem} and the transformed $(0,1)$-Euler data $\mu(\hat Q)$ induced by $V$ agree.  For instance, this implies

\[
HG[\mathcal{I}(\hat Q_3)](t + g(t)) =\frac{-\hbar}{y_{3,3}^0} \frac{\partial}{\partial t} \left[ \frac{-\hbar}{y_{2,2}^0} \frac{\partial}{\partial t} \left[ \frac{-\hbar}{y_{1,1}^0} \frac{\partial}{\partial t} HG[\mathcal{I}(\hat P_0)](t)\right] \right].
\]

\noindent from which one can calculate the one-point Gromov-Witten invariants $K_d(H^3)$.

More generally, Theorem \ref{twoptmirrorthrm} along with the recovery lemmas \ref{caseslemma} and \ref{caseslemma2} may be used to recover one and two-point Gromov-Witten with descendents, which are calculated by computer and given in the accompanying figures.

\begin{center}
\begin{figure}
\centering
\begin{tabular}[t]{|l|c|c|}
\hline
$d$ & $K_d(H^3)$ & $\eta_d(H^3)$\\\hline
1 & $144$ & 144\\
2 & $-15228$ &  -15264\\
3 & $3387832$ & 3387816\\
4 & $-1033328799$ & -1033324992\\
5 & $\frac{9395106912144}{25}$ & 375804276480\\
6 & $152957189840958$ & -152957190686220\\
7 & $\frac{3299075934458784120}{49}$ & 67328080295077224\\
8 & $\frac{-125586661840964581023}{4}$ & -31396665459982813056\\
9 & $\frac{137738185029530693381824}{9}$ & 15304242781058965554888\\
10 & $\frac{-193162880799109330140903228}{25}$ & -7726515231964467156704640\\ \hline
\end{tabular}
\caption{One-point Gromov-Witten invariants}
\end{figure}
\end{center}

\begin{center}
\begin{figure}
\centering
\begin{tabular}[t]{|l|c|c|c|}
\hline
$d$ & $K_d(\tau_1(H^{2}))$ & $K_d(\tau_2(H))$ & $K_d(\tau_3(1))$\\\hline
1 & $27$ & 207 &  -414\\
2 & $\frac{136485}{8}$ &  $\frac{-339471}{16}$ &  $\frac{38799}{16}$\\
3 & $-4712813$ & $\frac{51696073}{18}$ &  $\frac{137491061}{108}$\\
4 & $\frac{100423232037}{64}$ & $\frac{-150374595087}{256}$ &  $\frac{-268540355185}{512}$\\
5 & $\frac{-74841919774848}{125}$ & $\frac{1483860161171187}{10000}$ &  $\frac{41652053617157379}{200000}$\\
6 & $\frac{50249578265872917}{200}$ & $\frac{-168846749581461909}{4000}$ &  $\frac{-21002329435853044853}{240000}$\\
7 & $\frac{-968660782674268810158}{8575}$ & $\frac{30592323981030547843299}{2401000}$ &  $\frac{13092926237088181035302337}{336140000}$\\
8 & $\frac{33559516570446549225317133}{627200}$ & $\frac{-678590997243490327156420149}{175616000}$ &  $\frac{-447547582585761567954290766657}{24586240000}$\\
9 & $\frac{-873077405632389938867605433}{33075}$ & $\frac{4338715330778600822100647627393}{4000752000}$ &  $\frac{89279689020110822840228699723304167}{10081895040000}$\\
10 & $\frac{2354511971663254407450821413}{175}$ & $\frac{-5193643584353587057315746796417}{23520000}$ &  $\frac{-791809569634596848121327607679163149}{177811200000}$\\ \hline
\end{tabular}
\caption{One-point Gromov-Witten invariants with descendents}
\end{figure}
\end{center}

\begin{center}
\begin{figure}
\centering
\begin{tabular}[t]{|l|c|c|}
\hline
$d$ & $K_d(H^2, H^2)$ & $\eta_d(H^2,H^2)$ \\\hline
1 & $261$ & 117 \\
2 & $\frac{-141669}{2}$ &  $-70965$\\
3 & $28141053$ & $28140966$ \\
4 & $\frac{-52415855109}{4}$ & $-13103928360$ \\
5 & $\frac{33295202406036}{5}$ & $6659040481155$ \\
6 & $\frac{-7152693165982701}{2}$ & $-3576346597038222$ \\
7 & $\frac{13967902006221361284}{7}$ & $1995414572317337289$ \\
8 & $\frac{-9158575282812536703813}{8}$ & $-1144821910345015106088$\\
9 & $670934720575777355387210$ & $670934720575777346006859$ \\
10 & $\frac{-1999257192955989705434315922}{5}$ & $-399851438591201270607089595$\\ \hline
\end{tabular}
\caption{Two-point Gromov-Witten invariants}
\end{figure}
\end{center}

\begin{center}
\begin{figure}
\centering
\begin{tabular}[t]{|l|c|c|}
\hline
$d$ & $K_d(H^2,\tau_1(H))$ & $K_d(H^2,\tau_2(1))$\\\hline
1  & -117 &  -27\\
2  &  $\frac{-31995}{4}$ &  $\frac{239355}{8}$\\
3 & $6726101$ &  $-10610038$\\
4 & $\frac{-62339468859}{16}$ &  $\frac{292304178171}{64}$\\
5 & $\frac{55140229054008}{25}$ &  $\frac{-1097087319721233}{500}$\\
6 & $\frac{-25282225051035849}{20}$ &  $\frac{113016673920662199}{100}$\\
7 & $\frac{180503575429845806631}{245}$ &  $\frac{-10471104834998308828011}{17150}$\\
8 & $\frac{-976993199472105955290897}{2240}$ &  $\frac{42829575696798982686013239}{125440}$\\
9 & $\frac{82446713274250766320045042}{315}$ &  $\frac{-14815808389405172763875125873}{75600}$\\
10 & $\frac{-5561428230594376375028839092}{35}$ &  $\frac{4499962304088589402305334576101}{39200}$\\ \hline
\end{tabular}
\caption{Two-point Gromov-Witten invariants with descendents}
\end{figure}
\end{center}

\newpage
\bibliographystyle{amsalpha}
\bibliography{refs}

\end{document}